\documentclass[12pt]{amsart}
\usepackage{amsmath,amssymb,txfonts}
\usepackage{amssymb}
\usepackage{amsxtra}
\usepackage{amsthm, color}
\usepackage{txfonts}
\usepackage{graphicx}
\usepackage{times}

\usepackage{hyperref}
\usepackage[notref, notcite]{showkeys}
\usepackage{citeref}
\allowdisplaybreaks
\usepackage{color}
\usepackage{xcolor}
\usepackage{pgf,tikz,pgfplots}
\usepackage{mathrsfs}
\usetikzlibrary{arrows}

\hypersetup{
        colorlinks   = true,
        citecolor    = blue,
        linkcolor    = blue,
        urlcolor     = blue
}

\makeatletter
\@namedef{subjclassname@1991}{Subject}
\@namedef{subjclassname@2000}{Subject}
\@namedef{subjclassname@2010}{Subject}
\@namedef{subjclassname@2020}{{\rm 2020} Mathematics Subject Classification}
\makeatother

\def\rr{{\mathbb R}}
\def\rn{{{\rr}^n}}

\def\nn{{\mathbb N}}
\def\zz{{\mathbb Z}}

\def\BMO{{\mathop\mathrm{\,BMO\,}}}

\def\ls{\lesssim}
\def\gs{\gtrsim}

\def\dsum{\displaystyle\sum}

\def\r{\right}
\def\lf{\left}

\def\XXint#1#2#3{{\setbox0=\hbox{$#1{#2#3}{\int}$ }
\vcenter{\hbox{$#2#3$ }}\kern-.6\wd0}}

      \newcommand{\lt}[1]{[ {#1}] \lower.3ex\hbox{$_{t}$}}

\date{\today}

\newtheorem{Theorem}{Theorem}[section]
\newtheorem{theorem}{Theorem}[section]
\newtheorem{definition}[theorem]{Definition}

\newtheorem{remark}[theorem]{Remark}

\newtheorem{proposition}[theorem]{Proposition}
\newtheorem{lemma}[theorem]{Lemma}

\begin{document}
\title[Endpoint boundedness of Orlicz-BMO commutators]
{Endpoint boundedness of Orlicz-BMO commutators on Orlicz-Hardy type spaces}
	
\author{Zixing Zhuang}
\address{School of Mathematics and Statistics,
                       Fuzhou University,
                       Fuzhou, Fujian 350108, People's Republic of China}
\email{Zhuangzxmath@126.com}

\author{Chenglong Fang$^\ast$}
\address{School of Mathematics and Statistics,
                       Fuzhou University,
                       Fuzhou, Fujian 350108, People's Republic of China}
\email{clfang@fzu.edu.cn}

\thanks{Chenglong Fang is supported by the National Natural Science Foundation of China (\# 12501119) and the Fujian Provincial Natural Science Foundation Young Scientist Innovation Project (\# 2025J08035).}

\thanks{$^\ast$ Corresponding author}
	
\date{}
		
\subjclass[2020]{42B30; 42B35; 46E30; 47B47.}

%31C40 (1991-now) Fine potential theory; fine properties of sets and functions
%42B25  Maximal functions, Littlewood-Paley theory
%42B30 Hp-spaces
%42B35 (2000-now) Function spaces arising in harmonic analysis
% 26D10 Inequalities involving derivatives and differential and integral operators
%42B20;
%46E30  Spaces of measurable functions (L p-spaces, Orlicz spaces, K?the function spaces, Lorentz spaces, rearrangement invariant spaces, ideal spaces, etc.)
% 47B47  Commutators, derivations, elementary operators, etc
	
\keywords{Commutator; Orlicz-$\mathrm{BMO}$; Orlicz-Hardy;
Sublinear operator;  Boundedness.}

\begin{abstract}
Given a growth function $\varphi:[0,\infty)\rightarrow [0,\infty)$, it is established that the commutators generated by sublinear operators and Orlicz-$\mathrm{BMO}$ function $b$ are bounded from $H_{b}^{\varphi}(\mathbb{R}^{n})$ to $L^{1}(\mathbb{R}^{n})$, and from $H^{\varphi}(\mathbb{R}^{n})$ to $L^{1,\,\infty}(\rn)$, where
$H_{b}^{\varphi}(\mathbb{R}^{n})$ is a specific subspace of Orlicz-Hardy space $H^{\varphi}(\mathbb{R}^{n})$ and sublinear operators include Lusin area integral, g-function, Marcinkiewicz integral and Bochner-Riesz mean operator.
Under the assumptions $T^*1=0$ and $T^*b=0$,
it is shown that the Orlicz-$\mathrm{BMO}$ commutator associated with the Bochner-Riesz mean operator admits endpoint boundedness from $H_{b}^{\varphi}(\mathbb{R}^{n})$ to $H^{1}(\mathbb{R}^{n})$.
However, the commutators corresponding to other operators discussed in this paper do not possess the aforementioned endpoint boundedness,
and a counterexample is provided to illustrate this point.
\end{abstract}

%第一句话：四个次线性算子的交换子的两个有界性，其中此线性算子包括，，，，
%第二句话：当B-R算子满足T1和Tb条件时，其交换子从$H_{b}^{\varphi}(\mathbb{R}^{n})$到$H^{1}(\mathbb{R}^{n})$有界
%第三句话，但是But, 其他三个算子的交换子$H_{b}^{\varphi}(\mathbb{R}^{n})$到$H^{1}(\mathbb{R}^{n})$有界性不成立，本文给出了反例。

\maketitle	

\allowdisplaybreaks

%\tableofcontents

\section{Introduction}\label{s1}

\subsection{Background and question}\label{s1.1}
Given a locally integrable function $b$ and a sublinear operator $\mathfrak{T}$, the commutator $[b, \mathfrak{T}]$ is defined by
$$[b, \mathfrak{T}]h:=\mathfrak{T}((b-b(\cdot))h),$$
where $h$ is a continuous function with compact support. When $\mathfrak{T}$ is a linear operator, this definition reduces to the classical pointwise form
$$[b, \mathfrak{T}]h=b\mathfrak{T}h-\mathfrak{T}(bh).$$
Within the framework where $b\in\mathrm{BMO}(\rn)$ and $\mathfrak{T}=T$ is a
$\delta$-Calder\'{o}n-Zygmund operator. Paluszy\'{n}ski \cite[p.4, Remark]{P-1995} pointed out that the commutator $[b,T]$ fails to be bounded from $H^1(\rn)$ to $L^1(\rn)$.
To overcome this difficulty, P\'{e}rez \cite{Peter-1995} found a Hardy-type subspace $\mathcal{H}^{1}_{b}(\rn)\subset H^1(\rn)$ and showed
\begin{align}\label{H1b-1}
[b, T]: \ \mathcal{H}^{1}_{b}(\rn)\longrightarrow L^1(\rn).
\end{align}
Even so, $\mathcal{H}^{1}_{b}(\rn)$ does not achieve  the optimal subspace of $H^1(\rn)$ in \eqref{H1b-1}.
Via the bilinear decomposition technique of $H^{1}(\rn)\times\mathrm{BMO}(\mathbb{R}^{n})$ in \cite{Bonami2012}, Ky \cite{Ky-2013} found the largest subspace of $H^1(\rn)$, denoted by $H_b^{1}(\rn)$, and proved that
\begin{align}\label{H1b-2}
[b, T]: \ H_b^{1}(\rn)\longrightarrow L^1(\rn),
\end{align}
where $H_b^{1}(\rn)$ is the largest subspace of $H^1(\rn)$ such that, for any $\mathcal{X}\subset H^1(\rn)$, if $\mathcal{X}$ replaces $H_b^{1}(\rn)$ in \eqref{H1b-2} and satisfies its boundedness condition,
then $\mathcal{X}\subset H_b^{1}(\rn)$.
Moreover, under the assumptions $T^*1=0$ and $T^*b=0$,
Ky \cite[Theorem~3.6]{Ky-2013} proved that
\begin{align}\label{H1b-3}
[b, T]: \ H_b^{1}(\rn)\longrightarrow H^1(\rn).
\end{align}
In 2018, Liu, Yang and Yuan \cite{Liu-Yang-Yuan-2018} extended the endpoint boundedness in
\eqref{H1b-2} and \eqref{H1b-3} to spaces of homogeneous type.
For further contributions on this topic, we refer the reader to
\cite{A. Bonami-2023, Bonami-2019, X. Fu-2017,  X. Fu-D. Yang-2017, Ky2014, Liu-Yang-Yuan-2018-2, Liu-Fu-2017}.

A work extended Ky's results within the Orlicz space framework.
By the optimal bilinear decomposition of products of Orlicz-Hardy and Orlicz-$\mathrm{BMO}$
in \cite{F-L2024}, the authors \cite{F-L2024} constructed Orlicz-Hardy space $H_b^{\varphi}(\rn)$ and proved that the commutator generated by Orlicz function $b$ and $\delta$-Calder\'{o}n-Zygmund operator is bounded from the specific Orlicz Hardy space $H_b^{\varphi}(\rn)$ to $L^1(\rn)$,
and that under the cancellation condition $T^{*}1=T^{*}b=0$,
this commutator is also bounded from $H_b^{\varphi}(\rn)$ to $H^1(\rn)$.
Recently, the results related to the commutator generated by fractional integral operator are presented in \cite{Z-F-C-2026}.  Specifically,
it proved that the commutator associated with the fractional integral operator and Orlicz function $b$ is bounded from $H_b^{\varphi}(\rn)$ to $L^{\frac{n}{n-\alpha}}(\rn)$, and maps $H^{\varphi}(\rn)$ into $L^{\frac{n}{n-\alpha},\,\infty}(\rn)$, where $\alpha\in(0, n)$ and $\varphi:[0,\infty)\rightarrow [0,\infty)$ is a growth function.

Inspired by these developments, a natural question is whether the assertions derived in \eqref{H1b-2} and \eqref{H1b-3} remain valid for the commutators associated with the well-known sublinear operators under the Orlicz space framework, where the sublinear operators include Lusin area integral, g-function, Marcinkiewicz integral
and Bochner-Riesz mean operator.
This will be systematically discussed in this paper.
In order to state the main results, we give the corresponding definitions below.

A non-decreasing function $\varphi: [0, \infty)\rightarrow [0, \infty)$ is called an Orlicz function if it satisfies $\varphi(0)=0$, $\lim_{\tau\to\infty}\varphi(\tau)=\infty$ and $\varphi(\tau)>0$ for any $\tau\in(0, \infty)$.

\begin{definition}\label{def-1.1}
Let $\varphi: [0, \infty)\rightarrow [0, \infty)$ be an Orlicz function.
\begin{enumerate}
\item[\rm (i)]We say $\varphi$ is of lower type $q_1$ ($q_1\geq 0$) if there exists a constant $C_{q_1}>1$ such that
$$\varphi(st)\leq C_{q_1}s^{q_1}\varphi(t) \quad \text{for all} \ t\geq0 \ \text{and} \ s\in(0,1],$$
and $\varphi$ is of upper type $q_2$ ($q_2\geq 0$) if there exists a constant $C_{q_2}>1$ such that
$$\varphi(st)\leq C_{q_2}s^{q_2}\varphi(t)\quad \text{for all} \ t\geq0 \ \text{and} \ s\in[1, \infty).$$
\item[\rm (ii)]The indices
\begin{align*}
 i(\varphi):=\sup\{q_1\geq 0:\ \varphi\,\text{is of lower type}\, q_1\}
\end{align*}
and
\begin{align*}
 I(\varphi):=\inf\{q_2\geq 0:\ \varphi\,\text{is of upper type}\, q_2\}
\end{align*}
represent the critical lower and upper type indices of $\varphi$, respectively.
\end{enumerate}
\end{definition}\label{def-1.1}

\begin{definition}\label{def-1.2}
We call $\varphi: [0, \infty)\rightarrow [0,\infty)$ a growth function if and only if $\varphi$ is of upper type $1$ and of lower type $q$ for some $q\in(0,1]$.
\end{definition}

\begin{definition}\label{def-1.3}
For an Orlicz function $\varphi$, a locally integrable function $h$ on $\rn$ is a member of $\mathrm{BMO}_{\varphi}(\mathbb{R}^n)$ precisely if
\begin{align*}
 \|h\|_{\mathrm{BMO}_{\varphi}(\mathbb{R}^n)}:= \sup_{B \subset \mathbb{R}^n}\frac{1}{\|\mathbf{1}_B\|_{L^{\varphi}(\mathbb{R}^n)}}\int_B |h(x)-h_B|\,dx <\infty,
\end{align*}
where $h_B=\frac{1}{|B|}\int_{B}h(x)\,dx $ and the supremum is taken over all balls $B$ in $
\rn$. If $\varphi(t)=t$, then $\mathrm{BMO}_{\varphi}(\mathbb{R}^n)=\mathrm{BMO}(\mathbb{R}^n)$.
For more study and results about $\mathrm{BMO}$ type spaces, see \cite{F-L2023, F-L JFA2024, FLZ25, WL22}.
\end{definition}

A locally integrable function $h$ on $\rn$ is said to lie in the Orlicz-Lebesgue space $L^\varphi(\rn)$ if
$$\|h\|_{L^\varphi(\rn)}:=\inf\left\{\lambda>0: \ \int_{\rn}\varphi(|h(x)|/\lambda)\,dx\leq 1\right\}$$
is finite. Write $\mathcal{S}(\mathbb{R}^n)$ for the Schwartz class---the space of smooth functions whose derivatives all decay faster than any polynomial at infinity, and $\mathcal{S}'(\mathbb{R}^n)$ represents the space of tempered distributions. The auxiliary space
is defined by
$$\mathcal{S}_{0}(\mathbb{R}^n):=\left\{\phi\in \mathcal{S}(\mathbb{R}^n) : \
\|\phi\|= \sup_{\substack{x \in \mathbb{R}^n, \\ |\alpha|\leq 1}} (1+|x|)^{(n+1)} \, |\partial_x^\alpha \phi(x)| \leq 1 \right\}.$$

\begin{definition}\label{def-1.3}
Let $\phi\in\mathcal{S}_{0}(\mathbb{R}^n)$ and $\varphi$ be an Orlicz function.
\begin{enumerate}
\item[\rm (i)] Denote $\phi_t(\cdot)= t^{-n} \phi(\cdot/t)$ ($t>0$). For any $h\in\mathcal{S}'(\mathbb{R}^n)$, the nontangential grand maximal function $\mathfrak{M}h$ is defined by
\begin{align*}
\mathfrak{M}(h)(x):= \sup_{\phi \in S_0(\mathbb{R}^n)} \; \sup_{\substack{y\in\mathbb{R}^n , \\ |y-x| < t}} |(h * \phi_t)(y)|  \quad \text{for all} \ x\in\mathbb{R}^n.
\end{align*}
\item[\rm (ii)] A tempered distribution $h$ belongs to the Orlicz-Hardy space $H^{\varphi}(\rn)$ precisely when its grand maximal function $\mathfrak{M}(h)$ is an element of $L^{\varphi}(\rn)$. The (quasi-)norm on $H^{\varphi}(\rn)$ is inherited from $L^{\varphi}(\rn)$ via
    $$\|h\|_{H^{\varphi}(\rn)}:=\|\mathfrak{M}h\|_{L^{\varphi}(\rn)}.$$
\end{enumerate}
\end{definition}

\begin{definition}\label{def-1.4}
Let $\varphi$ be a growth function satisfying $0<i(\varphi)\leq 1$. For a fixed $b\in \mathrm{BMO}_{\varphi}(\mathbb{R}^n)$, the space $H_b^{\varphi}(\rn)$ is defined as the family of functions $h\in H^{\varphi}(\rn)$ for which the commutator
$$[b, \mathfrak{M}]h:=\mathfrak{M}\left((b-b(\cdot))h(\cdot)\right)$$
lies in $L^1(\rn)$. The norm associated with $H_b^{\varphi}(\rn)$ is thereby defined as
\begin{align*}
\|h\|_{H_b^\varphi(\mathbb{R}^n)}:= \|h\|_{H^\varphi(\mathbb{R}^n)} \|b\|_{\mathrm{BMO}_\varphi(\mathbb{R}^n)} + \|[b,\mathfrak{M}]\|_{L^1(\mathbb{R}^n)}.
\end{align*}
\end{definition}

The subsequent three sections present the definitions of the corresponding operators and the main results for each operator.

\subsection{Lusin area integral and g-function}\label{s1.2}
Let $P(\cdot)=(1+|\cdot|^{2})^{-2}$ be the Poisson kernel and $u_h(\cdot,  t):= h*P_t(\cdot)$ ($t>0$) be the Possion integral of $h$. The Lusin area integral $\mathbf{S}$ is defined as
$$\mathbf{S}(h)(x):=\left(\int_{\Gamma(x)}|\nabla u_h(y,\,t)|^{2}t^{1-n}\,dy\,dt\right)^{1/2} \quad \text{for all} \ x\in\mathbb{R}^n,$$
where $\Gamma(x):=\left\{(y, t)\in\rn\times(0,\infty): \, |y-x|<t\right\}$. And the $g$-function is defined as
$$\mathcal{G}(h)(x):=\left(\int_{0}^{\infty}\lf|\lf(\nabla P_{t}*h\r)(x)\r|^{2}t\,dt\right)^{1/2} \quad \text{for all} \ x\in\mathbb{R}^n.$$
Stein \cite[pp. 450-458]{Stein-1958} discussed the boundedness of $\mathbf{S}$ and $\mathcal{G}$. In detail, it is proved that
$$\mathbf{S} \ \text{and} \ \mathcal{G}: \ L^{q}(\rn)\rightarrow L^{q}(\rn) \quad
\text{for all} \ q\in(1, \infty)$$
and
$$\mathbf{S} \ \text{and} \ \mathcal{G}: \ L^{1}(\rn)\rightarrow L^{1,\,\infty}(\rn).$$
In 1972, Fefferman and Stein \cite[Theorem~9, Corollary~1 and Corollary~3]{Stein-Fefferman-1} established that Lusin area integral $\mathbf{S}$ and $g$-function $\mathcal{G}$ characterize $H^1(\rn)$, this is, if $\mathfrak{T}\in\{\mathbf{S}, \, \mathcal{G}\}$, then
\begin{align}\label{S,g-1.2}
h\in H^1(\rn) \ \text{if and only if} \ \mathfrak{T}h\in L^1(\rn).
\end{align}

As will be shown in the subsequent theorem, the commutators associated with Lusin area integral and $g$-function still hold for \eqref{H1b-2} under the Orlicz space setting, while \eqref{H1b-3} is not effective, and a counterexample is given in Theorem \ref{the-ex-4.1}.

\begin{theorem}\label{main-Thm-1}
Let $\varphi$ be a growth function with
\begin{align*}
n/(n+1)<i(\varphi)\leq I(\varphi)<1
\end{align*}
and
\begin{align}\label{eq.1.5}
\begin{cases}
1/I(\varphi)>1/i(\varphi)-1 \quad  &\text{as}\ n=1;\\
1/I(\varphi)>(\lfloor n(1/i(\varphi)-1)\rfloor+n-1)/n \quad  &\text{as}\ n\geq 2.
\end{cases}
\end{align}
Assume that $b\in\mathrm{BMO}_{\varphi}(\rn)$
and $\mathfrak{T}\in\{\mathbf{S}, \, \mathcal{G}\}$.
Then the following conclusions hold.
\begin{enumerate}
\item[\rm (i)]If $h\in H_b^{\varphi}(\rn)$, then there exists a positive constant $C$  such that
$$\|[b, \mathfrak{T}]h\|_{L^{1}(\rn)}\leq C \|h\|_{H_b^{\varphi}(\rn)}.$$
\item[\rm (ii)]If $h\in H^{\varphi}(\rn)$, then there exists a positive constant $C$  such that
    $$\|[b, \mathfrak{T}]h\|_{L^{1,\,\infty}(\rn)}\leq C \|h\|_{H^{\varphi}(\rn)}.$$
\end{enumerate}
\end{theorem}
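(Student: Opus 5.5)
We follow Ky's bilinear strategy as adapted to the Orlicz setting in \cite{F-L2024}. The main tool is the bilinear decomposition of products of $H^{\varphi}(\rn)$ and $\mathrm{BMO}_{\varphi}(\rn)$ from \cite{F-L2024}. For $h\in H^{\varphi}(\rn)$ it gives a splitting
$$b\,h=\mathfrak{L}(h,b)+\mathfrak{H}(h,b),$$
where $\mathfrak{L}(h,b)\in L^1(\rn)$ and $\mathfrak{H}(h,b)\in H^{\varphi}(\rn)$. Both pieces are bounded bilinearly by $\|h\|_{H^{\varphi}}\|b\|_{\mathrm{BMO}_{\varphi}}$, with $\mathfrak{H}$ measured in $H^{\varphi}$. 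The decomposition is built from a wavelet (or atomic) expansion of $h$. The conditions on $i(\varphi)$, $I(\varphi)$ and \eqref{eq.1.5} are exactly what is needed for this decomposition and for atoms with enough vanishing moments. For $h$ we use a finite linear combination of $(\varphi,\infty,s)$-atoms and then pass to the general case by density. We write
$$[b,\mathfrak{T}]h=\mathfrak{T}\bigl((b-b(\cdot))h\bigr).$$

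The first step is to reduce the commutator to $\mathfrak{T}$ acting on distributions. Since $\mathbf{S}$ and $\mathcal{G}$ are sublinear with a vector-valued (Hilbert space valued) linear structure, we can write $\mathfrak{T}h=\|\vec{T}h\|_{\mathcal{H}}$. Here $\vec{T}$ is a linear vector-valued Calder\'on--Zygmund operator: for $\mathbf{S}$ with $\mathcal{H}=L^2(\Gamma(0),t^{1-n}dy\,dt)$, and for $\mathcal{G}$ with $\mathcal{H}=L^2(t\,dt)$. Then pointwise
$$[b,\mathfrak{T}]h(x)=\|b(x)\vec{T}h(x)-\vec{T}(bh)(x)\|_{\mathcal{H}}.$$
Using the decomposition we bound
$$|[b,\mathfrak{T}]h|\le \|\vec{T}(\mathfrak{L}(h,b))-\vec T(\text{appropriate pieces})\|+\mathfrak{T}(\mathfrak{H}(h,b)).$$
More carefully, we aim for the Ky-type identity
$$[b,\vec T]h=\vec{T}\bigl(\mathfrak{R}(h,b)\bigr)-\vec{T}\bigl(\mathfrak{H}(h,b)\bigr),$$
where $\mathfrak{R}(h,b):=(b-b(\cdot))h$ realized as a distribution. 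We then show that $\mathfrak{R}(h,b)$ belongs to $H^1(\rn)$ exactly when $h\in H^\varphi_b(\rn)$. The key lemma is
$$\|\mathfrak{M}(\mathfrak{R}(h,b))\|_{L^1}\lesssim \|[b,\mathfrak{M}]h\|_{L^1}+\|h\|_{H^{\varphi}}\|b\|_{\mathrm{BMO}_{\varphi}}.$$
This follows from $\mathfrak{M}(\mathfrak{R})\le [b,\mathfrak{M}]h+|b-\cdot|\,\mathfrak{M}h$, combined with the decomposition: the term $b\cdot(h\text{-part})$ is handled via the $L^1$ piece, and $\mathfrak{M}(\mathfrak{H})$ is controlled in $L^\varphi$.

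For (i), we then use two facts. First, $\vec T$ maps $H^1\to L^1$, which follows from the size and smoothness of the vector kernels and holds for $\mathbf{S},\mathcal{G}$ by \eqref{S,g-1.2}. Second, $\vec T$ maps $H^{\varphi}\to L^{\varphi}$, via atoms: on $4B$ we use $L^2$-boundedness, and off $4B$ we use kernel smoothness together with the vanishing moments of the atom, which gives decay $|x-x_B|^{-n-1}$ that is $\varphi$-integrable because $i(\varphi)>n/(n+1)$. The term $\mathfrak{T}(\mathfrak{H}(h,b))$ must end in $L^1$, not $L^\varphi$. So I would instead rearrange the decomposition so that the $\mathfrak{H}$-term appears as $\mathfrak{T}$ of an $H^1$ function plus $b\cdot$ ($L^\varphi$-bounded stuff). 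The cleanest route is to follow \cite[Theorem~3.5]{Ky-2013}: write $[b,\vec T]h=\vec T(\mathfrak{R}(h,b))+(\text{terms from } \vec T\text{ on }H^\varphi)\times\mathfrak{L}$-type pieces that are in $L^1$ by the pointwise multiplier estimate. Summing these gives $\|[b,\mathfrak{T}]h\|_{L^1}\lesssim\|h\|_{H^\varphi_b}$.

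For (ii), with $h\in H^{\varphi}(\rn)$ only, we split
$$[b,\mathfrak{T}]h\le \|b\vec Th-\vec T(bh)\|=\|\vec T(\mathfrak{L}(h,b))+\vec{T}(\mathfrak{H}(h,b))-b\vec Th\|.$$
For $\vec T\mathfrak{L}$ we use the weak $(1,1)$ bound of $\mathbf{S},\mathcal{G}$ from \cite{Stein-1958}, giving a bound by $\|\mathfrak{L}\|_{L^1}$. For $\vec T\mathfrak{H}$ we use $H^{\varphi}\to L^{\varphi}$ and $L^\varphi\subset L^{1,\infty}$-type control on level sets: since $\varphi$ is of upper type 1, $\varphi(t)\gtrsim t\varphi(1)$ for $t\ge1$, so $|\{|f|>\lambda\}|$ is controlled after normalization. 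The remaining term $b\,\vec T h$ is handled by the same device as the $\mathfrak{L}$ piece in the decomposition. The main obstacle is this last bookkeeping: making the Orlicz-BMO/Orlicz-Hardy product pieces land in $L^1$ or $L^{1,\infty}$ despite the homogeneity mismatch between $\varphi$ and $t$. For this I would rely on atom-by-atom estimates, using $\|\mathbf 1_B\|_{L^\varphi}$ normalizations and the lower type $q>n/(n+1)$ to sum the tails.
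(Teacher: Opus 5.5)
There is a genuine gap. The two steps that carry your argument are ill-defined or false as written, and the estimate that actually does the work is never formulated. For (i), your object $\mathfrak{R}(h,b):=(b-b(\cdot))h$ depends on the evaluation point $x$: it is $(b(x)-b(\cdot))h(\cdot)$, so it is not a single distribution. Consequently ``$\mathfrak{M}(\mathfrak{R}(h,b))$'', ``$\mathfrak{R}(h,b)\in H^1$ iff $h\in H^\varphi_b$'' and the identity $[b,\vec T]h=\vec T(\mathfrak R(h,b))-\vec T(\mathfrak H(h,b))$ have no meaning. The sketched bound $\mathfrak M(\mathfrak R)\le[b,\mathfrak M]h+|b-\cdot|\,\mathfrak Mh$ would in any case leave the non-integrable term $|b|\,\mathfrak Mh$.

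What the paper uses instead is the pointwise estimate imported from \cite[Lemma~5.8 and Theorem~5.10]{F-L2024}:
\begin{align*}
|[b,\mathfrak T]h|\le|\mathfrak T(\Pi_4(h,b))|+\mathfrak R(h,b),\qquad \|\mathfrak R(h,b)\|_{L^1}\lesssim\|h\|_{H^\varphi}\|b\|_{\mathrm{BMO}_\varphi}.
\end{align*}
Here $\Pi_4(h,b)$ is the diagonal wavelet term. It always lies in $L^1$, and it lies in $H^1$ with $\|\Pi_4(h,b)\|_{H^1}\lesssim\|h\|_{H^\varphi_b}$. The estimate rests on Ky's atomic splitting $(b(x)-b(\cdot))h=\sum_j\lambda_j(b(x)-b_{B_j})a_j-\sum_j\lambda_j(b(\cdot)-b_{B_j})a_j$. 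The second sum is broken by the wavelet decomposition into $H^1$ pieces plus $\Pi_4(h,b)$. The first sum is controlled in $L^1$ only through the localized estimate of Definition~\ref{def2.1}(iii),
$$\|(b-b_B)\mathfrak Ta\|_{L^1}\lesssim\|a\|_{L^q_B}\|\mathbf 1_B\|_{L^\varphi}\|b\|_{\mathrm{BMO}_\varphi}.$$
Proving this for $\mathbf S$ and $\mathcal G$ is the real content here (Propositions~\ref{proposition-Larea-S} and \ref{proposition-g-func}). The mean zero of $a$ and the bound on $\partial_i\partial_jP_t$ give $\mathfrak Ta(x)\lesssim(r/|x-x_0|)^{n+1}\|a\|_{L^q_B}$ off $2B$. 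Pairing this with the annular $\mathrm{BMO}_\varphi$ estimates yields $\sum_k2^{-k(n+1)}2^{kn/\sigma}$, which is finite exactly because $i(\varphi)>n/(n+1)$. Your proposal never states this estimate, and the $H^\varphi\to L^\varphi$ bound you discuss is not what is needed.

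For (ii), the argument fails at two points. First, upper type $1$ gives $\varphi(t)\le C\,t\,\varphi(1)$ for $t\ge1$, which is the reverse of what you use. In fact $L^\varphi\not\subset L^{1,\infty}$ under the hypotheses: $\varphi(t)=t^p$ with $n/(n+1)<p<1$ is admissible, and $|x|^{-\beta}\mathbf 1_{\{|x|<1\}}$ with $n<\beta<n/p$ lies in $L^p$ but not in $L^{1,\infty}$. So $\vec T(\mathfrak H(h,b))$, which you only know is in $L^\varphi$, is not controlled. Second, estimating $b\,\vec Th$ separately destroys the cancellation of the commutator. For $b\equiv1$ the target bound $\|h\|_{H^\varphi}\|b\|_{\mathrm{BMO}_\varphi}$ is $0$, while $b\,\vec Th=\vec Th\neq0$, so no device can bound that term on its own. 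The paper's mechanism is different: the only non-$H^1$ piece, $\Pi_4(h,b)$, lies in $L^1$ and not merely in $H^\varphi$. Hence the weak $(1,1)$ bound of $\mathfrak T$ applies to it, and everything else sits in the $L^1$ remainder $\mathfrak R(h,b)$. Your Hilbert-valued linearization $\mathfrak Th=\|\vec Th\|_{\mathcal H}$ is a legitimate entry point, but it is not needed, since sublinearity suffices for the atomic splitting. It also does not replace the missing localized estimate.
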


\begin{remark}
The space $H_b^\varphi(\mathbb{R}^n)$ is the largest subspace of $H^\varphi(\mathbb{R}^n)$ for which the commutator $[b, \mathfrak{T}]$ in Theorem \ref{main-Thm-1}(i),
that is, if $X \subset H^\varphi(\mathbb{R}^n)$ is any subspace such that $[b, \mathfrak{T}]$  is bounded from $X$ to $L^{1}(\mathbb{R}^n)$, then $[b, \mathfrak{T}]h \in L^{1}(\mathbb{R}^n)$ for all $h\in X$. By this, \eqref{S,g-1.2} and \cite[Theorem~5.10]{F-L2024}, it follows that
$h \in H_b^\varphi(\mathbb{R}^n)$, hence $X \subseteq H_b^\varphi(\mathbb{R}^n)$.
\end{remark}

\subsection{Marcinkiewicz integral}\label{s1.2}
Let $\Omega\in L^1(S^{n-1})$ be a homogeneous function of degree zero on $\rn$ and satisfy
\begin{align}\label{eq.1.1}
\int_{S^{n-1}}\Omega(x')\,dx'=0,
\end{align}
where $x'=x/|x|$ for any $x\in\rn\setminus\{\vec{\mathbf{0}}\}$.
The Marcinkiewicz integral is firstly introduced by Stein \cite{Stein-1958}. For $\Omega\in L^1(S^{n-1})$, the Marcinkiewicz integral $\mu_\Omega$ is defined by
$$\mu_{\Omega}(h)(x):=\left(\int_{0}^{\infty}\left|\int_{|x-y|\leq t}\frac{\Omega(x-y)}{|x-y|^{n-1}}f(y)\,dy\right|^{2}\frac{dt}{t^3}\right)^{1/2}
\quad \text{for all} \ x\in\mathbb{R}^n.$$
Let $0<\alpha\leq 1$. A function $\Omega$ is said to belong to the class $\mathrm{Lip}_{\alpha}$ if it satisfies the Lipschitz condition of order $\alpha$,
namely,
\begin{align*}
|\Omega(x')-\Omega(y')|\leq C|x'-y'|^{\alpha} \quad \text{for all} \ x', y'\in S^{n-1}.
\end{align*}
If $\Omega\in L^1(S^{n-1})$ and $\Omega\in\mathrm{Lip}_{\alpha}$,
then Stein \cite{Stein-1958} proved that
$$\mu_\Omega: \ L^1(\rn)\rightarrow L^{1,\,\infty}(\rn)$$
and
$$\mu_\Omega: \ L^q(\rn)\rightarrow L^q(\rn)\quad \text{for all} \ q\in(1, 2].$$
Subsequently, Benedek et al. \cite[Theorem~1 and Theorem~3]{Benedek-1962} extended the above results to the case $1<p<\infty$ under the assumption $\Omega\in L^{1}(S^{n-1})$.

Recall that the $L^p$-Dini condition is introduced in \cite{Y. D-2003}.
The $p$ order integral modulus of continuity of $\Omega$ is given by
$$\omega_{p, \, \delta}(\Omega):=\sup_{|\rho|<\delta}\left(\int_{S^{n-1}}|\Omega(\rho x')-\Omega(x')|^{p} \,d\sigma(x')\right)^{1/p},$$
where $p\in[1, \infty)$, $\delta>0$, the rotation $\rho$ on $\rn$ satisfies $|\rho|=\|\rho-I\|$ and $d\sigma(x')$ means the surface measure on the unit sphere $S^{n-1}$.
For any $p\in[1, \infty)$ and $\beta\in[0,1)$,
the class $\mathrm{Din}^{p}_{\beta}$ consists of those functions $\Omega\in L^p(S^{n-1})$ that satisfy the $L^p$-Dini condition of order $\beta$, that is,
$$\int_{0}^{1}\frac{\omega_{p, \, \delta}(\Omega)}{\delta^{1+\beta}}\,d\delta <\infty.$$
In particular, for the endpoint case $p=\infty$, the corresponding $L^\infty$-Dini condition is defined by requiring the integrability condition
$\int_{0}^{1}\frac{\omega_{\infty, \, \delta}(\Omega)}{\delta^{1+\beta}}\,d\delta<\infty,$
where the modulus of continuity is given by
$$\omega_{\infty, \, \delta}(\Omega):=\sup_{|\rho|<\delta}\|\Omega(\rho x')-\Omega(x')\|_{L^{\infty}(S^{n-1})}.$$
In the special case $\beta=0$, the $L^p$-Dini condition of order $\beta$ naturally degenerates to the classical $L^p$-Dini condition widely adopted in the paper.

Under the assumptions that $\Omega$ satisfies \eqref{eq.1.1} and the $L^1$-Dini condition,
Ding, Lu and Xue \cite[Theorem~1]{Dingyong-2002} proved that
$$\mu_\Omega: \ H^1(\rn)\rightarrow L^{1}(\rn).$$
Let $b\in\mathrm{BMO}(\rn)$.  Zhang and Wu \cite{Z-Wu-2005} investigated the commutator $[b, \mu_\Omega]$ and rigorously established the following boundedness result:
$$[b, \mu_\Omega]: \ H^1(\rn)\rightarrow L^{1,\,\infty}(\rn).$$
For more about the study of $[b, \mu_\Omega]$, we refer the interested reader to \cite{Hyy-Whx-2023, Luguanghui-2021, Zuo-2007}.

The theorem to be presented below verifies that the commutator related to the Marcinkiewicz  integral still holds for \eqref{H1b-2} in the Orlicz framework, while \eqref{H1b-3} is invalid, and a concrete counterexample is given in Theorem \ref{the-ex-4.1}.

\begin{theorem}\label{Theo-main-b-2}
Let $\Omega\in\mathrm{Din}^{p}_{\beta}$ with $p\in(1,\,\infty]$ and $\beta\in(0,1)$.
Denote
$$\epsilon=\min\lf\{\beta, \ \frac{1}{2}\left(1-\frac{1}{p}\right)\r\}.$$
Assume that  $\varphi$ is a growth function  satisfying $n/(n+\epsilon)<i(\varphi)\leq I(\varphi)<1$ and \eqref{eq.1.5}. Let $b\in\mathrm{BMO}_{\varphi}(\rn)$.
Then the following conclusions hold.
\begin{enumerate}
\item[\rm (i)] If $h\in H_b^{\varphi}(\rn)$, then there exists a positive constant $C$  such that
$$\|[b, \mathfrak{\mu_\Omega}]h\|_{L^{1}(\rn)}\leq C \|h\|_{H_b^{\varphi}(\rn)}.$$
\item[\rm (ii)] If $h\in H^{\varphi}(\rn)$, then there exists a positive constant $C$  such that
    $$\|[b, \mathfrak{\mu_\Omega}]h\|_{L^{1,\,\infty}(\rn)}\leq C \|h\|_{H^{\varphi}(\rn)}.$$
\end{enumerate}
\end{theorem}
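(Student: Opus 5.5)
We will follow the scheme used for Theorem \ref{main-Thm-1}, relying on the bilinear decomposition of products of $H^{\varphi}(\rn)$ and $\mathrm{BMO}_{\varphi}(\rn)$ from \cite{F-L2024}. For $h\in H^{\varphi}(\rn)$ and $b\in\mathrm{BMO}_{\varphi}(\rn)$ we write
$$(b-b(\cdot))h=-\,\mathfrak{S}(h,b)+\mathfrak{R}(h,b)\quad\text{in a suitable sense},$$
where $\mathfrak{S}$ is a bounded bilinear operator into $L^1(\rn)$ and $\mathfrak{R}$ is bounded into $H^{1}(\rn)$ (cf. \cite[Theorem~5.10]{F-L2024}). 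Since $\mu_\Omega$ is sublinear and nonnegative, we get pointwise
$$|[b,\mu_\Omega]h|\le \mu_\Omega\bigl(\mathfrak{R}(h,b)\bigr)+\mu_\Omega\bigl(\mathfrak{S}(h,b)\bigr).$$
To make this rigorous, we first work with finite linear combinations of $(\varphi,\infty,s)$-atoms. We write $h=\sum_j\lambda_j a_j$ and expand $(b-b_{B_j})a_j$ on each supporting ball $B_j$. The piece $\mathfrak{R}$ is then a sum of $H^1$ (molecular) pieces, with $\|\mathfrak{R}(h,b)\|_{H^1}\lesssim\|h\|_{H^\varphi}\|b\|_{\mathrm{BMO}_\varphi}$. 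The condition \eqref{eq.1.5} together with $i(\varphi)>n/(n+\epsilon)$ is exactly what guarantees enough vanishing moments and decay for these pieces and for the atomic decomposition to converge.

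\textbf{Step 1 ($H^1\to L^1$ for the $\mathfrak{R}$ part).} We invoke the result of Ding, Lu and Xue \cite{Dingyong-2002}: $\mu_\Omega:H^1(\rn)\to L^1(\rn)$ under the $L^1$-Dini condition, which is implied by $\Omega\in\mathrm{Din}^p_\beta$. This gives $\|\mu_\Omega(\mathfrak{R}(h,b))\|_{L^1}\lesssim\|h\|_{H^\varphi}\|b\|_{\mathrm{BMO}_\varphi}$.

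\textbf{Step 2 (the $\mathfrak{S}$ part for (i)).} Here we need $\mu_\Omega$ to be controlled on $\mathfrak{S}(h,b)$, which is where the hypothesis $h\in H^\varphi_b$ enters. By \cite{F-L2024} and the definition of $H_b^\varphi$, $h\in H_b^\varphi$ is equivalent to $\mathfrak{S}(h,b)\in H^1(\rn)$, with
$$\|\mathfrak{S}(h,b)\|_{H^1}\lesssim \|[b,\mathfrak{M}]h\|_{L^1}+\|h\|_{H^\varphi}\|b\|_{\mathrm{BMO}_\varphi},$$
because $\mathfrak{M}(\mathfrak{S}(h,b))\le[b,\mathfrak{M}]h+\mathfrak{M}(\mathfrak{R}(h,b))$. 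Applying $\mu_\Omega:H^1\to L^1$ again and adding the estimate from Step 1 yields (i) for finite atomic sums. The general case follows by density and Fatou's lemma, using the weak-type $L^1$ bound for $\mu_\Omega$ to identify limits.

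\textbf{Step 3 (part (ii)).} For $h\in H^\varphi$ only, we have $\mathfrak{S}(h,b)\in L^1$, and we use $\mu_\Omega:L^1\to L^{1,\infty}$, which is valid for $\Omega\in \mathrm{Din}^p_\beta$ with $p>1$ by the standard Calder\'on--Zygmund theory of vector-valued kernels in $L^2(dt/t^3)$. We combine this with Step 1 through the quasi-triangle inequality of $L^{1,\infty}$.

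The main obstacle is verifying the kernel regularity needed for the molecular estimate in Step 1 at the level of the pieces produced by the decomposition. Concretely, we must show that $\mu_\Omega$ maps the $H^1$-molecules/atoms coming from $\mathfrak{R}$ uniformly into $L^1$, with decay governed by $\epsilon=\min\{\beta,\frac12(1-\frac1p)\}$. We would first try the known estimate
$$\int_{|x-x_0|>2r}\mu_\Omega(a)(x)\,dx\lesssim 1,$$
whose key inequality is of the form $\sum_k 2^{-k\epsilon}<\infty$, with the exponent $\frac12(1-1/p)$ coming from Hölder's inequality applied to the $t$-integral near the sphere $|x-y|=t$. We would then check that the Orlicz summation $\sum_j\varphi(\cdot)$ closes because $i(\varphi)>n/(n+\epsilon)$.
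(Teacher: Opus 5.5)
There is a genuine gap, and it sits in your very first display. In $[b,\mu_\Omega]h(x)=\mu_\Omega\bigl((b(x)-b(\cdot))h\bigr)(x)$ the factor $b(x)$ is frozen at the output point. So $(b(x)-b(\cdot))h$ is not a fixed function that the bilinear decomposition of \cite{F-L2024} can split into an $L^1$ part and an $H^1$ part, and the pointwise bound $|[b,\mu_\Omega]h|\le\mu_\Omega(\mathfrak R(h,b))+\mu_\Omega(\mathfrak S(h,b))$ does not hold. With $h=\sum_j\lambda_ja_j$ one really has
\[
(b(x)-b(\cdot))h=\sum_j\lambda_j\bigl(b(x)-b_{B_j}\bigr)a_j-\sum_j\lambda_j\bigl(b-b_{B_j}\bigr)a_j ,
\]
and only the second sum is a product to which the paraproduct decomposition applies. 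The first sum produces $\sum_j|\lambda_j|\,|b(x)-b_{B_j}|\,\mu_\Omega(a_j)(x)$. Your argument never estimates this term, and it cannot be controlled by $\mu_\Omega:H^1\to L^1$ or by the weak type $(1,1)$ bound. (Writing instead $b(x)h-bh$ leaves $|b(x)|\mu_\Omega h(x)$, which is in general not integrable for unbounded $b$.) This term is present in both (i) and (ii). Your justification $\mathfrak M(\mathfrak S(h,b))\le[b,\mathfrak M]h+\mathfrak M(\mathfrak R(h,b))$ in Step 2 has the same flaw, but there you can simply cite \cite[Theorem~5.10]{F-L2024}.

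The missing ingredient is the weighted atom estimate
\[
\|(b-b_B)\mu_\Omega(a)\|_{L^1(\rn)}\lesssim\|a\|_{L^q_B}\|\mathbf 1_B\|_{L^\varphi(\rn)}\|b\|_{\mathrm{BMO}_\varphi(\rn)}
\]
for $(\varphi,q,0)$-atoms $a$ supported in $B=B(x_0,r)$, i.e.\ condition (iii) of Definition~\ref{def2.1}. It is what makes the remainder in \cite[Lemma~5.8]{F-L2024} bounded into $L^1$, and proving it is the actual content of Proposition~\ref{proposition-mar}.

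On $2B$ the estimate follows from $L^q$-boundedness (Lemma~\ref{lemma2.5}). Off $2B$, the unweighted bound $\int_{|x-x_0|>2r}\mu_\Omega(a)\lesssim 1$ with $\sum_k 2^{-k\epsilon}$ that you propose is not enough. By \eqref{eq.B-1}, \eqref{eq.B-2} and \eqref{eq.simga-k1}, the weight $|b-b_B|$ costs an extra factor $2^{kn(1/\sigma-1)}$ on the $k$-th annulus, where $\sigma<i(\varphi)$ is a lower type of $\varphi$. The paper handles this by splitting the $t$-integral at $|x-x_0|+r$:
\begin{itemize}
\item for small $t$ it applies H\"older with $|\Omega|^p$ against $|b-b_B|^{p'}$, getting $2^{-k(n+\frac12(1-\frac1p)-\frac n\sigma)}$;
\item for large $t$ it uses the cancellation of $a$ and the $L^p$-Dini condition of order $\beta$, getting $2^{-k(n+\beta-\frac n\sigma)}$.
\end{itemize}
Both sums converge exactly because $n(1/\sigma-1)<\epsilon$. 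This, not convergence of the Orlicz atomic sum, is where $i(\varphi)>n/(n+\epsilon)$ enters.

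Once this estimate is in place, your citations of $\mu_\Omega:H^1\to L^1$, the weak $(1,1)$ bound, and \cite{F-L2024} do finish the proof, as in Theorems~\ref{Theo-main-c-1} and~\ref{Theo-main-c-2}.
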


\begin{remark}
Let $p\in(1,\,\infty]$ and $\beta\in(0,1)$. From \cite[p.182]{Hyy-Whx-2023}), it follows that
$$\mathrm{Lip}_{\beta}\subset \mathrm{Din}^{p}_{\beta},$$
which implies that Theorem \ref{Theo-main-b-2} remains valid if the condition $\mathrm{Din}^{p}_{\beta}$ is replaced by $\mathrm{Lip}_{\beta}$.
\end{remark}

\subsection{Bochner-Riesz mean operator}\label{s1.3}
For $\delta>(n-1)/2$ and $0<R<\infty$, given test function $h$ on $\rn$, the Bochner-Riesz mean operator $T^{\delta}_{R}$ is defined by
$$T_R^{\delta}(h)(x):=\int_{|\xi|<R}\hat{h}(\xi)
\left(1-\frac{|\xi|^{2}}{R^2}\right)^{\delta}e^{2\pi ix\cdot\xi}\,d\xi$$
and the Bochner-Riesz maximal operator $T^{\delta}_{*}$ is denoted as
$$T_{*}^{\delta}(h)(x)=\sup_{R>0}\left|T_R^{\delta}h(x)\right| \quad \text{for all} \  x\in\rn.$$
The Bochner-Riesz mean operator admits  the following convolution representation (see \cite{Lee-2006}):
$$T_R^{\delta}(h)(x)= (h*\phi_{1/R})(x),$$
where the kernel satisfies the scaling relation $\phi_{\epsilon}(x)=\epsilon^{-n}\phi(x/\epsilon)$ with $$\phi(x)=\left\{(1-|\cdot|^{2})^{\delta}_{+}\right\}^\wedge(x).$$
Moreover, it follows from \cite{Sato-1995} that,  for any $0<p<1$ and $\delta=n/p-(n+1)/2$, the kernel obeys the estimate
\begin{align}\label{Sato-2}
\sup_{x\in\rn}(1+|x|)^{n/p}|D^\alpha\phi(x)|\leq C_{\alpha,n,p}\quad\text{for every multi-index}\,\alpha.
\end{align}

Let $\delta>(n-1)/2$.
A classical result (see \cite[p.389]{Stein-Ana-2}) showed that for any $p\in[1, \infty]$,
$$T_R^{\delta} \ \text{and} \ T_{*}^{\delta}: \ L^p(\rn)\longrightarrow L^p(\rn).$$
Moreover, Sj\'{o}lin \cite{P.-1981}, Stein, Taibleson and Weiss \cite{Stein-Taibleson-1981} proved that
$$T_R^{\delta}: \ H^{p}(\rn)\longrightarrow H^{p}(\rn) \quad \text{for all} \ 0<p\leq 1.$$
By this and $H^1(\rn)\subset L^1(\rn)$, we conclude that
$$T_R^{\delta}: \ H^{1}(\rn)\longrightarrow L^{1}(\rn).$$
In addition, Christ and Michael \cite{Christ-Michael-1988} proved that $T_R^{\delta}$ is of weak type $(1,1)$ for $\delta=(n-1)/2$ and $T_R^{\delta}$ fails to be bounded on $L^1(\rn)$ for any $\delta\leq (n-1)/2$.
There are many interesting results about the commutators of Bochner-Riesz mean operator and Bochner-Riesz maximal operator, we refer the interested readers to \cite{L. Z. Liu-2006, L. Liu and Q. Tong-2005, Z. Liu-2003}.

To present the main theorem in this subsection, we below recall the definitions of $L^{q}_{B}$ and $(\varphi, q, s)$-atom.

\begin{definition}\label{LqB}
For a ball $B\subset\mathbb{R}^n$ and exponent $q\in[1,\infty]$,
we write $L^{q}_{B}$ for the function space formed by all measurable functions $h$ on $\mathbb{R}^n$ with $\operatorname{supp}h\subset B$ and finite norm, whose norm is given by
\begin{align*}
\|h\|_{L^{q}_{B}}:=
\begin{cases}
\displaystyle\left(\frac{1}{|B|}\int_B |h(x)|^{q}\,dx\right)^{1/q}<\infty, & 1\le q<\infty;\\[2.5ex]
\displaystyle\operatorname{ess\,sup}_{x\in B}|h(x)|<\infty, & q=\infty.
\end{cases}
\end{align*}
\end{definition}

\begin{definition}\label{atom}
Let $\varphi$ be a growth function with critical lower type exponent $i(\varphi)$. A triple $(\varphi, q, s)$ is defined as admissible if $q\in(1,\infty]$ and the positive integer $s$ satisfies
$$s\geq m(\varphi):=\lfloor n(1/i(\varphi)-1)\rfloor.$$
A locally integrable function $a$ on $\rn$ is called a $(\varphi, q, s)$-atom when the following three conditions hold:
\begin{enumerate}
\item[\rm (i)] $\operatorname{supp} a \subset B$ for some ball $B\subset\mathbb{R}^n$;
\item[\rm (ii)] $\left\|a\right\|_{L^{q}_{B}}\leq\left\|{\mathbf 1}_{B}\right\|^{-1}_{L^{\varphi}(\rn)}$;
\item[\rm (iii)] $\int_{\mathbb{R}^{n}}x^{\beta}a(x)\,dx=0$  for every multi-index $\beta\in\zz_+^n$ with $|\beta|\leq s$.
\end{enumerate}
\end{definition}

The following theorem states that \eqref{H1b-2} and \eqref{H1b-3} still hold for commutators of Bochner-Riesz mean operator $T_R^{\delta}$ in the Orlicz setting.

\begin{theorem}\label{Theo-main-b-3}
Suppose that $\delta>(n-1)/2$ and  $\delta_0$ satisfies
$$(n-1)/2<\delta_0<\min\{\delta, \, (n+1)/2\}.$$
Let $\varphi$ be a growth function satisfying \eqref{eq.1.5} and $$2n/(n+1+2\delta_0)<i(\varphi)\leq I(\varphi)<1.$$
Assume that $b\in\mathrm{BMO}_{\varphi}(\rn)$ and $\mathfrak{T}\in\{T_R^{\delta}, \, T_*^{\delta}\}$ with $0<R<\infty$. Then the following conclusions hold.
\begin{enumerate}
\item[\rm (i)]If $h\in H_b^{\varphi}(\rn)$, then there exists a positive constant $C$  such that
$$\|[b, \mathfrak{T}]h\|_{L^{1}(\rn)}\leq C \|h\|_{H_b^{\varphi}(\rn)}.$$

\item[\rm (ii)]If $h\in H^{\varphi}(\rn)$, then there exists a positive constant $C$  such that
    $$\|[b, \mathfrak{T}]h\|_{L^{1,\,\infty}(\rn)}\leq C \|h\|_{H^{\varphi}(\rn)}.$$

\item[\rm (iii)] Let $h\in H_b^{\varphi}(\rn)$ and $a_{0}$ be a $(\varphi, q, 0)$-atom with $q\in(1, \infty)$.
If $T_R^{\delta}$ satisfies the $T^*1$ condition and the $T^*b$ condition, that is,
\begin{align}\label{eq.2.36}
\int_{\rn}T_R^{\delta}(a_{0})(x)\,dx=0=\int_{\rn}b(x)T_R^{\delta}(a_{0})(x)\,dx,
\end{align}
then there exists a positive constant $C$ such that
$$\|[b, T_R^{\delta}]h\|_{H^{1}(\rn)}\leq C \|h\|_{H_b^{\varphi}(\rn)}.$$
\end{enumerate}
\end{theorem}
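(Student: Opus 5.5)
The plan follows Ky's scheme in \cite{Ky-2013}, in the Orlicz form of \cite{F-L2024}, driven by the bilinear decomposition of $H^{\varphi}(\rn)\times\mathrm{BMO}_{\varphi}(\rn)$. By \cite[Theorem~5.10]{F-L2024}, for $b\in\mathrm{BMO}_\varphi(\rn)$ and $h\in H^\varphi(\rn)$ there are bounded sublinear operators $\mathfrak{R}_b: H^\varphi(\rn)\to L^1(\rn)$ and $\mathfrak{S}_b: H^\varphi(\rn)\to H^1(\rn)$ with
$$(b-b(\cdot))h\ \text{ decomposed so that }\ [b,\mathfrak{T}]h=\mathfrak{T}\big(\mathfrak{S}_b h\big)\pm \mathfrak{T}\big(\mathfrak{R}_b h\big)\text{-type terms}.$$
More precisely, writing $h=\sum_j\lambda_j a_j$ as a finite atomic decomposition with $(\varphi,q,s)$-atoms supported in balls $B_j$, we set
$$(b-b(x))a_j=(b-b_{B_j})a_j-(b(x)-b_{B_j})a_j .$$
For part (i) the commutator $[b,\mathfrak{T}]a_j(x)$ is controlled by $|b(x)-b_{B_j}|\,|\mathfrak{T}a_j(x)|+|\mathfrak{T}((b-b_{B_j})a_j)(x)|$. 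Sublinearity gives this for both $T_R^\delta$ and $T_*^\delta$, since $T_*^\delta$ is a sup of linear operators.

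\emph{Local part.} On $2B_j$ I would use the $L^q$ boundedness of $T_R^\delta,T_*^\delta$ together with the John--Nirenberg type estimate for $\mathrm{BMO}_\varphi$, i.e. $\|b-b_B\|_{L^{q'}_B}\lesssim \|b\|_{\mathrm{BMO}_\varphi}\|\mathbf 1_B\|_{L^\varphi}/|B|$. This gives a bound $\lesssim \|b\|_{\mathrm{BMO}_\varphi}$ times $|B_j|\|\mathbf 1_{B_j}\|^{-1}_{L^\varphi}\cdot\|\mathbf 1_{B_j}\|_{L^\varphi}/|B_j|$, which is uniformly bounded per atom.

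\emph{Nonlocal part.} Off $2B_j$ I would use the kernel decay \eqref{Sato-2} with $p$ chosen so that $\delta_0=n/p-(n+1)/2$, giving $|\phi(x)|\lesssim(1+|x|)^{-(n+1+2\delta_0)/2}$, with derivative bounds of the same order and uniformity in $R$ after scaling. The vanishing moments of $a_j$ allow a Taylor expansion, so $|T a_j(x)|\lesssim r_j^{\gamma}|x-x_j|^{-n-\gamma}$ with $n+\gamma=(n+1+2\delta_0)/2$. Combined with $\sum_k 2^{-k\gamma}k$ estimates for $|b-b_{B_j}|$ on dyadic annuli, and the hypothesis $2n/(n+1+2\delta_0)<i(\varphi)$, this makes the tails summable in the Orlicz sense. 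The term $(b(x)-b_{B_j})\mathfrak{T}a_j$ is exactly what cannot be summed in $L^1$ for general $h$. Here I would identify it, after summation, as $\mathfrak{T}$ applied to the $H^1$ piece of the bilinear decomposition, minus $[b,\mathfrak M]$-controlled terms. Following \cite{Ky-2013}, I would show $\|\sum_j\lambda_j(b-b_{B_j})\mathfrak{T}a_j\|_{L^1}\lesssim\|[b,\mathfrak M]h\|_{L^1}+\|b\|\|h\|_{H^\varphi}$, using that $\mathfrak{T}$ maps $H^1\to L^1$ and $\mathfrak{T}(b\,\cdot)$ differs from $b\,\mathfrak{T}$ by the $H^1\to L^1$ bounded remainder. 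Density of finite atomic combinations then yields (i).

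For (ii) the same split applies. The $L^1$-bounded remainder is handled trivially, and the $b\mathfrak{T}h$ product term $\sum_j\lambda_j(b-b_{B_j})\mathfrak{T}a_j$ is handled via a weak-type argument: the $L^1$ product $h\times b\in L^1+H^{\Phi}$ from \cite{F-L2024}, and $\mathfrak{T}$ is weak $(1,1)$ on the $L^1$ part.

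For (iii) it suffices to show that $[b,T_R^\delta]a$ is a fixed multiple of an $H^1$ molecule for each $(\varphi,q,0)$-atom $a$, and that the $H^1$ piece maps into $H^1$ because $T_R^\delta:H^1\to H^1$. Condition \eqref{eq.2.36} supplies exactly the vanishing integral $\int [b,T_R^\delta]a=\int bT a-\int T(ba)=0$, since $\int T(ba)=\widehat{ba}(0)$ times the multiplier at $0$, consistent with $T^*1=0$. The size and decay estimates are those above, with decay exponent $>n$, so the molecular characterization of $H^1$ applies. I expect the main obstacle to be this nonlocal decay bookkeeping: one must check that $(n+1+2\delta_0)/2>n$ (via $\delta_0>(n-1)/2$) and that it remains compatible with the Orlicz summation under \eqref{eq.1.5}. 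The uniformity in $R$, and passing from finite atomic sums to general $h\in H_b^\varphi$, need care but look routine.
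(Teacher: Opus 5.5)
\textbf{There is a genuine gap.} Your per-atom estimates are sound, and they are exactly the paper's Proposition~\ref{proposition-BR}, i.e.\ $T_R^\delta,T_*^\delta\in\mathcal K_\varphi$. The local bound comes from $L^q$-boundedness plus the $\mathrm{BMO}_\varphi$ John--Nirenberg estimate. Off $2B$ you get the decay $|T_R^\delta a(x)|\lesssim r^{\gamma}|x-x_0|^{-n-\gamma}\|a\|_{L^1}$, uniformly in $R$, with $n+\gamma=\delta_0+\frac{n+1}{2}$ and $0<\gamma<1$. This is summable against $|b-b_B|$ on annuli because $\sigma>2n/(n+1+2\delta_0)$.

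\textbf{Parts (i)--(ii): you attack the wrong term.} You say $\sum_j\lambda_j(b-b_{B_j})\mathfrak Ta_j$ is the term that ``cannot be summed in $L^1$''. In fact your own per-atom bound, together with $\sum_j|\lambda_j|\lesssim\|h\|_{H^\varphi}$ (from upper type $1$), puts it in $L^1$ for every $h\in H^\varphi$. So bounding it by $\|[b,\mathfrak M]h\|_{L^1}$ is beside the point.

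The real obstruction is $\mathfrak T\big(\sum_j\lambda_j(b-b_{B_j})a_j\big)$. Its argument is merely in $L^1$, where $\mathfrak T$ is not bounded. The missing idea is to apply the bilinear decomposition to that argument:
$\sum_j\lambda_j(b-b_{B_j})a_j=\Pi_1(h,b)+\Pi_3(h,b)+\sum_j\lambda_j\Pi_2(a_j,b-b_{B_j})+\Pi_4(h,b)$.
\begin{itemize}
\item Every piece except $\Pi_4$ lies in $H^1$ for all $h\in H^\varphi$.
\item $\Pi_4(h,b)\in H^1$ with $\|\Pi_4(h,b)\|_{H^1}\lesssim\|h\|_{H^\varphi_b}$ exactly because $[b,\mathfrak M]h\in L^1$ (Theorem~5.10 of \cite{F-L2024}). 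This is where $H^\varphi_b$ enters.
\item Together these give $|[b,\mathfrak T]h|\le|\mathfrak T(\Pi_4(h,b))|+\mathfrak R(h,b)$, with $\mathfrak R$ bounded into $L^1$.
\item Then (i) follows from $\mathfrak T:H^1\to L^1$, and (ii) from weak type $(1,1)$ and $\Pi_4:H^\varphi\times\mathrm{BMO}_\varphi\to L^1$.
\end{itemize}
Your opening formula gets both roles wrong. An $L^1$-valued remainder cannot sit inside $\mathfrak T$, and the piece inside $\mathfrak T$ is not in $H^1$ for general $h\in H^\varphi$.

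\textbf{Part (iii): the reduction is false.} You propose to show that $[b,T_R^\delta]a$ is a uniform multiple of a molecule for each atom $a$; this cannot hold. The Bochner--Riesz multiplier equals $1$ at the origin, so $\int T_R^\delta f=\int f$ for $f\in L^1$. Hence under \eqref{eq.2.36},
$\int[b,T_R^\delta]a=\int bT_R^\delta a-\int ba=-\int ba,$
which need not vanish for the individual atoms in a decomposition of $h$. Your own remark, ``$\widehat{ba}(0)$ times the multiplier at $0$'', gives $\int ba$, not $0$. The condition $\int T_R^\delta a_0=0$ is automatic for mean-zero atoms and says nothing about $\int T_R^\delta(ba)$. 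Were your reduction valid, $[b,T_R^\delta]$ would map finite atomic sums in $H^\varphi$ boundedly into $H^1$, and $H^\varphi_b$ would play no role.

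What is actually a molecule is $(b-b_{B_k})T_R^\delta h_k$, normalized by $\|h_k\|_{L^\infty_{B_k}}\|\mathbf 1_{B_k}\|_{L^\varphi}\|b\|_{\mathrm{BMO}_\varphi}$.
\begin{itemize}
\item Its integral $\int bT_R^\delta h_k-b_{B_k}\int T_R^\delta h_k$ vanishes by \eqref{eq.2.36}.
\item Its size and annular decay follow from your estimates.
\item The remaining part is $T_R^\delta$ applied to $\Pi_1(h,b)+\Pi_3(h,b)+\Pi_4(h,b)+\sum_k\Pi_2(h_k,b-b_{B_k})$. This is handled by $T_R^\delta:H^1\to H^1$, again using $h\in H^\varphi_b$ for the $\Pi_4$ term.
\end{itemize}
This is the paper's identity $[b,T_R^\delta]h=\mathfrak U(h,b)-T_R^\delta(\Pi_1)-T_R^\delta(\Pi_3)-T_R^\delta(\Pi_4)$.
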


\subsection{Paper structure}\label{s1.6}

%The organization of the article as follows.
In Section \ref{s2}, we give the definition of $\mathcal{K}_{\varphi}$ and prove that all of the aforementioned operators belong to $\mathcal{K}_{\varphi}$. Then we prove Theorems \ref{main-Thm-1}, \ref{Theo-main-b-2} and \ref{Theo-main-b-3}(i)-(ii) in Section \ref{Sec-3}. Finally,
we finish the proof of Theorem \ref{Theo-main-b-3}(iii),
and illustrate via a counterexample  why only the Bochner-Riesz mean operator $T_R^{\delta}$ possesses the boundedness stated in Theorem \ref{Theo-main-b-3}(iii).

\medskip

\noindent{\bf Notation.\,}
%Throughout the whole article, we adopt the following notation and notions.
We introduce the following notation and notions used uniformly in this work.
Let $\nn:=\{1,2,\cdots\}$. For any real number $s$, the symbol
$\lfloor s\rfloor$ denotes the greatest integer $i$ satisfying $i\leq s$. We denote by $d\sigma(x')$ the surface measure on the unit sphere $S^{n-1}$. Let $\vec{\mathbf{0}}$ denote the origin (or the zero vector) in $\rn$. For a function $u(x,t)$, define its extended gradient as the column vector $\nabla_{u}:=\left(\frac{\partial u}{\partial x_{1}},\, \dots\,,\frac{\partial u}{\partial x_{n}},\, \frac{\partial u}{\partial t}\right)$, and $|\nabla_{u}|^{2}:=|\nabla_{x}u|^{2}+|\partial_{t}u|^{2}$, where $\partial_{t}:=\frac{\partial_{u}}{\partial_{t}}$. For a function $h$, $\hat{h}$ denotes its Fourier transform. For any $a\in R$, its positive part is defined by $a_{+}:=\max\{a,\,0\}$. Let $C^{\infty}(\rn)$ be the space of all infinitely differentiable functions on $\rn$ and the symbol $C_{c}^{\infty}(\rn)$ denotes the subspace of such functions with compact support. The symbol $h_{1}\ls h_{2}$ (or $h_{2}\gs h_{1})$ means $h_{1}\leq Ch_{2}$ (or $h_{2}\geq Ch_{1}$), and $h_{1}\approx h_{2}$ means both $h_{1}\ls h_{2}$ and $h_{2}\ls h_{1}$ hold.

\section{Class $\mathcal{K}_{\varphi}$ and related examples}\label{s2}
The primary purpose of this section is to define class $\mathcal{K}_{\varphi}$ and verify that Lusin area integral, $g$-function, Marcinkiewicz integral and Bochner-Riesz mean operator belong to class $\mathcal{K}_{\varphi}$ in Propositions \ref{proposition-Larea-S}, \ref{proposition-g-func}, \ref{proposition-mar} and \ref{proposition-BR}, respectively.
To achieve this, we present the following definition and lemma.

\begin{definition}\label{def2.1}
Given a growth function $\varphi$ with $0<i(\varphi)\leq 1$,
we denote by $\mathcal{K}_{\varphi}$ the set of all sublinear operators $\mathfrak{T}$ obeying the three properties below:
\begin{enumerate}
\item[\rm (i)] $\mathfrak{T}$ is bounded from $H^{1}(\mathbb{R}^{n})$ to $L^{1}(\rn)$;
\item[\rm (ii)]$\mathfrak{T}$  is bounded from $L^{1}(\mathbb{R}^{n})$ to
$L^{1, \, \infty}(\rn)$;
\item[\rm (iii)] For any $b\in\mathrm{BMO}_{\varphi}(\mathbb{R}^{n})$ and any $(\varphi,q,0)$-atom
$a$ supported in ball $B$ with $q\in(1,\infty)$, some constant $C>0$ exists such that
\begin{align*}
\|(b-b_{B})\mathfrak{T}a\|_{L^{1}(\mathbb{R}^{n})}
\leq C\|a\|_{L^{q}_{B}}\|{\mathbf{1}_{B}\|_{L^{\varphi}(\rn)}\|b\|_{\mathrm{BMO}_{\varphi}(\rn)}}.
\end{align*}
\end{enumerate}
\end{definition}

\begin{lemma}\label{lemma2.5}
Given $b\in\mathrm{BMO}_{\varphi}(\rn)$ and a $(\varphi,q,0)$-atom $a$ with $\operatorname{supp}a \subset B$ for some ball $B$ and $q\in(1,\infty)$.
For a sublinear operator $\mathfrak{T}$ that is bounded on $L^{q}(\rn)$,
the following estimate holds:
\begin{align*}
\|(b-b_B)\mathbf{1}_{2B} \mathfrak{T} (a)\|_{L^{1}(\rn)}\ls \|a\|_{L^{q}_{B}}\|{\mathbf{1}_{B}\|_{L^{\varphi}(\rn)}\|b\|_{\mathrm{BMO}_{\varphi}(\rn)}}.
\end{align*}
\end{lemma}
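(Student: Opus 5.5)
We prove the estimate by Hölder's inequality on $2B$, followed by a John--Nirenberg type control of the oscillation of $b$ in $L^{q'}$ measured against $\|\mathbf 1_B\|_{L^\varphi(\rn)}$. Let $q'$ denote the conjugate exponent of $q\in(1,\infty)$. Hölder's inequality and the $L^q$-boundedness of $\mathfrak{T}$ give
\begin{align*}
\|(b-b_B)\mathbf{1}_{2B}\mathfrak{T}(a)\|_{L^1(\rn)}
&\le \Big(\int_{2B}|b(x)-b_B|^{q'}\,dx\Big)^{1/q'}\|\mathfrak{T}a\|_{L^q(\rn)}\\
&\ls \Big(\int_{2B}|b(x)-b_B|^{q'}\,dx\Big)^{1/q'}|B|^{1/q}\|a\|_{L^q_B}.
\end{align*}
It therefore suffices to show that
$$\Big(\frac{1}{|2B|}\int_{2B}|b(x)-b_B|^{q'}\,dx\Big)^{1/q'}\ls \frac{\|\mathbf 1_B\|_{L^\varphi(\rn)}}{|B|}\|b\|_{\mathrm{BMO}_\varphi(\rn)}.$$
Multiplying this bound by $|2B|^{1/q'}|B|^{1/q}\approx|B|$ gives exactly the right-hand side of Lemma~\ref{lemma2.5}.

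We first replace $b_B$ by $b_{2B}$. The triangle inequality gives $|b_{2B}-b_B|\le \frac{1}{|B|}\int_{2B}|b-b_{2B}|\le 2^n\|\mathbf 1_{2B}\|_{L^\varphi}\|b\|_{\mathrm{BMO}_\varphi}/|2B|$. Since $\varphi$ has upper type $1$, we have $\|\mathbf 1_{2B}\|_{L^\varphi}\ls\|\mathbf 1_B\|_{L^\varphi}$. To see this, set $\lambda=\|\mathbf 1_B\|_{L^\varphi}$. Then $\int\varphi(\mathbf 1_{2B}/(C\lambda))=2^n|B|\varphi(1/(C\lambda))\le 2^nC_1C^{-1}|B|\varphi(1/\lambda)\le 1$ for large $C$, where the second step uses the lower-type-$1$-on-$(0,1]$ consequence of upper type $1$, namely $\varphi(st)\ge s\varphi(t)/C_1$ for $s\ge1$ rewritten. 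The constant term is therefore harmless.

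The key step is the $L^{q'}$ John--Nirenberg estimate for $\mathrm{BMO}_\varphi$. We observe that the quantity $\frac{1}{|B'|}\int_{B'}|b-b_{B'}|$ is bounded by $\|b\|_{\mathrm{BMO}_\varphi}\|\mathbf 1_{B'}\|_{L^\varphi}/|B'|$. For a ball, $\|\mathbf 1_{B'}\|_{L^\varphi}=1/\varphi^{-1}(1/|B'|)$, so this bound is $\|b\|_{\mathrm{BMO}_\varphi}\,\rho(|B'|)$ with $\rho(t):=1/(t\varphi^{-1}(1/t))$. Thus $b$ lies in a Campanato-type space with a variable weight $\rho$. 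Because $\varphi$ is of lower type $i(\varphi)>0$ and upper type $1$, the function $\rho$ is nondecreasing, with $\rho(t)\approx t^{1/p-1}$-like growth for an exponent $p$ between $i(\varphi)$ and $1$. In particular, $\rho(|B'|)\le\rho(|2B|)\ls\rho(|B|)$ for all $B'\subset 2B$.

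Hence on the fixed ball $2B$, $b$ is a genuine BMO function with constant $\ls\rho(|B|)\|b\|_{\mathrm{BMO}_\varphi}$, in the sense of the BMO norm localized to subballs of $2B$. The classical local John--Nirenberg inequality on $2B$ then yields
$$\Big(\frac{1}{|2B|}\int_{2B}|b-b_{2B}|^{q'}\Big)^{1/q'}\ls\rho(|B|)\|b\|_{\mathrm{BMO}_\varphi} = \frac{\|\mathbf 1_B\|_{L^\varphi}}{|B|}\|b\|_{\mathrm{BMO}_\varphi},$$
which is the required bound. The main obstacle is verifying the monotonicity and doubling of $\rho$ from the type conditions, so that the localized BMO constant on $2B$ is controlled by $\rho(|B|)$. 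This is where the hypothesis that $\varphi$ is a growth function enters. If needed, one can alternatively quote the known John--Nirenberg inequality for $\mathrm{BMO}_\varphi$ from \cite{F-L2024}.
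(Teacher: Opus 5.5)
Your route is the standard one and matches what the paper relies on. The paper gives no argument for this lemma and defers to \cite[Lemma~5.6]{F-L2024}. After H\"older and the $L^q$ bound, your remaining steps are exactly the tools the paper later quotes from \cite{F-L2024}: \eqref{eq.B-1} with $\bar B=2B$ and $p=q'$, the comparison \eqref{eq.B-2} with $i=1$, and the doubling bound \eqref{eq.simga-k1}. Your fallback of quoting the $\mathrm{BMO}_\varphi$ John--Nirenberg inequality is precisely \eqref{eq.B-1}.

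\textbf{Incorrect justification in the doubling step.} The conclusion is true, but the argument you give for it is not. To prove $\|\mathbf 1_{2B}\|_{L^\varphi(\rn)}\ls\|\mathbf 1_B\|_{L^\varphi(\rn)}$ you use $\varphi(1/(C\lambda))\le C_1C^{-1}\varphi(1/\lambda)$ and attribute it to upper type $1$. Rewriting upper type $1$ actually gives $\varphi(st)\ge s\varphi(t)/C_1$ for $s\in(0,1]$, which is the opposite direction. The inequality you use is lower type $1$, equivalently your stated $\varphi(st)\ge s\varphi(t)/C_1$ for $s\ge1$. Lower type $1$ fails for $\varphi(t)=t^{p}$ with $p<1$, and indeed for every $\varphi$ with $I(\varphi)<1$, which is the paper's setting.

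\textbf{Repair.} Use the lower type $q\in(0,1]$ of the growth function instead. This gives $\varphi(1/(C\lambda))\le C_qC^{-q}\varphi(1/\lambda)$, so any $C\ge(2^nC_q)^{1/q}$ works, which is \eqref{eq.simga-k1}.

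\textbf{Smaller points.} First, $\rho$ is only almost nondecreasing, i.e.\ up to a constant depending on the type constants. That is all your argument needs. Second, the classical local John--Nirenberg inequality is formulated for a cube and its subcubes, not a ball and its subballs. It is cleaner to apply it on a cube $Q_0\supset 2B$ of side $4r$ centred at the centre of $B$. Each subcube $Q\subset Q_0$ lies in a ball $B_Q\subset 4\sqrt{n}\,B$ with $|B_Q|\approx|Q|$. Since the $\mathrm{BMO}_\varphi$ condition is global, this makes the local BMO constant of $b$ on $Q_0$ at most a constant times $\rho(|B|)\|b\|_{\mathrm{BMO}_\varphi(\rn)}$.
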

\begin{proof}
This result has been rigorously established in the existing literature, thus we omit the proof and refer interested readers to \cite[Lemma~5.6]{F-L2024} for the full presentation.
\end{proof}

The following four propositions will respectively demonstrate that the four sublinear operators, namely area integral operator $\mathbf{S}$,  $g$-function $\mathcal{G}$,  Marcinkiewicz integral $\mu_\Omega$ and the Bochner-Riesz mean operator $T_R^{\delta}$, belong to the class $\mathcal{K}_{\varphi}$.

\begin{proposition}\label{proposition-Larea-S}
Let $\varphi$ be a growth function with $n/(n+1)<i(\varphi)\leq I(\varphi)<1$.
Then area integral operator $\mathbf{S}\in \mathcal{K}_{\varphi}$.
\end{proposition}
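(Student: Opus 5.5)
Properties (i) and (ii) of Definition \ref{def2.1} hold for $\mathbf{S}$ by classical results. Property (i) is the Fefferman--Stein characterization \eqref{S,g-1.2}, since $\|\mathbf{S}h\|_{L^1(\rn)}\approx\|h\|_{H^1(\rn)}$. Property (ii) is Stein's weak type $(1,1)$ estimate for $\mathbf{S}$, recalled in Subsection \ref{s1.2}. The real work is property (iii). Fix a $(\varphi,q,0)$-atom $a$ supported in $B=B(x_0,r)$ with $q\in(1,\infty)$, and split
$$\|(b-b_B)\mathbf{S}a\|_{L^1(\rn)}=\|(b-b_B)\mathbf{1}_{2B}\mathbf{S}a\|_{L^1(\rn)}+\|(b-b_B)\mathbf{1}_{(2B)^c}\mathbf{S}a\|_{L^1(\rn)}.$$
Since $\mathbf{S}$ is bounded on $L^q(\rn)$, Lemma \ref{lemma2.5} handles the local term directly.

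For the far part we need a pointwise decay estimate for $\mathbf{S}a(x)$ when $|x-x_0|\ge 2r$. We use $\int a=0$ and subtract $\nabla P_t(y-x_0)$ inside $\nabla u_a(y,t)=\int a(z)\nabla P_t(y-z)\,dz$. The mean value theorem gives
$$|\nabla u_a(y,t)|\lesssim \|a\|_{L^1}\, r\,\sup_{z\in B}(t+|y-z|)^{-n-2}.$$
We then integrate over the cone $\Gamma(x)$ with weight $t^{1-n}$ and split into $t<|x-x_0|/2$ (where $|y-z|\gtrsim|x-x_0|$) and $t\ge |x-x_0|/2$. We aim for
$$\mathbf{S}a(x)\lesssim \|a\|_{L^1}\frac{r}{|x-x_0|^{n+1}}\lesssim |B|\,\|a\|_{L^q_B}\frac{r}{|x-x_0|^{n+1}}.$$
This is a routine but slightly delicate computation. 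The region $t$ small, $y$ near $B$ cannot occur because $|y-x|<t$ forces $y$ far from $B$ when $t<|x-x_0|/2$.

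We then sum over the annuli $2^{k+1}B\setminus 2^kB$, $k\ge1$:
$$\|(b-b_B)\mathbf{1}_{(2B)^c}\mathbf{S}a\|_{L^1}\lesssim \|a\|_{L^q_B}\sum_{k\ge1}2^{-k(n+1)}\int_{2^{k+1}B}|b-b_B|.$$
Write $|b-b_B|\le |b-b_{2^{k+1}B}|+\sum_{j=1}^{k+1}|b_{2^jB}-b_{2^{j-1}B}|$. Each oscillation term is bounded by $\|b\|_{\mathrm{BMO}_\varphi}\|\mathbf 1_{2^jB}\|_{L^\varphi}/|2^{j}B|$ times $|2^{k+1}B|$. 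This reduces the problem to controlling $\|\mathbf 1_{2^jB}\|_{L^\varphi(\rn)}$ by $\|\mathbf 1_B\|_{L^\varphi(\rn)}$. By lower type $i(\varphi)-\varepsilon$, with $\varepsilon>0$ small chosen using $i(\varphi)>n/(n+1)$, we get
$$\|\mathbf 1_{2^jB}\|_{L^\varphi}\lesssim 2^{jn/(i(\varphi)-\varepsilon)}\|\mathbf 1_B\|_{L^\varphi}.$$
Indeed, $\|\mathbf 1_E\|_{L^\varphi}=1/\varphi^{-1}(1/|E|)$ and the lower type gives the growth. Hence the $k$-th term is at most
$$\|a\|_{L^q_B}\|\mathbf 1_B\|_{L^\varphi}\|b\|_{\mathrm{BMO}_\varphi}\,k\,2^{-k(n+1)}2^{kn}2^{kn/(i(\varphi)-\varepsilon)-kn}.$$

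The exponent is negative exactly when $n/(i(\varphi)-\varepsilon)<n+1$, i.e.\ when $i(\varphi)>n/(n+1)$ after shrinking $\varepsilon$. So the series converges, and property (iii) follows. The main obstacle is this last summation: the balance between the decay $2^{-k(n+1)}$ and the growth of $\|\mathbf 1_{2^kB}\|_{L^\varphi}$ is precisely what uses the hypothesis $n/(n+1)<i(\varphi)$. The kernel decay estimate for $\mathbf{S}$ is secondary but needs care in the cone integration.
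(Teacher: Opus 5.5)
Your proposal follows the paper's proof essentially step for step. It cites classical results for (i)--(ii), applies Lemma~\ref{lemma2.5} on $2B$, derives the mean-value pointwise bound $\mathbf{S}a(x)\lesssim (r/|x-x_0|)^{n+1}\|a\|_{L^q_B}$ off $2B$, and sums over annuli against $\|\mathbf 1_{2^kB}\|_{L^\varphi(\rn)}\lesssim 2^{kn/\sigma}\|\mathbf 1_B\|_{L^\varphi(\rn)}$ with $\sigma>n/(n+1)$. The only difference is that you telescope $b_{2^{k+1}B}-b_B$ by hand instead of citing \eqref{eq.B-2}, which costs only a harmless factor $k$.

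One small repair is needed. With the cut at $t<|x-x_0|/2$, the claim $|y-z|\gtrsim|x-x_0|$ fails when $|x-x_0|$ is close to $2r$. Either cut at $|x-x_0|/4$ as the paper does, or note that $t+|y-z|>|x-z|\ge|x-x_0|/2$ throughout $\Gamma(x)$, which is all your kernel bound needs.
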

\begin{proof}
It follows from \cite[Corollary~1]{Stein-Fefferman-1} that the area integral operator $\mathbf{S}$ is bounded from $H^{1}(\rn)$ to $L^{1}(\rn)$. The boundedness of $\mathbf{S}$ from $L^{1}(\rn)$ to $L^{1,\,\infty}(\rn)$ is a classical result established in \cite[Theorem~3]{Stein-1958}. Moreover, the $L^{q}(\rn)$ boundedness of $\mathbf{S}$ for all $q\in(1,\infty)$ is guaranteed by \cite[Theorem~4]{Stein-1958}. In view of these existing boundedness properties, it therefore suffices to verify that
$\mathbf{S}$ satisfies Definition \ref{def2.1}(iii).

Now, we verify that $\mathbf{S}$ satisfies Definition \ref{def2.1}(iii). Suppose that $b\in\mathrm{BMO}_{\varphi}(\rn)$ and $a$ is a $(\varphi,q,0)$-atom with $\operatorname{supp}a \subset B$ and $q\in(1,\infty)$.
Based on the $L^{q}(\rn)$ boundedness of $\mathbf{S}$,
we use Lemma \ref{lemma2.5} to derive
\begin{align*}
\|(b-b_{B})\mathbf{S} (a)\|_{L^{1}(\mathbb{R}^{n})}
&\leq\|(b-b_B)\mathbf{1}_{2B} \mathbf{S} (a)\|_{L^{1}(\rn)}
+ \|(b-b_B)\mathbf{1}_{(2B)^c} \mathbf{S}(a)\|_{L^{1}(\rn)}\\
&\ls \|a\|_{L^{q}_{B}}\|{\mathbf{1}_{B}\|_{L^{\varphi}(\rn)}\|b\|_{\mathrm{BMO}_{\varphi}(\rn)}}
+\|(b-b_B)\mathbf{1}_{(2B)^c} \mathbf{S}(a)\|_{L^{1}(\rn)}.
\end{align*}
Therefore, we only need to prove that
\begin{align}\label{2ec-S}
\|(b-b_B)\mathbf{1}_{(2B)^c} \mathbf{S}(a)\|_{L^{1}(\rn)}\ls \|a\|_{L^{q}_{B}}\|{\mathbf{1}_{B}\|_{L^{\varphi}(\rn)}\|b\|_{\mathrm{BMO}_{\varphi}(\rn)}}.
\end{align}

To estimate \eqref{2ec-S}, we denote $B=B(x_0, r)$ and set $x\in(2^{k+1}B)\setminus  (2^{k}B)$ for any $k\in\nn$.
Write $\partial_j=\frac{\partial}{\partial_{x_j}}$($j\in\nn$ and $1\leq j\leq n$) and $\partial_{n+1}=\frac{\partial}{\partial_{t}}$. Choose $\sigma$ such that $\varphi$ is of lower type $\sigma$ with $n/(n+1)<\sigma<i(\varphi)$. So,
\begin{align}\label{eq-2.2}
n\left(\frac{1}{\sigma}-1\right)<1.
\end{align}
By the definition of $\mathbf{S}$, we see that for any $x\in(2^{k+1}B)\setminus  (2^{k}B)$,
\begin{align}\label{eq.2.4}
[\mathbf{S}(a)(x)]^{2}&= \int_{\Gamma(x)}|\nabla u_a(y,t)|^{2}t^{1-n}\,dy\,dt\\
&= \int_{\Gamma(x)}\left[\sum_{j=1}^n \left| \frac{\partial}{\partial x_j}(P_t*a)(y)\right|^{2}+ \left|\frac{\partial}{\partial t}(P_t*a)(y)\right|^{2}\right]t^{1-n}\,dy\,dt\notag\\
&= \sum_{j=1}^{n+1}\int_{\Gamma(x)}\lf|\lf((\partial_j P_t)*a\r)(y)\r|^{2}t^{1-n}\,dy\,dt\notag,
\end{align}
where $\Gamma(x):=\left\{(y,t)\in\rn\times(0,\infty):|y-x|<t\right\}$.
Using $\operatorname{supp}a \subset B$ and $\int_{\rn}a(z)\,dz=0$, we deduce that for any $t\in(0,\infty)$ and $y\in\rn$,
\begin{align*}
\lf|\lf((\partial_j P_t)*a\r)(y)\r|
&=\left|\int_{B}\partial_{j}P_t(y-z)a(z)\,dz\right|\\
&=\left|\int_{B}(\partial_{j}P_t(y-z)-\partial_{j}P_t(y-x_0))a(z)\,dz\right|.
\end{align*}
Further, for $j\in\{1,2,\dots,n+1\}$, there exist $\theta\in(0,1)$ and $\xi_{\theta}={\theta}z+(1-{\theta})x_0$ with $z\in B(x_0, r)$
such that
\begin{align*}
\lf|\partial_{j}P_t(y-z)-\partial_{j}P_t(y-x_0)\r|
&\leq\sum_{i=1}^{n}\lf|\partial_{i}\partial_{j}P_t(y-({\theta}z+(1-{\theta})x_0))\r||x_0-z|\\
&\ls \frac{n}{(t^{2}+\lf|y-\xi_{\theta}\r|^{2})^{(n+2)/2}}|x_0-z|,
\end{align*}
where the last step used \cite[(11.9)]{Yang-2017}, that is,
$\partial_{i}\partial_{j}P_t(y-\xi_{\theta})
\ls\frac{1}{(t^{2}+\lf|y-\xi_{\theta}\r|^{2})^{(n+2)/2}}.$
Substituting the above two estimates into \eqref{eq.2.4}, we obtain
\begin{align*}
[\mathbf{S}(a)(x)]^{2}
&\ls\sum_{j=1}^{n+1}\int_{\Gamma(x)}
\left(\int_{B}\frac{n}{(t^{2}+\lf|y-\xi_{\theta}\r|^{2})^{(n+2)/2}}|x_0-z||a(z)|\,dz\right)^{2}
t^{1-n}\,dy\,dt\\
&\ls\sum_{j=1}^{n+1}\int_{0<t<{d/4}}\int_{|y-x|<t}
\left(\int_{B}\frac{n}{(t^{2}+\lf|y-\xi_{\theta}\r|^{2})^{(n+2)/2}}|x_0-z||a(z)|\,dz\right)^{2}
t^{1-n}\,dy\,dt\\
&\qquad+\sum_{j=1}^{n+1}\int_{{d/4}\leq t<\infty}\int_{|y-x|<t}
\left(\int_{B}\frac{n}{(t^{2}+\lf|y-\xi_{\theta}\r|^{2})^{(n+2)/2}}|x_0-z||a(z)|\,dz\right)^{2}
t^{1-n}\,dy\,dt\\
&=:{\rm I}+{\rm II},
\end{align*}
where  $x\in\lf(2^{k+1}B(x_0, r)\r)\setminus\lf(2^{k}B(x_0, r)\r)$ and $d=|x-x_0|\geq 2r$.

To estimate ${\rm I}$, by $|y-x|<t<d/4=|x-x_0|/4$ and $\xi_{\theta}={\theta}z +(1-{\theta})x_{0}\in B(x_0, r)$ with $z\in B$, we have
\begin{align*}
|y-\xi_{\theta}|&\geq |y-x_0|-|x_0-\xi_{\theta}|\\
&\geq |x-x_0|-|x-y|-|x_0-\xi_{\theta}|\\
&\geq d-d/4-r \\
&\geq {3d}/4-d/2=d/4.
\end{align*}
This, along with
\begin{align}\label{Ba-BAQ}
\int_B|a(z)|\,dz\leq |B|^{1-1/q}\|a\|_{L^{q}(\rn)}\approx r^{n}\|a\|_{L^{q}_{B}},
\end{align}
yields
\begin{align*}
&\int_{B}\frac{n}{(t^{2}+\lf|y-\xi_{\theta}\r|^{2})^{(n+2)/2}}|x_0-z||a(z)|\,dz\\
&\quad\ls \int_B \frac{n}{|y-\xi_{\theta}|^{n+2}}|x_0-z||a(z)|\,dz\\
&\quad\ls nr(d/4)^{-n-2}\int_B|a(z)|\,dz\\
&\quad\ls nr^{n+1}(d/4)^{-n-2}\|a\|_{L^{q}_{B}},
\end{align*}
which implies
\begin{align*}
{\rm I}
&\ls
n^{2}r^{2n+2}(d/4)^{-2n-4}\|a\|_{L^{q}_{B}}^{2}\int_{0<t<{d/4}}\int_{|y-x|<t}t^{1-n}\,dy\,dt\\
&\ls
n^{2}r^{2n+2}(d/4)^{-2n-4}\|a\|_{L^{q}_{B}}^{2}\int_{0<t<{d/4}}t^{1-n}t^{n}\,dt\\
&\ls \lf(\frac{r}{d}\r)^{2n+2}\|a\|_{L^{q}_{B}}^{2}.
\end{align*}

To estimate ${\rm II}$, by $z\in B(x_0, r)$ and the following rough estimate:
$$\frac{1}{(t^{2}+\lf|y-\xi_{\theta}\r|^{2})^{(n+2)/2}}\leq t^{-n-2},$$
we have
\begin{align*}
{\rm II}
&\ls\sum_{j=1}^{n+1}\int_{t\geq {d/4}}\int_{|y-x|<t}\left(rt^{-n-2}\int_B|a(z)|\,dz\right)^{2}t^{1-n}\,dy\,dt\\
&\ls\sum_{j=1}^{n+1}\int_{t\geq {d/4}}\int_{|y-x|<t}\left(r^{n+1}t^{-n-2}\|a\|_{L^{q}_{B}}\right)^{2}t^{1-n}\,dy\,dt\\
&\approx r^{2n+2}\|a\|_{L^{q}_{B}}^{2}\int_{t\geq {d/4}}t^{-(2n+3)}\,dt\\
&\ls\lf(\frac{r}{d}\r)^{2n+2}\|a\|_{L^{q}_{B}}^{2}.
\end{align*}
Finally,  combining the estimates of ${\rm I}$ and ${\rm II}$, we conclude that 
\begin{align*}
[\mathbf{S}(a)(x)]^{2}\ls {\rm I}+{\rm II}
\ls \lf(\frac{r}{d}\r)^{2n+2}\|a\|_{L^{q}_{B}}^{2} \quad \text{for all} \ x\in\lf(2^{k+1}B(x_0, r)\r)\setminus\lf(2^{k}B(x_0, r)\r).
\end{align*}
From this and $d=|x-x_0|\geq 2^{k}r$, it follows that
\begin{align*}
\mathbf{S}(a)(x)\ls \left(\frac{r}{|x-x_0|}\right)^{n+1}\|a\|_{L^{q}_{B}}
\ls 2^{-k(n+1)}\|a\|_{L^{q}_{B}},
\end{align*}
which implies
\begin{align*}
\|(b-b_B)\mathbf{1}_{(2B)^c} \mathbf{S}(a)\|_{L^{1}(\rn)}&\leq \sum_{k=1}^{\infty}\int_{(2^{k+1}B)\setminus(2^{k}B)}|b(x)-b_B||\mathbf{S}(a)(x)|\,dx\\
&\ls \|a\|_{L^{q}_{B}}\sum_{k=1}^{\infty}2^{-k(n+1)}\int_{2^{k+1}B}\left(|b(x)-b_{2^{k+1}B}|+
|b_B-b_{2^{k+1}B}|\right)\,dx.
\end{align*}
Note that $\varphi$ is of lower type $\sigma$. From \cite[Lemma~3.4 and Lemma~5.5]{F-L2024},
it follows that for any ball $\bar{B}\subset\rn$ and $p\in[1, \infty)$,
\begin{align}\label{eq.B-1}
\left(\int_{\bar{B}}|b(x)-b_{\bar{B}}|^{p}\,dx\right)^{1/p}
\ls |\bar{B}|^{1/p-1}\|\mathbf{1}_{\bar{B}}\|_{L^{\varphi}(\rn)}
\|b\|_{\mathrm{BMO}_{\varphi}(\rn)}
\end{align}
and 
\begin{align}\label{eq.B-2}
|b_{2^{i}\bar{B}}-b_{\bar{B}}|
\ls 2^{in/\sigma}
|2^{i}\bar{B}|^{-1}\|{\mathbf 1}_{\bar{B}}\|_{L^{\varphi}(\mathbb{R}^n)}
\|b\|_{\mathrm{BMO}_{\varphi}(\mathbb{R}^n)} \quad \text{for all} \ i\in\nn.
\end{align}
Synthesizing the three estimates obtained above, we apply \eqref{eq-2.2} to derive
\begin{align}\label{eq.2.18}
&\|(b-b_B)\mathbf{1}_{(2B)^c} \mathbf{S}(a)\|_{L^{1}(\rn)}\\
&\quad\ls \|a\|_{L^{q}_{B}}\sum_{k=1}^{\infty}2^{-k(n+1)}
\lf[\int_{2^{k+1}B}|b(x)-b_{2^{k+1}B}|
\,dx+|2^{k+1}B||b_B-b_{2^{k+1}B}|\r]\notag\\
&\quad\ls \|a\|_{L^{q}_{B}}\sum_{k=1}^{\infty}2^{-k(n+1)}\left(\|{\mathbf 1}_{2^{k+1}B}\|_{L^{\varphi}(\mathbb{R}^n)}\|b\|_{\mathrm{BMO}_{\varphi}(\rn)}+
2^{\frac{(k+1)n}{\sigma}}\|{\mathbf 1}_{B}\|_{L^{\varphi}(\mathbb{R}^n)}\|b\|_{\mathrm{BMO}_{\varphi}(\rn)}\right)\notag\\
&\quad\ls \|a\|_{L^{q}_{B}}\|{\mathbf 1}_{B}\|_{L^{\varphi}(\mathbb{R}^n)}
\|b\|_{\mathrm{BMO}_{\varphi}(\rn)}\sum_{k=1}^{\infty}2^{-k(n+1)}2^{\frac{(k+1)n}{\sigma}}\notag\\
&\quad \approx \|a\|_{L^{q}_{B}}\|{\mathbf 1}_{B}\|_{L^{\varphi}(\mathbb{R}^n)}
\|b\|_{\mathrm{BMO}_{\varphi}(\rn)}\sum_{k=1}^{\infty}2^{-k\lf(1-n\left(\frac{1}{\sigma} - 1\right)\r)}\notag\\
&\quad\ls \|a\|_{L^{q}_{B}}\|{\mathbf 1}_{B}\|_{L^{\varphi}(\mathbb{R}^n)}
\|b\|_{\mathrm{BMO}_{\varphi}(\rn)}\notag,
\end{align}
where the three step used \cite[(2.3)]{F-L2024}, that is,
\begin{align}\label{eq.simga-k1}
\|{\mathbf 1}_{2^{k+1}B}\|_{L^{\varphi}(\mathbb{R}^n)}
\ls 2^{\frac{(k+1)n}{\sigma}}\|{\mathbf 1}_{B}\|_{L^{\varphi}(\mathbb{R}^n)}.
\end{align}
This allows us to obtain that \eqref{2ec-S} holds. So, $\mathbf{S}$ satisfies Definition \ref{def2.1}(iii), as desired.
\end{proof}

\begin{proposition}\label{proposition-g-func}
Let $\varphi$ be a growth function with $n/(n+1)<i(\varphi)\leq I(\varphi)<1$.
Then $g$-function $\mathcal{G}$ belongs to $\mathcal{K}_{\varphi}$.
\end{proposition}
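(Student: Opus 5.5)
We follow the scheme of the proof of Proposition \ref{proposition-Larea-S}. Properties (i) and (ii) of Definition \ref{def2.1}, as well as the $L^q(\rn)$ boundedness of $\mathcal{G}$ for all $q\in(1,\infty)$, are classical. Boundedness $H^1(\rn)\to L^1(\rn)$ follows from \cite[Corollary~1]{Stein-Fefferman-1}, or from the pointwise domination $\mathcal{G}(h)\ls \mathbf{S}(h)$ together with Proposition \ref{proposition-Larea-S}. Weak type $(1,1)$ and $L^q$ boundedness are given in \cite{Stein-1958}. So only Definition \ref{def2.1}(iii) must be checked. Let $a$ be a $(\varphi,q,0)$-atom supported in $B=B(x_0,r)$. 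We split $\|(b-b_B)\mathcal{G}(a)\|_{L^1}$ into the parts over $2B$ and $(2B)^c$. The part over $2B$ is handled by Lemma \ref{lemma2.5} via the $L^q$ boundedness of $\mathcal{G}$.

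For the part over $(2B)^c$ we aim for the pointwise decay
\begin{align*}
\mathcal{G}(a)(x)\ls \left(\frac{r}{|x-x_0|}\right)^{n+1}\|a\|_{L^q_B},\qquad x\notin 2B.
\end{align*}
Since $\int a=0$, we write $(\partial_jP_t*a)(x)=\int_B[\partial_jP_t(x-z)-\partial_jP_t(x-x_0)]a(z)\,dz$ for $j=1,\dots,n+1$. The mean value theorem and the second-derivative bound \cite[(11.9)]{Yang-2017} give a bound of $r\,(t^2+|x-\xi_\theta|^2)^{-(n+2)/2}$. Here $|x-\xi_\theta|\ge d/2$ with $d=|x-x_0|$. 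Combining this with \eqref{Ba-BAQ} yields
$$|(\nabla P_t*a)(x)|\ls r^{n+1}\|a\|_{L^q_B}\,(t+d)^{-(n+2)}.$$
Hence
$$[\mathcal{G}(a)(x)]^2\ls r^{2n+2}\|a\|_{L^q_B}^2\int_0^\infty (t+d)^{-2n-4}t\,dt\approx r^{2n+2}d^{-2n-2}\|a\|_{L^q_B}^2.$$
Splitting into $t<d$ and $t\ge d$ makes this computation explicit, analogously to the terms ${\rm I}$ and ${\rm II}$ above. The computation is simpler than for $\mathbf{S}$ because there is no cone integration.

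Given this decay, we sum over the annuli $(2^{k+1}B)\setminus(2^kB)$ exactly as in \eqref{eq.2.18}. We use \eqref{eq.B-1}, \eqref{eq.B-2} and \eqref{eq.simga-k1} with a lower type $\sigma\in(n/(n+1),i(\varphi))$. The resulting series $\sum_k 2^{-k(1-n(1/\sigma-1))}$ converges by \eqref{eq-2.2}, which proves (iii).

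The only genuinely new step is the kernel estimate, namely the uniform bound on second derivatives of $P_t$ including the $t$-derivative. This is already supplied by \cite[(11.9)]{Yang-2017}, so we expect no real obstacle beyond checking that the $t$-integral converges, which it does since $2n+4>2$.
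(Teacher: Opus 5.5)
Your proposal is correct and follows essentially the same route as the paper. The paper also uses Lemma \ref{lemma2.5} on $2B$, the mean value theorem with the second-derivative bound \cite[(11.9)]{Yang-2017} to get $\mathcal{G}(a)(x)\ls (r/|x-x_0|)^{n+1}\|a\|_{L^q_B}$ off $2B$, and then the annular summation of \eqref{eq.2.18}. The differences are cosmetic: you handle the $t$-integral in one piece via $(t+d)^{-(n+2)}$ where the paper splits it into ${\rm J}_1$ and ${\rm J}_2$, and for $H^1\to L^1$ the paper cites \cite[Corollary~3]{Stein-Fefferman-1} (your alternative via the classical pointwise bound $\mathcal{G}\ls\mathbf{S}$ also works).
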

\begin{proof}
By virtue of the results in \cite[Corollary~3]{Stein-Fefferman-1} and \cite[Corollary and Theorem~5]{Stein-1958}, $g$-function $\mathcal{G}$ is known to be bounded from $H^{1}(\rn)$ to $L^{1}(\rn)$, from $L^{1}(\rn)$ to $L^{1,\,\infty}(\rn)$, and on $L^{q}(\rn)$ for every $q\in(1,\infty)$, respectively. This implies that the only remaining task is to confirm that $g$-function $\mathcal{G}$ fulfills Definition \ref{def2.1}(iii).
In fact, by Lemma \ref{lemma2.5}, we find that
\begin{align*}
\lf\|(b-b_{B})\mathcal{G}(a)\r\|_{L^{1}(\mathbb{R}^{n})}
&\ls \lf\|(b-b_B)\mathbf{1}_{2B}\mathcal{G}(a)\r\|_{L^{1}(\rn)}
+\lf\|(b-b_B)\mathbf{1}_{(2B)^c}\mathcal{G}(a)\r\|_{L^{1}(\rn)}\\
&\ls \|a\|_{L^{q}_{B}}\|{\mathbf{1}_{B}\|_{L^{\varphi}(\rn)}
\|b\|_{\mathrm{BMO}_{\varphi}(\rn)}}
+\lf\|(b-b_B)\mathbf{1}_{(2B)^c}\mathcal{G}(a)\r\|_{L^{1}(\rn)},
\end{align*}
where $a$ is a $(\varphi,q,0)$-atom with $\operatorname{supp}a \subset B$
and $q\in(1,\infty)$. So, it suffices to prove that
\begin{align}\label{G-b-bB-A}
\lf\|(b-b_B)\mathbf{1}_{(2B)^c}\mathcal{G}(a)\r\|_{L^{1}(\rn)}\ls \|a\|_{L^{q}_{B}}\|{\mathbf{1}_{B}\|_{L^{\varphi}(\rn)}\|b\|_{\mathrm{BMO}_{\varphi}(\rn)}}.
\end{align}

To consider \eqref{G-b-bB-A},
we denote  $\partial_j=\frac{\partial}{\partial_{x_j}}$($j\in\nn$ and $1\leq j\leq n$) and $\partial_{n+1}=\frac{\partial}{\partial_{t}}$. Moreover, we choose $\sigma$ such that $\varphi$ is of lower type $\sigma$ with $n/(n+1)<\sigma<i(\varphi)$.
Fix $B=B(x_0, r)$. Then, for any $x\in(2^{k+1}B)\setminus  (2^{k}B)$ with $k\in\nn$,
\begin{align*}
[\mathcal{G}(a)(x)]^{2}&= \int_{0}^{\infty}|\nabla P_t*a(x)|^{2}t\,dt\\
&= \int_{0}^{\infty}\left[\sum_{j=1}^n \left| \frac{\partial}{\partial x_j}(P_t*a)(x)\right|^{2}+ \left|\frac{\partial}{\partial t}(P_t*a)(x)\right|^{2}\right]t\,dt\\
&= \sum_{j=1}^{n+1}\int_{0}^{\infty}|(\partial_j P_t)*a(x)|^{2}t\,dt\\
&= \sum_{j=1}^{n+1}\int_{0}^{|x-x_0|/4}|(\partial_j P_t)*a(x)|^{2}t\,dt+ \sum_{j=1}^{n+1}\int_{|x-x_0|/4}^{\infty}|(\partial_j P_t)*a(x)|^{2}t\,dt.
\end{align*}
Further, we again use
$\partial_{i}\partial_{j}P_t(y-\xi_{\theta})
\ls\frac{1}{(t^{2}+\lf|y-\xi_{\theta}\r|^{2})^{(n+2)/2}}$ (see \cite[(11.9)]{Yang-2017})
and obtain
\begin{align*}
|(\partial_j P_t)*a(x)|
&\ls \int_B \frac{1}{(t^{2}+|x-\xi_\theta'|^{2})^{(n+2)/2}}|x_0-y||a(y)|\,dy, 
\end{align*}
where $\xi_\theta'={\theta}y +(1-{\theta})x_{0}\in B(x_0, r)$ since $y\in B(x_0, r)$. Therefore,
\begin{align*}
[\mathcal{G}(a)(x)]^{2}
&\ls\sum_{j=1}^{n+1}\int_{0}^{|x-x_0|/4}\lf(\int_B \frac{1}{(t^{2}+|x-\xi_\theta'|^{2})^{(n+2)/2}}|x_0-y||a(y)|\,dy\r)^{2}t\,dt\\
&\qquad+ \sum_{j=1}^{n+1}\int_{|x-x_0|/4}^{\infty}\lf(\int_B \frac{1}{(t^{2}+|x-\xi_\theta'|^{2})^{(n+2)/2}}|x_0-y||a(y)|\,dy\r)^{2}t\,dt\\
&=:{\rm J}_{1}+{\rm J}_{2}.
\end{align*}
To estimate ${\rm J}_{1}$, by $\xi_\theta'={\theta}y +(1-{\theta})x_{0}\in B(x_0, r)$,
we see that for any $x\in(2B)^c$,
$$|x-\xi_\theta'|\geq |x-x_0|-|x_0-\xi_\theta'|\geq |x-x_0|-r\geq |x-x_0|-|x-x_0|/2=|x-x_0|/2.$$
Then, by \eqref{Ba-BAQ}, we have
\begin{align*}
{\rm J}_{1}
&\leq \sum_{j=1}^{n+1}\int_{0}^{|x-x_0|/4}\lf(\int_B \frac{1}{(|x-\xi_\theta'|^{2})^{(n+2)/2}}|x_0-y||a(y)|\,dy\r)^{2}t\,dt\\
&\ls \sum_{j=1}^{n+1}\int_{0}^{|x-x_0|/4}\lf(|x-x_0|^{-(n+2)}r \int_B |a(y)|\,dy\r)^{2}t\,dt\\
&\ls \sum_{j=1}^{n+1}\int_{0}^{|x-x_0|/4}\lf(|x-x_0|^{-(n+2)}r^{n+1}
\|a\|_{L^{q}_{B}}\r)^{2}t\,dt\\
&\ls (n+1)|x-x_0|^{-2(n+2)}r^{2n+2}\|a\|_{L^{q}_{B}}^{2}\int_{0}^{|x-x_0|/4}t\,dt\\
&\ls (n+1)\lf(\frac{r}{|x-x_0|}\r)^{2(n+1)}\|a\|_{L^{q}_{B}}^{2}.
\end{align*}
To estimate ${\rm J}_{2}$, using \eqref{Ba-BAQ}, it follows that
\begin{align*}
{\rm J}_{2}
&\leq \sum_{j=1}^{n+1}\int_{|x-x_0|/4}^{\infty}\lf(\int_B \frac{1}{t^{n+2}}|x_0-y||a(y)|\,dy\r)^{2}t\,dt\\
&\leq \sum_{j=1}^{n+1}\int_{|x-x_0|/4}^{\infty}
t^{-2(n+2)}r^{2}\lf(\int_B|a(y)|\,dy\r)^{2}t\,dt\\
&\ls (n+1)r^{2}r^{2n}\|a\|_{L^{q}_{B}}^{2}\int_{|x-x_0|/4}^{\infty}t^{-(2n+3)}\,dt\\
&\ls (n+1)\lf(\frac{r}{|x-x_0|}\r)^{2(n+1)}\|a\|_{L^{q}_{B}}^{2}.
\end{align*}
Combining the estimates of ${\rm J}_{1}$ and ${\rm J}_{2}$,
we conclude that for any $x\in(2^{k+1}B)\setminus (2^{k}B)$,
\begin{align*}
[\mathcal{G}(a)(x)]^{2}
\ls {\rm J}_{1}+{\rm J}_{2}
\ls \lf(\frac{r}{|x-x_0|}\r)^{2(n+1)}\|a\|_{L^{q}_{B}}^{2}
\ls 2^{-2k(n+1)}\|a\|_{L^{q}_{B}}^{2},
\end{align*}
which implies
$$\mathcal{G}(a)(x)\ls 2^{-k(n+1)}\|a\|_{L^{q}_{B}}.$$
Then
\begin{align*}
&\lf\|(b-b_B)\mathbf{1}_{(2B)^c}\mathcal{G}(a)\r\|_{L^{1}(\rn)}\\
&\quad\leq \sum_{k=1}^{\infty}\int_{(2^{k+1}B)\setminus(2^{k}B)}|b(x)-b_B|\lf|\mathcal{G}(a)(x)\r|\,dx\\
&\quad\ls \|a\|_{L^{q}_{B}}\sum_{k=1}^{\infty}2^{-k(n+1)}
\lf[\int_{2^{k+1}B}|b(x)-b_{2^{k+1}B}|
\,dx+|2^{k+1}B||b_B-b_{2^{k+1}B}|\r].
\end{align*}
By repeating the proof of \eqref{eq.2.18}, we immediately obtain
\begin{align*}
\lf\|(b-b_B)\mathbf{1}_{(2B)^c}\mathcal{G}(a)\r\|_{L^{1}(\rn)}
\ls \|a\|_{L^{q}_{B}}\|{\mathbf 1}_{B}\|_{L^{\varphi}(\mathbb{R}^n)}
\|b\|_{\mathrm{BMO}_{\varphi}(\rn)}.
\end{align*}
So, \eqref{G-b-bB-A} holds. We finish the proof of Proposition \ref{proposition-g-func}.
\end{proof}

\begin{proposition}\label{proposition-mar}
Let $\varphi$ be a growth function satisfying $n/(n+1)<i(\varphi)\leq I(\varphi)<1$.
Suppose that $\Omega\in\mathrm{Din}^{p}_{\beta}$ with $p\in(1,\,\infty]$ and $\beta\in(0,1)$. Set
$$\epsilon=\min\lf\{\beta, \ \frac{1}{2}\left(1-\frac{1}{p}\right)\r\}.$$
If the critical lower type index $i(\varphi)>n/(n+\epsilon)$, then
Marcinkiewicz integral $\mu_\Omega\in \mathcal{K}_{\varphi}$.
\end{proposition}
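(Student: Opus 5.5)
We follow the scheme of Propositions \ref{proposition-Larea-S} and \ref{proposition-g-func}. Properties (i) and (ii) of Definition \ref{def2.1} come from the literature. Since $\Omega\in\mathrm{Din}^{p}_{\beta}$ with $p>1$ and $\beta>0$, the function $\Omega$ satisfies the $L^1$-Dini condition. Hence \cite[Theorem~1]{Dingyong-2002} gives $\mu_\Omega:H^1(\rn)\to L^1(\rn)$. The weak type $(1,1)$ bound and the $L^q(\rn)$ boundedness for $q\in(1,\infty)$ follow from \cite{Stein-1958, Benedek-1962}; for rough kernels in $L^p(S^{n-1})$ one may instead cite the known weak $(1,1)$ results for Dini-type $\Omega$. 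For (iii), fix a $(\varphi,q,0)$-atom $a$ supported in $B=B(x_0,r)$ and split as before:
\[
\|(b-b_B)\mu_\Omega(a)\|_{L^1}\le \|(b-b_B)\mathbf 1_{2B}\mu_\Omega(a)\|_{L^1}+\|(b-b_B)\mathbf 1_{(2B)^c}\mu_\Omega(a)\|_{L^1}.
\]
The local term is controlled by Lemma \ref{lemma2.5} together with the $L^q$ boundedness of $\mu_\Omega$.

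The core is a pointwise decay estimate off $2B$. We aim to show that
\[
\mu_\Omega(a)(x)\lesssim \Big(\frac{r}{|x-x_0|}\Big)^{n+\epsilon}\|a\|_{L^q_B}
\]
for $x\in(2^{k+1}B)\setminus(2^kB)$. If a pointwise bound fails, we would settle for the averaged version
\[
\Big(\int_{2^{k+1}B\setminus 2^kB}|\mu_\Omega(a)|^{p'}\Big)^{1/p'}\lesssim 2^{-k\epsilon}|2^kB|^{1/p'-1}r^n\|a\|_{L^q_B}.
\]
This averaged form is then paired with \eqref{eq.B-1} (with exponent $p$) through H\"older's inequality.

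To prove it, write $\mu_\Omega(a)(x)^2=\int_0^\infty|F_t(x)|^2t^{-3}dt$ with $F_t(x)=\int_{|x-y|\le t}\Omega(x-y)|x-y|^{1-n}a(y)\,dy$. Since $|x-y|\approx d:=|x-x_0|$ for $y\in B$, we have $F_t(x)=0$ when $t<d/2$. We then split the $t$-integral into $d/2\le t\le 2d$, where the region $\{|x-y|\le t\}$ cuts through $B$, and $t>2d$, where $B$ lies entirely inside the region. For $t>2d$ we use $\int a=0$ to subtract $\Omega(x-x_0)|x-x_0|^{1-n}$. We then estimate
\[
\Big|\frac{\Omega(x-y)}{|x-y|^{n-1}}-\frac{\Omega(x-x_0)}{|x-x_0|^{n-1}}\Big|,
\]
which involves $r/d$ from the power and the modulus $\omega_{p,r/d}(\Omega)$ from the angular part. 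The latter is handled by the standard Dini-type lemma of Ding--Lu--Xue: the $L^p$ norm over the annulus $|x-x_0|\sim 2^kr$ of this difference is $\lesssim |2^kB|^{1/p}(2^kr)^{1-n}\big(2^{-k}+\omega_{p,2^{-k}}(\Omega)\big)$, and $\sum_k 2^{k\beta}\omega_{p,2^{-k}}<\infty$ by $\mathrm{Din}^p_\beta$. The resulting factor $t^{-3}$ integrates to $d^{-2}$. The band $d/2\le t\le2d$ gives no cancellation. Here we bound $|F_t|\le\int_{B\cap\{|x-y|\le t\}}|\Omega||x-y|^{1-n}|a|$ and use Minkowski's inequality to bring the $t$-integral inside. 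This yields $\int_B |\Omega(x-y)||x-y|^{1-n}|a(y)|\big|\,|x-y|^{-2}-(2d)^{-2}\big|^{1/2}dy$, and $\big||x-y|^{-2}-(2d)^{-2}\big|^{1/2}\lesssim d^{-1}(r/d)^{1/2}$ on the relevant set. H\"older in $y$ with $\Omega\in L^p(S^{n-1})$ then produces decay $(r/d)^{1/2}$ times a loss tied to $1/p'$. This is exactly where the exponent $\tfrac12(1-\tfrac1p)$ in $\epsilon$ originates.

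Summing over $k$ as in \eqref{eq.2.18} leaves
\[
\sum_k 2^{-k(n+\epsilon)}2^{(k+1)n/\sigma}\|\mathbf 1_B\|_{L^\varphi}\|b\|_{\mathrm{BMO}_\varphi}\|a\|_{L^q_B},
\]
where we choose $\sigma\in(n/(n+\epsilon),i(\varphi))$. This sum converges precisely because $n(1/\sigma-1)<\epsilon$, which is where the hypothesis $i(\varphi)>n/(n+\epsilon)$ enters. The main obstacle is the band $d/2\le t\le 2d$ with a rough kernel in $L^p(S^{n-1})$: obtaining decay $2^{-k\epsilon}$ there with $\epsilon\le\frac12(1-1/p)$ requires careful H\"older bookkeeping between the $p$-integrability of $\Omega$ and the $q$-integrability of $a$. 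For $q<p'$ we may instead need to work with averaged rather than pointwise bounds. I would first attempt the averaged $L^{p'}$-on-annuli version, since it combines directly with \eqref{eq.B-1}.
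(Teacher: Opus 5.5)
Your handling of (i), (ii) and of the local term via Lemma \ref{lemma2.5} is fine, but the off-diagonal estimate has two genuine gaps.

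\textbf{First gap: the cut at $2d$.} Cutting the $t$-integral at $2d$ destroys the gain you need in the non-cancellative band. For $y\in B$ and $x\notin 2B$ one has $d-r\le |x-y|\le d+r\le 3d/2$. Hence $\bigl(|x-y|^{-2}-(2d)^{-2}\bigr)^{1/2}\ge d^{-1}(4/9-1/4)^{1/2}$, and your claimed bound $\ls d^{-1}(r/d)^{1/2}$ is false. Without that factor the band yields only $(r/d)^{n}$-type decay, even for bounded $\Omega$. Then $\sum_k 2^{-kn}2^{kn/\sigma}$ diverges because $\sigma<1$. The fix, which the paper uses, is to cut at $t=|x-x_0|+r$. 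For every $t\ge |x-x_0|+r$ the whole ball already lies in $\{y:|x-y|\le t\}$, so $\int a=0$ can be used from there on. The remaining band then has length $\le 2r$, and Minkowski gives $\bigl(\int_{|x-y|}^{|x-x_0|+r}t^{-3}\,dt\bigr)^{1/2}\ls r^{1/2}|x-y|^{-3/2}$.

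\textbf{Second gap: the exponent pairing.} The step you flag as the main obstacle is left open, and the pairing you propose is backwards. A pointwise bound is unavailable for $p<\infty$. An $L^{p'}$ bound for $\mu_\Omega(a)$ on annuli does not follow from $\Omega\in L^p(S^{n-1})$ when $1<p<2$: Minkowski in $y$ controls only $L^s(dx)$ norms with $s\le p$. Indeed, the Dini-type lemma you quote is itself an $L^p(dx)$ estimate.

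The missing idea is never to isolate $\mu_\Omega(a)$. Keep $|b(x)-b_B|$ inside the $x$-integral. After Minkowski in $t$ and $y$, apply H\"older in $x$ with the kernel factor in $L^p$ and $b-b_B$ in $L^{p'}$. This costs nothing, since \eqref{eq.B-1} holds for every finite exponent. With $A_k=(2^{k+1}B)\setminus(2^kB)$, the problem reduces to bounding
\[
r^{1/2}\int_B|a(y)|\int_{(2B)^c}\frac{|\Omega(x-y)|\,|b(x)-b_B|}{|x-y|^{n+1/2}}\,dx\,dy
\]
and
\[
r^{-1}\int_B|a(y)|\sum_{k\ge1}2^{-k}\Bigl\|\frac{\Omega(\cdot-y)}{|\cdot-y|^{n-1}}-\frac{\Omega(\cdot-x_0)}{|\cdot-x_0|^{n-1}}\Bigr\|_{L^p(A_k)}\|b-b_B\|_{L^{p'}(2^{k+1}B)}\,dy.
\]
In the second term, the Dini lemma, $\mathrm{Din}^p_\beta$, \eqref{eq.B-1} and \eqref{eq.B-2} give the factor $2^{-k(n+\beta-n/\sigma)}$. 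The atom enters only through $\int_B|a|\ls r^n\|a\|_{L^q_B}$, so your $q$ versus $p'$ worry disappears.

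This is exactly the paper's $\mathrm{U}_1$/$\mathrm{U}_2$ argument. The exponent $\frac12(1-\frac1p)$ in $\epsilon$ comes from applying H\"older over all of $(2B)^c$ at once. Doing it annulus by annulus in the first term even gives decay $2^{-k(n+\frac12-\frac{n}{\sigma})}$.
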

\begin{proof}
Three fundamental boundedness properties of Marcinkiewicz integral $\mu_\Omega$ have been well documented in the existing literature: the $H^{1}(\rn)$ to $L^{1}(\rn)$ boundedness is derived from \cite[Theorem~1]{Dingyong-2002}, the boundedness from $L^{1}(\rn)$ to $L^{1,\,\infty}(\rn)$ is proved in Benedek \cite{Benedek-1962}, and the full range $L^{q}(\rn)$ boundedness for all $q\in(1,\infty)$ is established in the pioneering work Benedek \cite{Benedek-1962}. As a consequence, to show that Marcinkiewicz integral $\mu_\Omega$ belongs to the class defined in Definition \ref{def2.1}, we only need to verify the third condition (iii) therein. Similar to the arguments that $\mathbf{S}$ and $\mathcal{G}$ satisfy Definition \ref{def2.1}(iii), by
Lemma \ref{lemma2.5}, we have
\begin{align*}
\|(b-b_{B})\mu_\Omega(a)\|_{L^{1}(\mathbb{R}^{n})}
&\ls \|a\|_{L^{q}_{B}}\|{\mathbf{1}_{B}\|_{L^{\varphi}(\rn)}
\|b\|_{\mathrm{BMO}_{\varphi}(\rn)}}
+\|(b-b_B)\mathbf{1}_{(2B)^c} \mu_\Omega(a)\|_{L^{1}(\rn)}, 
\end{align*}
where $a$ is a $(\varphi,q,0)$-atom with $\operatorname{supp}a \subset B=B(x_0, r)$ and $q\in(1,\infty)$.
Hence, it remains to prove that
\begin{align}\label{b-bbOmega}
\|(b-b_B)\mathbf{1}_{(2B)^c}\mu_\Omega(a)\|_{L^{1}(\rn)}\ls \|a\|_{L^{q}_{B}}\|{\mathbf{1}_{B}\|_{L^{\varphi}(\rn)}
\|b\|_{\mathrm{BMO}_{\varphi}(\rn)}}.
\end{align}

In order to prove \eqref{b-bbOmega}, by the condition $i(\varphi)>n/(n+\epsilon)$ with $\epsilon=\min\{\beta,\,\frac{1}{2}(1-1/p)\}$, we choose $\sigma$ such that $\varphi$ is of lower type $\sigma$ satisfying $n/(n+\epsilon)<\sigma<i(\varphi)$. So
\begin{align}\label{eq.2.20}
n\left(1/\sigma-1\right)<\epsilon.
\end{align}
Write
\begin{align*}
&\|(b-b_B)\mathbf{1}_{(2B)^c} \mu_\Omega(a)\|_{L^{1}(\rn)}\\
&\quad\leq\int_{(2B)^c}|b(x)-b_B|\left(\int_0^{|x-x_0|+r}\left|\int_{|x-y|\leq t}\frac{\Omega(x-y)}{|x-y|^{n-1}}a(y)\,dy\right|^{2}\frac{dt}{t^3}\right)^{1/2}\,dx\\
&\qquad+\int_{(2B)^c}|b(x)-b_B|\left(\int_{|x-x_0|+r}^{\infty}\left|\int_{|x-y|\leq t}\frac{\Omega(x-y)}{|x-y|^{n-1}}a(y)\,dy\right|^{2}\frac{dt}{t^3}\right)^{1/2}\,dx\\
&\quad=:{\rm U_{1}}+{\rm U_{2}}.
\end{align*}

First, we estimate ${\rm U_{1}}$. From the arguments of \cite[p.174]{Hyy-Whx-2023}, it follows that
\begin{align*}
{\rm U_{1}}
&=\int_{(2B)^c}|b(x)-b_B|\left(\int_0^{|x-x_0|+r}\left|\int_{|x-y|\leq t}\frac{\Omega(x-y)}{|x-y|^{n-1}}a(y)\,dy\right|^{2}\frac{dt}{t^3}\right)^{1/2}\,dx\\
&\ls r^{\frac{1}{2}}\int_{B}\int_{(2B)^c}\frac{\Omega(x-y)}{|x-y|^{n+1/2}}|b(x)-b_B|\,dx
|a(y)|\,dy\\
&\ls r^{\frac{1}{2}}\int_{B}\lf[\lf(\int_{(2B)^c}\frac{|\Omega(x-y)|^{p}}{|x-y|^{n+1/2}}\,dx\r)^{1/p}
\lf(\int_{(2B)^c}\frac{|b(x)-b_B|^{p'}}{|x-y|^{n+1/2}}\,dx\r)^{1/p'}
\r]|a(y)|\,dy\\
&\ls r^{\frac{1}{2}-\frac{1}{2p}}\left(\int_{(2B)^{c}}
\frac{|b(x)-b_B|^{p'}}{|x-y|^{n+1/2}}\,dx\right)^{1/p'}\int_B |a(y)|\,dy,
\end{align*}
where the last step from \cite[p.174]{Hyy-Whx-2023}, that is,
$$\int_{(2B)^c}\frac{|\Omega(x-y)|^{p}}{|x-y|^{n+1/2}}\,dx\ls r^{-\frac{1}{2}}.$$
Further, for $y\in B=B(x_0, r)$ and $x\in(2B)^{c}$, we deduce that $|x-x_0|\geq 2r$ and $|y-x_0|\leq r$, which implies
$$|x-y|\geq|x-x_0|-|y-x_0|\geq|x-x_0|-r\geq\frac{|x-x_0|}{2}$$
and
$$|x-y|\leq|x-x_0|+|y-x_0|\leq|x-x_0|+r\leq\frac{3|x-x_0|}{2}.$$
Then
\begin{align*}
|x-y|\approx|x-x_0|.
\end{align*}
By this, \eqref{Ba-BAQ}, \eqref{eq.B-1}, \eqref{eq.B-2} and \eqref{eq.simga-k1}, we see that
\begin{align*}
{\rm U_{1}}
&\ls r^{\frac{1}{2}-\frac{1}{2p}}\left(\int_{(2B)^{c}}
\frac{|b(x)-b_B|^{p'}}{|x-x_0|^{n+1/2}}\,dx\right)^{1/p'}\int_B |a(y)|\,dy\\
&\ls r^{\frac{1}{2}-\frac{1}{2p}}
r^{n}\|a\|_{L^{q}_{B}}\sum_{k=1}^{\infty}\left(\int_{(2^{k+1}B)\setminus(2^{k}B)}
\frac{|b(x)-b_B|^{p'}}{|x-x_0|^{n+1/2}}\,dx\right)^{1/p'}\\
&\leq r^{\frac{1}{2}-\frac{1}{2p}}
r^{n}r^{-(n+1/2)\frac{1}{p'}}\|a\|_{L^{q}_{B}}
\sum_{k=1}^{\infty}2^{-k(n+1/2)\frac{1}{p'}}\times\Bigg[\left(\int_{2^{k+1}B}
|b(x)-b_{2^{k+1}B}|^{p'}\,dx\right)^{1/p'}\\
&\qquad+|2^{k+1}B|^{1/p'}
|b_B-b_{2^{k+1}B}|\Bigg]\\
&\ls r^{\frac{n}{p}}\|a\|_{L^{q}_{B}}
\sum_{k=1}^{\infty}2^{-k(n+1/2)\frac{1}{p'}}\times\Bigg[|2^{k+1}B|^{1/p'-1}\|{\mathbf 1}_{2^{k+1}B}\|_{L^{\varphi}(\mathbb{R}^n)}\|b\|_{\mathrm{BMO}_{\varphi}(\rn)} \\
&\qquad + |2^{k+1}B|^{1/p'}|2^{k+1}B|^{-1}2^{\frac{(k+1)n}{\sigma}}\|{\mathbf 1}_{B}\|_{L^{\varphi}(\mathbb{R}^n)}\|b\|_{\mathrm{BMO}_{\varphi}(\rn)}\Bigg]\\
&\ls\|a\|_{L^{q}_{B}}\|{\mathbf 1}_{B}\|_{L^{\varphi}(\mathbb{R}^n)}\|b\|_{\mathrm{BMO}_{\varphi}(\rn)}
\sum_{k=1}^{\infty}2^{-k(n+1/2)\frac{1}{p'}-(k+1)\frac{n}{p}+\frac{(k+1)n}{\sigma}}\\
&\approx\|a\|_{L^{q}_{B}}\|{\mathbf 1}_{B}\|_{L^{\varphi}(\mathbb{R}^n)}\|b\|_{\mathrm{BMO}_{\varphi}(\rn)}
\sum_{k=1}^{\infty}2^{-k(n+\frac{1}{2}(1-1/p)-n/\sigma)}.
\end{align*}

Now, we begin estimate ${\rm U_{2}}$. By the discussion of \cite[p.175]{Hyy-Whx-2023}, we have
\begin{align*}
{\rm U_{2}}
&=\int_{(2B)^c}|b(x)-b_B|\left(\int_{|x-x_0|+r}^{\infty}\left|\int_{|x-y|\leq t}\frac{\Omega(x-y)}{|x-y|^{n-1}}a(y)\,dy\right|^{2}\frac{dt}{t^3}\right)^{1/2}\,dx\\
&\ls \int_B |a(y)|
\sum_{k=1}^{\infty}\int_{(2^{k+1}B)\setminus(2^{k}B)}\left|\frac{\Omega(x-y)}{|x-y|^{n-1}}-\frac{\Omega(x-x_0)}{|x-x_0|^{n-1}}
\right|\times\frac{|b(x)-b_B|}{2^{k}r}\,dx\,dy\\
&\ls r^{-1}\int_B |a(y)|
\sum_{k=1}^{\infty}2^{-k}\left(\int_{(2^{k+1}B)\setminus(2^{k}B)}
\left|\frac{\Omega(x-y)}{|x-y|^{n-1}}
-\frac{\Omega(x-x_0)}{|x-x_0|^{n-1}}\right|^{p}\,dx\right)^{1/p}\,dy\\
&\qquad \times \left(\int_{(2^{k+1}B)\setminus(2^{k}B)}|b(x)-b_B|^{p'}\,dx\right)^{1/p'}.
\end{align*}
Note that we recall \cite[Lemma~2.4]{Hyy-Whx-2023} and obtain
\begin{align*}
&\left(\int_{(2^{k+1}B)\setminus(2^{k}B)}
\left|\frac{\Omega(x-y)}{|x-y|^{n-1}}-\frac{\Omega(x-x_0)}{|x-x_0|^{n-1}}\right|^{p}\,dx\right)^{1/p}\\
&\quad \ls (2^{k}r)^{n/p-n+1}\lf(\frac{|y-x_0|}{2^{k}r}+\int_{|y-x_0|/(2^{k+1}r)}^{|y-x_0|/(2^{k}r)}
\frac{\omega_{p, \, \delta}(\Omega)}{\delta}\,d\delta\r).
\end{align*}
Therefore, by the estimate above, \eqref{Ba-BAQ}, \eqref{eq.B-1}, \eqref{eq.B-2} and \eqref{eq.simga-k1}, it follows that
\begin{align*}
{\rm U_{2}}
&\ls r^{-1}\int_B |a(y)|
\sum_{k=1}^{\infty}2^{-k}(2^{k}r)^{n/p-n+1}\lf(\frac{|y-x_0|}{2^{k}r}
+\int_{|y-x_0|/(2^{k+1}r)}^{|y-x_0|/(2^{k}r)}
\frac{\omega_{p, \, \delta}(\Omega)}{\delta}\,d\delta\r)\,dy\\
&\qquad \times \left(\int_{(2^{k+1}B)\setminus(2^{k}B)}|b(x)-b_B|^{p'}\,dx\right)^{1/p'}\\
&\ls r^{-1}\int_B |a(y)|\,dy
\sum_{k=1}^{\infty}2^{-k}(2^{k}r)^{n/p-n+1}\lf(2^{-k}+2^{-k\beta}
\int_{|y-x_0|/(2^{k+1}r)}^{|y-x_0|/(2^{k}r)}
\frac{\omega_{p, \, \delta}(\Omega)}{\delta^{1+\beta}}\,d\delta\r)\\
&\qquad \times \lf[\left(\int_{2^{k+1}B}|b(x)-b_{2^{k+1}B}|^{p'}\,dx\right)^{1/p'}
+|2^{k+1}B|^{1/p'}|b_{B}-b_{2^{k+1}B}|\r]\\
&\ls r^{-1}r^{n}\|a\|_{L^{q}_{B}}
\sum_{k=1}^{\infty}2^{-k}(2^{k}r)^{n/p-n+1}2^{-k\beta}
\left(1+\int_{0}^{1}\frac{\omega_{p, \, \delta}(\Omega)}{\delta^{1+\beta}}\,d\delta\right)\\
&\qquad \times
\left[|2^{k+1}B|^{1/p'-1}2^{\frac{(k+1)n}{\sigma}}\|{\mathbf 1}_{B}\|_{L^{\varphi}(\mathbb{R}^n)}\|b\|_{\mathrm{BMO}_{\varphi}(\rn)}\right]\\
&\approx\|a\|_{L^{q}_{B}}\|{\mathbf 1}_{B}\|_{L^{\varphi}(\mathbb{R}^n)}
\|b\|_{\mathrm{BMO}_{\varphi}(\rn)}\left(1+\int_{0}^{1}\frac{\omega_{p, \, \delta}(\Omega)}{\delta^{1+\beta}}\,d\delta\right)
\sum_{k=1}^{\infty}2^{-k(n+\beta-n/\sigma)},
\end{align*}
where the second step used $|y-x_0|\leq r$ and
$$\frac{2^{-k}}{\delta}>\frac{2^{-k}2^{k}r}{|y-x_0|}>1.$$
Further, we apply $L^p$-Dini condition of order $\beta$ to derive
\begin{align*}
{\rm U_{2}}
\ls\|a\|_{L^{q}_{B}}\|{\mathbf 1}_{B}\|_{L^{\varphi}(\mathbb{R}^n)}
\|b\|_{\mathrm{BMO}_{\varphi}(\rn)}
\sum_{k=1}^{\infty}2^{-k(n+\beta-n/\sigma)}.
\end{align*}

Finally, combining the estimates of ${\rm U_{1}}$ and ${\rm U_{2}}$, we derive
that
\begin{align*}
{\rm U_{1}}+{\rm U_{2}}
&\ls \|a\|_{L^{q}_{B}}\|{\mathbf 1}_{B}\|_{L^{\varphi}(\mathbb{R}^n)}\|b\|_{\mathrm{BMO}_{\varphi}(\rn)}
\left(\sum_{k=1}^{\infty}2^{-k(n+\frac{1}{2}(1-1/p)-n/\sigma)}
+\sum_{k=1}^{\infty}2^{-k(n+\beta-n/\sigma)}\right)\\
&\ls \|a\|_{L^{q}_{B}}\|{\mathbf 1}_{B}\|_{L^{\varphi}(\mathbb{R}^n)}\|b\|_{\mathrm{BMO}_{\varphi}(\rn)}\sum_{k=1}^{\infty}2^{-k(n+\epsilon-n/\sigma)}.
\end{align*}
According to $n+\epsilon-n/\sigma=\epsilon-n(1/\sigma-1)$ and \eqref{eq.2.20}, we conclude that
\begin{align*}
\|(b-b_B)\mathbf{1}_{(2B)^c} \mu_\Omega(a)\|_{L^{1}(\rn)}
\leq {\rm U_{1}}+{\rm U_{2}}
\ls \|a\|_{L^{q}_{B}}\|{\mathbf 1}_{B}\|_{L^{\varphi}(\mathbb{R}^n)}\|b\|_{\mathrm{BMO}_{\varphi}(\rn)}.
\end{align*}
Then \eqref{b-bbOmega} holds. So, $\mu_\Omega\in \mathcal{K}_{\varphi}$.
\end{proof}

\begin{proposition}\label{proposition-BR}
Let $0<R<\infty$ and $\varphi$ be a growth function.
For $\delta>(n-1)/2$ and some $\delta_0$ satisfying
$$(n-1)/2<\delta_0<\min\{\delta,\,(n+1)/2\},$$
if the critical lower type index $i(\varphi)\in(2n/(n+1+2\delta_0), 1]$, then Bochner-Riesz mean operator $T_R^{\delta}\in\mathcal{K}_{\varphi}$ and Bochner-Riesz maximal operator $T_{*}^{\delta}\in \mathcal{K}_{\varphi}$.
\end{proposition}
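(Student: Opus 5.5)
We follow the same scheme as in Propositions \ref{proposition-Larea-S}--\ref{proposition-mar}: verify (i) and (ii) of Definition \ref{def2.1} from known results, and reduce (iii) to an off-support estimate via Lemma \ref{lemma2.5}.

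\textbf{Properties (i) and (ii), and $L^q$ boundedness.} Since $\delta>(n-1)/2$, the kernel $\phi$ is integrable. Hence $T_R^{\delta}$ and $T_*^{\delta}$ are bounded on every $L^p(\rn)$, $p\in[1,\infty]$ (\cite[p.389]{Stein-Ana-2}).
\begin{itemize}
\item Boundedness from $L^1$ to $L^{1,\infty}$ follows trivially from $L^1$ boundedness.
\item Boundedness from $H^1$ to $L^1$ follows from $H^1(\rn)\subset L^1(\rn)$.
\end{itemize}
In particular both operators are bounded on $L^q(\rn)$ for $q\in(1,\infty)$. So Lemma \ref{lemma2.5} applies, and exactly as before it suffices to prove
$$\|(b-b_B)\mathbf{1}_{(2B)^c}\mathfrak{T}(a)\|_{L^1(\rn)}\ls\|a\|_{L^q_B}\|\mathbf{1}_B\|_{L^\varphi(\rn)}\|b\|_{\mathrm{BMO}_\varphi(\rn)}$$
for a $(\varphi,q,0)$-atom $a$ supported in $B=B(x_0,r)$, with $\mathfrak{T}\in\{T_R^\delta,T_*^\delta\}$.

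\textbf{Kernel decay.} The key is a pointwise decay bound for $\mathfrak{T}a$ off $2B$. Choose $p_0\in(0,1)$ with $\delta_0=n/p_0-(n+1)/2$, that is, $n/p_0=\delta_0+(n+1)/2$. By \eqref{Sato-2} applied to the kernel of order $\delta_0$, $\phi$ and $\nabla\phi$ decay like $(1+|x|)^{-n/p_0}$. Since $\delta>\delta_0$, the kernel for $\delta$ decays at least as fast. I would justify this either by the classical asymptotics $|D^\alpha\phi(x)|\ls(1+|x|)^{-(n+1)/2-\delta}$, or by monotonicity in $\delta$ of the Bessel-function asymptotics.

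Set $\gamma:=n/p_0-n=\delta_0-(n-1)/2$; since $\delta_0<(n+1)/2$, we have $\gamma\in(0,1)$. Write $\varepsilon=1/R$. Using the vanishing mean of $a$ and the mean value theorem,
$$|\phi_\varepsilon*a(x)|\le\int_B|\phi_\varepsilon(x-y)-\phi_\varepsilon(x-x_0)||a(y)|\,dy.$$
We bound the difference in two ways. The gradient bound gives $\ls r\varepsilon^{-n-1}(1+|x-x_0|/\varepsilon)^{-n/p_0}$. The size bound gives $\ls\varepsilon^{-n}(1+|x-x_0|/\varepsilon)^{-n/p_0}$. Interpolating, i.e.\ taking $\min\{1,r/\varepsilon\}\le(r/\varepsilon)^{\gamma}$ and using $|x-x_0|\ge 2r$, each case yields
$$|\phi_\varepsilon(x-y)-\phi_\varepsilon(x-x_0)|\ls \frac{r^{\gamma}}{|x-x_0|^{n+\gamma}}$$
uniformly in $\varepsilon>0$. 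The uniformity in $\varepsilon$ gives the same bound for $T_*^\delta a$. With \eqref{Ba-BAQ} this gives
$$\mathfrak{T}a(x)\ls (r/|x-x_0|)^{n+\gamma}\|a\|_{L^q_B}\ls 2^{-k(n+\gamma)}\|a\|_{L^q_B}\quad\text{on }2^{k+1}B\setminus2^kB.$$
This uniform-in-$\varepsilon$ bound is the main obstacle. I would check the case split carefully: for $\varepsilon\le r$ use the size bound, and for $\varepsilon>r$ use the gradient bound, which gives $r\varepsilon^{\gamma-1}|x-x_0|^{-n-\gamma}\le r^{\gamma}|x-x_0|^{-n-\gamma}$ because $\gamma<1$.

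\textbf{Summation.} Repeat \eqref{eq.2.18} using \eqref{eq.B-1}, \eqref{eq.B-2} and \eqref{eq.simga-k1} with a lower type $\sigma<i(\varphi)$. This gives the series $\sum_k2^{-k(\gamma-n(1/\sigma-1))}$. It converges precisely when $n(1/\sigma-1)<\gamma$, i.e.\ $\sigma>n/(n+\gamma)=2n/(n+1+2\delta_0)$. Such a $\sigma$ can be chosen because $i(\varphi)>2n/(n+1+2\delta_0)$, which is exactly the hypothesis. This establishes (iii) for both $T_R^\delta$ and $T_*^\delta$.
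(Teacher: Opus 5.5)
Your verification of (iii) is essentially the paper's proof. You use Lemma \ref{lemma2.5} on $2B$ and the same case split: $1/R>r$ (cancellation plus the mean value theorem with the gradient bound) versus $1/R\le r$ (size bound only). You get the same $R$-uniform decay $(r/|x-x_0|)^{\delta_0+\frac{n+1}{2}}$ off $2B$ (your $n+\gamma$), which also covers $T_*^{\delta}$, and the same summation threshold $\sigma>2n/(n+1+2\delta_0)$. The detour through the kernel of order $\delta_0$ is unnecessary. Applying \eqref{Sato-2} with $n/p=\delta+\frac{n+1}{2}$ (admissible since $\delta>(n-1)/2$ forces $p<1$) gives $|D^\alpha\phi(x)|\ls(1+|x|)^{-\delta-\frac{n+1}{2}}\le(1+|x|)^{-\delta_0-\frac{n+1}{2}}$ directly, which is what the paper does.

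The step that fails is your derivation of (i) and (ii) for $T_*^{\delta}$. Integrability of $\phi$ does give $L^1$-boundedness of each $T_R^{\delta}$, uniformly in $R$, so for $T_R^{\delta}$ your argument is correct and more elementary than the paper's citation. It does not make the maximal operator bounded on $L^1$, and in fact $T_*^{\delta}$ is not. The kernel $\phi$ is real and continuous with $\phi(0)=\int_{\rn}(1-|\xi|^2)_+^{\delta}\,d\xi>0$. Take $0\le f\in L^1(\rn)$ supported in $B(\vec{\mathbf{0}},1)$ with $\int f=1$. For $|x|\ge 2$ and $R=(K|x|)^{-1}$ with $K$ large, $\phi(R(x-y))\ge\phi(0)/2$ for all $y\in\operatorname{supp}f$. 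Hence $T_*^{\delta}f(x)\ge T_R^{\delta}f(x)\gs |x|^{-n}$, which is not integrable. So neither \emph{$L^1$ implies $L^{1,\infty}$} nor \emph{$H^1\subset L^1$} proves anything for $T_*^{\delta}$.

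The repair is as follows.
\begin{itemize}
\item For (ii) and the $L^q$ bounds, note that $\phi$ has the integrable radially decreasing majorant $C(1+|x|)^{-\delta-\frac{n+1}{2}}$. Hence $T_*^{\delta}f\ls Mf$, with $M$ the Hardy--Littlewood maximal operator. This gives weak type $(1,1)$ and the $L^q$ bounds, $q\in(1,\infty)$, needed for Lemma \ref{lemma2.5}.
\item For (i) you need a genuine $H^1$ argument. For a classical $H^1$-atom $a$ supported in $B(x_0,r)$, control $T_*^{\delta}a$ on $2B$ by $L^2$-boundedness. On $(2B)^c$, use your own uniform bound $\sup_{\varepsilon>0}|\phi_\varepsilon*a(x)|\ls r^{\gamma}|x-x_0|^{-n-\gamma}$. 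Then pass to infinite atomic sums via the weak type $(1,1)$ bound, or simply cite \cite{P.-1981, Tai-Weiss-1980} as the paper does.
\end{itemize}
The paper's Introduction makes the same incorrect $p=1$ claim for $T_*^{\delta}$. However, its proof of this proposition only uses weak type $(1,1)$, $L^q$ for $q\in(1,\infty)$, and the cited $H^1\to L^1$ bound.
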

\begin{proof}
We first collect three known boundedness properties of $T^{\delta}_R$ and $T_{*}^{\delta}$ from the existing literature: From \cite{P.-1981, Tai-Weiss-1980}, it follows that $T^{\delta}_R$ and $T_{*}^{\delta}$ map $H^{1}(\rn)$ boundedly into $L^{1}(\rn)$;
by \cite[p.389]{Stein-Ana-2}, they are bounded from $L^{1}(\rn)$ to $L^{1,\,\infty}(\rn)$  and they admit bounded action on every $L^{q}(\rn)$ with $q\in(1,\infty)$. These three properties already cover conditions (i) and (ii) in Definition \ref{def2.1}, so the only task left for the subsequent argument is to verify the third condition (iii).
By following a discussion similar to $\mathbf{S}$, $\mathcal{G}$ and $\mu_\Omega$,
we use Lemma \ref{lemma2.5} to obtain
\begin{align*}
\|(b-b_{B})T_R^{\delta}(a)\|_{L^{1}(\mathbb{R}^{n})}
&\ls \|a\|_{L^{q}_{B}}\|{\mathbf{1}_{B}\|_{L^{\varphi}(\rn)}
\|b\|_{\mathrm{BMO}_{\varphi}(\rn)}}
+\|(b-b_B)\mathbf{1}_{(2B)^c}T_R^{\delta}(a)\|_{L^{1}(\rn)}
\end{align*}
and
\begin{align*}
\|(b-b_{B})T_{*}^{\delta}(a)\|_{L^{1}(\mathbb{R}^{n})}
&\ls \|a\|_{L^{q}_{B}}\|{\mathbf{1}_{B}\|_{L^{\varphi}(\rn)}
\|b\|_{\mathrm{BMO}_{\varphi}(\rn)}}
+\|(b-b_B)\mathbf{1}_{(2B)^c}T_{*}^{\delta}(a)\|_{L^{1}(\rn)},
\end{align*}
where $b\in\mathrm{BMO}_{\varphi}(\rn)$ and $a$ is a $(\varphi,q,0)$-atom with $\operatorname{supp}a \subset B$ and $q\in(1,\infty)$.
So, it suffices to prove that
\begin{align}\label{eq-b-bB-bR-t1}
\|(b-b_B)\mathbf{1}_{(2B)^c}T_R^{\delta}(a)\|_{L^{1}(\rn)}\ls \|a\|_{L^{q}_{B}}\|{\mathbf{1}_{B}\|_{L^{\varphi}(\rn)}\|b\|_{\mathrm{BMO}_{\varphi}(\rn)}}
\end{align}
and
\begin{align}\label{eq-b-bB-bR-t2}
\|(b-b_B)\mathbf{1}_{(2B)^c}T_{*}^{\delta}(a)\|_{L^{1}(\rn)}\ls \|a\|_{L^{q}_{B}}\|{\mathbf{1}_{B}\|_{L^{\varphi}(\rn)}\|b\|_{\mathrm{BMO}_{\varphi}(\rn)}}.
\end{align}

Now, we verify \eqref{eq-b-bB-bR-t1} and \eqref{eq-b-bB-bR-t2}. Fix $B=B(x_0, r)$. By the condition $i(\varphi)>2n/(n+1+2\delta_0)$, we pick $\sigma$ such that $\varphi$ is of lower type $\sigma$ satisfying
\begin{align}\label{eq.2.31}
2n/(n+1+2\delta_0)<\sigma<i(\varphi).
\end{align}
If we claim
\begin{align}\label{eq.2.32}
|T_R^{\delta}(a)(x)|\ls 2^{-k(\delta_0+\frac{n+1}{2})}\|a\|_{L^{q}_{B}},
\end{align}
then, by \eqref{eq.B-1}, \eqref{eq.B-2}, \eqref{eq.simga-k1} and \eqref{eq.2.32}, we see that
\begin{align}\label{eq-bbB-TR13}
&\|(b-b_B)\mathbf{1}_{(2B)^c}T_R^{\delta}(a)\|_{L^{1}(\rn)}\\
&\quad\leq \sum_{k=1}^{\infty}\int_{(2^{k+1}B)\setminus(2^{k}B)}|b(x)-b_B||T_R^{\delta}(a)(x)|\,dx\notag\\
&\quad\ls \|a\|_{L^{q}_{B}}\sum_{k=1}^{\infty}2^{-k(\delta_0+\frac{n+1}{2})}
\lf(\int_{2^{k+1}B}|b(x)-b_{2^{k+1}B}|\,dx
+|2^{k+1}B||b_B-b_{2^{k+1}B}|\r)\notag\\
&\quad\ls \|a\|_{L^{q}_{B}}\sum_{k=1}^{\infty}2^{-k(\delta_0+\frac{n+1}{2})}\left(\|{\mathbf 1}_{2^{k+1}B}\|_{L^{\varphi}(\mathbb{R}^n)}
\|b\|_{\mathrm{BMO}_{\varphi}(\rn)}+
2^{\frac{(k+1)n}{\sigma}}\|b\|_{\mathrm{BMO}_{\varphi}(\rn)}\|{\mathbf 1}_{B}\|_{L^{\varphi}(\mathbb{R}^n)}\right)\notag\\
&\quad\ls \|a\|_{L^{q}_{B}}\|{\mathbf 1}_{B}\|_{L^{\varphi}(\mathbb{R}^n)}\|b\|_{\mathrm{BMO}_{\varphi}(\rn)} \sum_{k=1}^{\infty}2^{-k\left(\delta_0+\frac{n+1}{2}-\frac{n}{\sigma}\right)}\notag\\
&\quad\ls \|a\|_{L^{q}_{B}}\|{\mathbf 1}_{B}\|_{L^{\varphi}(\mathbb{R}^n)}\|b\|_{\mathrm{BMO}_{\varphi}(\rn)}\notag,
\end{align}
where the last step derives from $\delta_0+\frac{n+1}{2}-\frac{n}{\sigma}>0$
since the assumption \eqref{eq.2.31}. Thus, \eqref{eq-b-bB-bR-t1} holds.
Moreover, by using \eqref{eq.2.32} and repeating the argument of \eqref{eq-bbB-TR13},
it follows that
\begin{align*}
|T_{*}^{\delta}(a)(x)|\ls 2^{-k(\delta_0+\frac{n+1}{2})}\|a\|_{L^{q}_{B}}
\end{align*}
and \eqref{eq-b-bB-bR-t2} holds.

It remains to show \eqref{eq.2.32}.
For ball $B=B(x_0, r)$, we divide the two cases $1/R>r$ and $0<1/R<r$ to discuss \eqref{eq.2.32}.

\medskip

{\it Case 1: $1/R>r$.}
For $y\in B=B(x_0, r)$ and $x\in(2^{k+1}B)\setminus(2^{k}B)$, there exists $\theta\in (0,1)$ and a point $\xi_y=\theta(x-y)+(1-\theta)(x-x_0)$ lying on the line segment joining $x-y$ and $x-x_0$, such that
\begin{align*}
|T_R^{\delta}(a)(x)|
&=\left|\int_B a(y)\phi_{1/R}(x-y)\,dy\right|\\
&=\left|\int_B a(y)(\phi_{1/R}(x-y)-\phi_{1/R}(x-x_0))\,dy\right|\\
&\leq \int_B |a(y)||(\phi_{1/R}(x-y)-\phi_{1/R}(x-x_0))|\,dy\\
&=\int_B |a(y)||y-x_0||\nabla\phi_{1/R}(\xi_y)|\,dy.
\end{align*}
From the definition $\phi_{1/R}(\xi_y)=R^{n}\phi(R\xi_y)$, it follows that
\begin{align*}
\nabla\phi_{1/R}(\xi_y)&=\nabla(R^{n}\phi(R\xi_y))
=R^{n}\cdot\nabla[\phi(R\xi_y)]
=R^{n+1}(\nabla\phi)(R\xi_y).
\end{align*}
Then, by \eqref{Sato-2}, \eqref{Ba-BAQ}, $1/R>r$ and
\begin{align*}
|\xi_y|&=|\theta(x-y)+(1-\theta)(x-x_0)|\\
&=|(x-x_0)-\theta(y-x_0)|
\geq |x-x_0|-|y-x_0|
\geq \frac{|x-x_0|}{2}
\geq \frac{2^{k}r}{2}
\quad \text{with} \ y\in B(x_0, r),
\end{align*}
we have
\begin{align*}
|T_R^{\delta}(a)(x)|
&\ls rR^{n+1}(1+R|\xi_y|)^{-(\delta+\frac{n+1}{2})}\int_B |a(y)|\,dy\\
&\ls rR^{n+1}\left(1+R\frac{2^{k}r}{2}\right)^{-\left(\delta_0+\frac{n+1}{2}\right)}
r^{n}\|a\|_{L^{q}_{B}}\\
&\ls 2^{-k\left(\delta_0+\frac{n+1}{2}\right)}r^{\frac{n+1}{2}-\delta_0}
R^{\frac{n+1}{2}-\delta_0}\|a\|_{L^{q}_{B}}\\
&\ls 2^{-k\left(\delta_0+\frac{n+1}{2}\right)}r^{\frac{n+1}{2}-\delta_0}(1/r)^{\frac{n+1}{2}-\delta_0}
\|a\|_{L^{q}_{B}}\\
&\approx 2^{-k\left(\delta_0+\frac{n+1}{2}\right)}\|a\|_{L^{q}_{B}}.
\end{align*}
This implies \eqref{eq.2.32} in the case $1/R>r$.

\medskip

{\it Case 2: $0<1/R\leq r$.}
Due to $y\in B=B(x_0, r)$ and $x\in(2^{k+1}B)\setminus(2^{k}B)$,
it follows from \eqref{Sato-2} that
$$|x-y|\geq|x-x_0|-|y-x_0|\geq |x-x_0|-r\geq\frac{|x-x_0|}{2}\geq \frac{2^{k}r}{2} ,$$
by this, \eqref{Sato-2} and \eqref{Ba-BAQ}, we immediately obtain that
\begin{align*}
|T_R^{\delta}(a)(x)|
&\leq \int_B |a(y)||\phi_{1/R}(x-y)|\,dy\\
&\ls R^{n}\int_B |a(y)|\frac{1}{(1+R|x-y|)^{\delta+\frac{n+1}{2}}}\,dy\\
&\ls R^{n}\int_B |a(y)|\frac{1}{(1+\frac{R}{2}2^{k}r)^{\delta_0+\frac{n+1}{2}}}\,dy\\
&\ls R^{n-\left(\delta_0+\frac{n+1}{2}\right)}\lf(\frac{R}{2}2^{k}r\r)^{-\delta_{0}-\frac{n+1}{2}}
\int_B |a(y)|\,dy\\
&\ls 2^{-k\left(\delta_0+\frac{n+1}{2}\right)}
R^{\frac{n-1}{2}-\delta_0}r^{-\left(\delta_0+\frac{n+1}{2}\right)}r^{n}\|a\|_{L^{q}_{B}}\\
&\approx 2^{-k\left(\delta_0+\frac{n+1}{2}\right)}(Rr)^{\frac{n-1}{2}-\delta_0}\|a\|_{L^{q}_{B}}\\
&\ls 2^{-k\left(\delta_0+\frac{n+1}{2}\right)}\|a\|_{L^{q}_{B}},
\end{align*}
where the last step used $0<1/R\leq r$ and $\frac{n-1}{2}-\delta_0<0$.
This implies \eqref{eq.2.32} in the case $0<1/R\leq r$.
We finish the proof of Proposition \ref{proposition-BR}.
\end{proof}

\section{Proofs of Theorems \ref{main-Thm-1}, \ref{Theo-main-b-2}
and \ref{Theo-main-b-3}${\rm(i)}$-${\rm(ii)}$}\label{Sec-3}
The main purpose of this section is to prove Theorems \ref{Theo-main-c-1} and \ref{Theo-main-c-2} by using \cite[Lemma~5.8 and Theorem~5.10]{F-L2024},
which implies Theorems \ref{main-Thm-1}, \ref{Theo-main-b-2} and \ref{Theo-main-b-3}(i)-(ii).

\begin{Theorem}\label{Theo-main-c-1}
Let $\varphi$ be a growth function satisfying $0<i(\varphi)\leq I(\varphi)<1$ and \eqref{eq.1.5}.
Denote $\widetilde{\mathcal{T}}=\{\mathbf{S},\,\mathcal{G},\,\mu_\Omega,\,T_R^{\delta},\, T_*^{\delta}\}$, with each operator satisfying the corresponding hypotheses, 
detailed in Theorems \ref{main-Thm-1}, \ref{Theo-main-b-2} and \ref{Theo-main-b-3}.
If $b\in\mathrm{BMO}_{\varphi}(\rn)$ and $\mathfrak{T}\in\widetilde{\mathcal{T}}$,
then, there exists a constant $C>0$ such that for any $h\in H^{\varphi}_{b}(\rn)$,
$$\|[b, \mathfrak{T}]h\|_{L^{1}(\rn)}\leq C \|h\|_{H_b^{\varphi}(\rn)}.$$
\end{Theorem}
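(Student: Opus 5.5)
The plan is to reduce Theorem~\ref{Theo-main-c-1} to the abstract commutator theorem for the class $\mathcal{K}_{\varphi}$ in \cite[Lemma~5.8 and Theorem~5.10]{F-L2024}. Under the hypotheses $0<i(\varphi)\le I(\varphi)<1$ and \eqref{eq.1.5}, these results assert that every $\mathfrak{T}\in\mathcal{K}_{\varphi}$ satisfies $\|[b,\mathfrak{T}]h\|_{L^1(\rn)}\ls\|h\|_{H^\varphi_b(\rn)}$ for $h\in H^\varphi_b(\rn)$. So the first step is membership. Propositions~\ref{proposition-Larea-S}, \ref{proposition-g-func}, \ref{proposition-mar} and \ref{proposition-BR} show that $\mathbf{S},\mathcal{G}\in\mathcal{K}_\varphi$ when $n/(n+1)<i(\varphi)$, that $\mu_\Omega\in\mathcal{K}_\varphi$ when $n/(n+\epsilon)<i(\varphi)$, and that $T^\delta_R,T^\delta_*\in\mathcal{K}_\varphi$ when $2n/(n+1+2\delta_0)<i(\varphi)$. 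These are exactly the lower-type hypotheses imposed in Theorems~\ref{main-Thm-1}, \ref{Theo-main-b-2} and \ref{Theo-main-b-3}. For $\mu_\Omega$, note that $\epsilon<1$ gives $n/(n+\epsilon)>n/(n+1)$, so the hypothesis of Proposition~\ref{proposition-mar} holds as well.

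Next, I will recall how the abstract argument runs, to check that only properties (i)--(iii) of Definition~\ref{def2.1} are used. Using the bilinear decomposition of \cite{F-L2024}, one writes
\[
b\times h=\mathfrak{S}(h,b)+\mathfrak{R}(h,b),
\]
where $\mathfrak{R}(h,b)\in H^1(\rn)$ with norm $\ls\|h\|_{H^\varphi}\|b\|_{\mathrm{BMO}_\varphi}$. The term $\mathfrak{S}(h,b)=\sum_j(b-b_{B_j})a_j$ is built from an atomic decomposition $h=\sum_j a_j$ into $(\varphi,q,0)$-type atoms. Then, pointwise by sublinearity,
\[
|[b,\mathfrak{T}]h|\le \sum_j|(b-b_{B_j})\mathfrak{T}a_j|+|\mathfrak{T}(\mathfrak{R}(h,b))|+|\mathfrak{T}(\mathfrak{S}\text{-remainder})|.
\]
The first sum is controlled in $L^1$ by Definition~\ref{def2.1}(iii) together with $\sum_j\|a_j\|_{L^q_{B_j}}\|\mathbf 1_{B_j}\|_{L^\varphi}\ls\|h\|_{H^\varphi}$. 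The $H^1$ piece is controlled by Definition~\ref{def2.1}(i). The key identity, that the remaining piece equals $-\mathfrak{T}$ applied to something in $H^1$ exactly when $[b,\mathfrak{M}]h\in L^1$, comes from \cite[Theorem~5.10]{F-L2024}: $h\in H^\varphi_b$ if and only if $\mathfrak{S}(h,b)\in H^1$, with $\|\mathfrak{S}(h,b)\|_{H^1}\ls\|h\|_{H^\varphi_b}$. This piece is therefore again handled by (i).

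The main obstacle is that the abstract result in \cite{F-L2024} is phrased for linear Calder\'on--Zygmund operators. For the sublinear operators here, with $[b,\mathfrak{T}]h=\mathfrak{T}((b-b(\cdot))h)$, I must verify that the decomposition passes through $\mathfrak{T}$. The plan is to apply sublinearity to
\[
(b-b(x))h=\sum_j (b-b_{B_j})a_j-(b(x)-b_{B_j})a_j
\]
at fixed $x$. To justify interchanging $\mathfrak{T}$ with the infinite sums, I will use the weak-type (ii) in Definition~\ref{def2.1}. This yields convergence in $L^{1,\infty}$, hence almost everywhere along a subsequence, and Fatou's lemma then gives the $L^1$ bound. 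I expect this limiting step to need the most care, but it follows \cite[Lemma~5.8]{F-L2024} verbatim once (i)--(iii) are in hand.
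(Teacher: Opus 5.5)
This is essentially the paper's proof. You place each operator in $\mathcal{K}_{\varphi}$ via Propositions \ref{proposition-Larea-S}--\ref{proposition-BR} (your remark that $\epsilon<1$ makes Proposition \ref{proposition-mar} applicable is correct), and then invoke \cite[Lemma~5.8 and Theorem~5.10]{F-L2024}, which the paper uses in the form $|[b,\mathfrak{T}]h|\le|\mathfrak{T}(\Pi_{4}(h,b))|+\mathfrak{R}(h,b)$ with $\mathfrak{R}$ bounded into $L^{1}(\rn)$ and $\|\Pi_{4}(h,b)\|_{H^{1}(\rn)}\lesssim\|h\|_{H^{\varphi}_{b}(\rn)}$, so that Definition \ref{def2.1}(i) finishes.

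One caution on your informal sketch: $b\times h-\sum_j(b-b_{B_j})a_j=\sum_j b_{B_j}a_j$ is not bounded in $H^{1}(\rn)$ by $\|h\|_{H^{\varphi}(\rn)}\|b\|_{\mathrm{BMO}_{\varphi}(\rn)}$, and no term $\mathfrak{T}(\sum_j b_{B_j}a_j)$ arises in $[b,\mathfrak{T}]h$. Also, the $H^1$ piece controlled by \cite[Theorem~5.10]{F-L2024} is the wavelet term $\Pi_{4}(h,b)$. Since you rely on the cited lemma rather than on that sketch, the proof is unaffected.
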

\begin{proof}
It follows from \cite[Lemma~5.8 and Theorem~5.10]{F-L2024} that there exists an operator
 \begin{align}\label{hua-R}
\mathfrak{R}: H^{\varphi}(\mathbb{R}^{n})\times\mathrm{BMO}_{\varphi}(\rn)\to L^{1}(\rn)
 \end{align}
 such that
 \begin{align}\label{bhT-pan4}
|[b, \mathfrak{T}]h|\leq |\mathfrak{T}(\Pi_{4}(h,\, b))|+\mathfrak{R}(h,\, b),
 \end{align}
where $b\in\mathrm{BMO}_{\varphi}(\rn)$, $h\in H^{\varphi}_{b}(\rn)$, $\mathfrak{T}\in\widetilde{\mathcal{T}}$ and the term $\Pi_{4}(h,\, b)$, which belongs to $H^{1}(\mathbb{R}^{n})$, is defined as in \cite[(3.3)]{F-L2024} by
\begin{align}\label{pan4-hb}
\Pi_{4}(h,\, b)=\dsum_{I\in\mathcal{D}}\sum_{\lambda\in E}
\langle h,\, \psi^{\lambda}_{I}\rangle\langle b,\, \psi^{\lambda}_{I}\rangle
(\psi^{\lambda}_{I})^{2}.
 \end{align}
For more detailed explanations of the concept $\Pi_{4}(h,\, b)$, we refer readers to \cite[Section~3.1]{F-L2024}, and no redundant elaboration will be presented here.
Then, by \eqref{bhT-pan4}, Propositions
\ref{proposition-Larea-S}-\ref{proposition-g-func}-\ref{proposition-mar}-\ref{proposition-BR} and \cite[Theorem~5.10]{F-L2024}, we see that
\begin{align*}
 \|[b, \mathfrak{T}]h\|_{L^{1}(\rn)}&\leq \|\mathfrak{T}(\Pi_{4}(h,\, b))\|_{L^{1}(\rn)}+\|\mathfrak{R}(h,\, b)\|_{L^{1}(\rn)}\\
 &\ls\|\Pi_{4}(h,\, b)\|_{H^{1}(\rn)}+\|h\|_{H^{\varphi}(\mathbb{R}^{n})}\|b\|_{\mathrm{BMO}_{\varphi}(\rn)}\\
 &\approx \|h\|_{H^{\varphi}_{b}(\mathbb{R}^{n})},
 \end{align*}
 as desired.
\end{proof}

\begin{Theorem}\label{Theo-main-c-2}
Let $\varphi$ and the family of operators $\widetilde{\mathcal{T}}$ be as in Theorem \ref{Theo-main-c-1}. If $b\in\mathrm{BMO}_{\varphi}(\rn)$ and $\mathfrak{T}\in\widetilde{\mathcal{T}}=\{\mathbf{S},\,\mathcal{G},\,\mu_\Omega,\,T_R^{\delta},\, T_*^{\delta}\}$,
then there exists a constant $C>0$ such that for any $h\in H^{\varphi}(\rn)$,
$$\|[b, \mathfrak{T}]h\|_{L^{1,\,\infty}(\rn)}\leq C \|h\|_{H^{\varphi}(\rn)}.$$
\end{Theorem}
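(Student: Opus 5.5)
We follow the scheme of the proof of Theorem \ref{Theo-main-c-1}, this time using the weak-type half of the class $\mathcal{K}_{\varphi}$ in place of the strong $H^1\to L^1$ bound. For $h\in H^{\varphi}(\rn)$ (not necessarily in $H_b^{\varphi}(\rn)$), the bilinear decomposition of \cite[Lemma~5.8 and Theorem~5.10]{F-L2024} still applies. It should give the pointwise domination
\begin{align*}
|[b,\mathfrak{T}]h|\leq |\mathfrak{T}(\Pi_{4}(h,\,b))|+\mathfrak{R}(h,\,b),
\end{align*}
where $\mathfrak{R}$ is the operator in \eqref{hua-R}. Its bound $\|\mathfrak{R}(h,b)\|_{L^1(\rn)}\ls \|h\|_{H^{\varphi}(\rn)}\|b\|_{\mathrm{BMO}_{\varphi}(\rn)}$ requires only $h\in H^{\varphi}(\rn)$, together with the atomic property in Definition \ref{def2.1}(iii). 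That property is verified for every $\mathfrak{T}\in\widetilde{\mathcal{T}}$ in Propositions \ref{proposition-Larea-S}--\ref{proposition-BR}.

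The key new point is the term $\Pi_{4}(h,b)$. It lies in $H^1(\rn)$ only when $h\in H_b^{\varphi}(\rn)$. In general, however, the bilinear estimates of \cite[Section~3]{F-L2024} give
\begin{align*}
\|\Pi_{4}(h,\,b)\|_{L^{1}(\rn)}\ls \|h\|_{H^{\varphi}(\rn)}\|b\|_{\mathrm{BMO}_{\varphi}(\rn)}.
\end{align*}
This holds because $\Pi_4$ is a diagonal paraproduct-type term controlled by the pairing of $H^{\varphi}$ with $\mathrm{BMO}_{\varphi}$ through the wavelet coefficients and $\sum_I|\langle h,\psi_I\rangle\langle b,\psi_I\rangle|\,|I|^{-1}\|\mathbf{1}_I\|_{L^1}$. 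I would extract this $L^1$ bound from the proof in \cite{F-L2024}. Condition (ii) of Definition \ref{def2.1} then yields
\begin{align*}
\|\mathfrak{T}(\Pi_{4}(h,\,b))\|_{L^{1,\infty}(\rn)}\ls \|\Pi_{4}(h,\,b)\|_{L^{1}(\rn)}\ls\|h\|_{H^{\varphi}(\rn)}\|b\|_{\mathrm{BMO}_{\varphi}(\rn)}.
\end{align*}

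Combining the two terms is routine. We use the quasi-triangle inequality $\|f+g\|_{L^{1,\infty}}\le 2(\|f\|_{L^{1,\infty}}+\|g\|_{L^{1,\infty}})$ and the embedding $L^1\subset L^{1,\infty}$, which give the desired bound with constant proportional to $\|b\|_{\mathrm{BMO}_{\varphi}(\rn)}$.

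The main obstacle is justifying the pointwise decomposition for general $h\in H^{\varphi}(\rn)$ and a sublinear $\mathfrak{T}$, in particular the interchange of $\mathfrak{T}$ with the infinite wavelet sums. I would first prove the estimate for finite atomic or wavelet sums (a dense subclass). There sublinearity gives $\mathfrak{T}((b-b(x))h)(x)\le \mathfrak{T}(\Pi_4)(x)+\sum\cdots$ termwise. I would then pass to the limit using the weak-type bound and Fatou's lemma in $L^{1,\infty}$.
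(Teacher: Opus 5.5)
Your proposal is correct and is essentially the paper's own argument. Both use the pointwise domination \eqref{bhT-pan4}, the $H^{\varphi}(\rn)\times\mathrm{BMO}_{\varphi}(\rn)\to L^{1}(\rn)$ bounds for $\Pi_4$ and $\mathfrak{R}$ (which the paper cites directly as \cite[Proposition~3.17 and Lemma~5.8]{F-L2024}, so nothing needs extracting from proofs), the weak type $(1,1)$ bound from Definition \ref{def2.1}(ii), and $L^{1}(\rn)\subset L^{1,\,\infty}(\rn)$. Your extra density-and-Fatou step is more careful than the paper, which applies \eqref{bhT-pan4}---stated there only for $h\in H_b^{\varphi}(\rn)$---to general $h\in H^{\varphi}(\rn)$ without comment.
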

\begin{proof}
From $\mathfrak{T}\in\widetilde{\mathcal{T}}$ and Propositions
\ref{proposition-Larea-S}-\ref{proposition-g-func}-\ref{proposition-mar}-\ref{proposition-BR},
it follows that $\mathfrak{T}$ is bounded from $L^{1}(\rn)$ to $L^{1,\,\infty}(\rn)$. By this, \eqref{bhT-pan4}, $L^{1}(\rn)\subset L^{1,\,\infty}(\rn)$ and the boundedness properties of $\mathfrak{R}$ and $\Pi_{4}$ from $H^{\varphi}(\mathbb{R}^{n})\times\mathrm{BMO}_{\varphi}(\rn)$
to $L^{1}(\rn)$ (see \cite[Proposition~3.17 and Lemma~5.8]{F-L2024}), we conclude that
 \begin{align*}
 \|[b, \mathfrak{T}]h\|_{L^{1,\,\infty}(\rn)}&\ls \|\mathfrak{T}(\Pi_{4}(h,\, b))\|_{L^{1,\,\infty}(\rn)}+\|\mathfrak{R}(h,\, b)\|_{L^{1,\,\infty}(\rn)}\\
 &\ls \|\Pi_{4}(h,\, b)\|_{L^{1}(\rn)}+\|\mathfrak{R}(h,\, b)\|_{L^{1}(\rn)}\\
 &\ls \|h\|_{H^{\varphi}(\mathbb{R}^{n})}\|b\|_{\mathrm{BMO}_{\varphi}(\rn)},
 \end{align*}
 where $\mathfrak{R}$ and $\Pi_{4}$ adopt the definitions given in \eqref{hua-R} and \eqref{pan4-hb}, respectively.
\end{proof}

\section{Proof of Theorem \ref{Theo-main-b-3}{\rm(iii)} and counterexample}
In this section we introduce the definition of $H^{1}(\rn)$-molecular from \cite[p. 71]{Tai-Weiss-1980} firstly, then we complete the proof of the boundedness of the commutator
$$[b, T_R^{\delta}]: \ H_b^{\varphi}(\rn)\longrightarrow H^1(\rn),$$
where $b\in\mathrm{BMO}_{\varphi}(\rn)$ and $T_R^{\delta}$ is the Bochner Riesz mean operator. This implies Theorem \ref{Theo-main-b-3}(iii). 
It is worth noting that the commutators associated with the
Lusin area integral, g-function, Marcinkiewicz integral and Bochner Riesz maximal operator fail to satisfy the aforementioned boundedness.
To explain this observation, we give a counterexample in Theorem \ref{the-ex-4.1},
which relies essentially on Lemmas \ref{lemma4.3} and \ref{lemma4.4}.

\begin{definition}\label{def.4.1}
Let $p\in(1, \infty]$ and $\vec{\nu}=\{\nu_{k}\}_{k\in\nn}$ be a nonnegative sequence satisfying $\sum_{k=1}^{\infty}k\nu_{k}<\infty$.
Given a ball $B\subset\rn$, a measurable function $\omega$ is called a $(1,\, p,\, \vec{\nu})$-molecule relative to $B$ if:
\begin{enumerate}
\item[\rm (i)] $\|\omega{\mathbf 1}_{B}\|_{L^p(\rn)}\leq|B|^{1/p-1}$ on $\rn$;
\item[\rm (ii)] $\|\omega{\mathbf 1}_{(2^kB)\setminus
(2^{k-1}B)}\|_{L^p(\rn)}\leq\nu_{k}|2^{k}B|^{1/p-1}$  for any $k\in\nn$;
\item[\rm (iii)] $\int_{\rn}\omega(x)\,dx=0$.
\end{enumerate}
\end{definition}

%\begin{theorem}\label{Theo-main-c-3}
%Given a growth function $\varphi$ satisfying the conditions of Theorem \ref{Theo-main-b-3}, and let $\delta, \delta_0$ be as in Theorem \ref{Theo-main-b-3}. For any $b\in\mathrm{BMO}_{\varphi}(\rn)$, if $T_R^{\delta}$ satisfies \eqref{eq.2.36},
%then there exists a constant $C>0$ such that for any $h\in H^{\varphi}_{b}(\rn)$, we have
%$$\|[b, T_R^{\delta}]h\|_{H^{1}(\rn)}\leq C \|h\|_{H_b^{\varphi}(\rn)}.$$
%\end{theorem}

\begin{proof}[\bf{Proof of Theorem \ref{Theo-main-b-3}$\rm(iii)$}]
 Assume that $h=\sum_{k=1}^L h_k$, where $h_k$  follows the same definition as in \cite[(5.14)]{F-L2024}) and it is a  multiple of $(\varphi, \infty, 0)$-atom.
Let $\Pi_{j}$ be defined in \cite[(3.3)]{F-L2024} for $j\in\{1, 2, 3, 4\}$.
By replacing the Calder\'{o}n-Zygmund  operator with the Bochner-Riesz means operator $T_R^{\delta}$ in the proof of \cite[Theorem~5.13]{F-L2024}),
we find that if there exists a positive constant $C$ such that
\begin{align}\label{eq-omega-k}
\omega_k=(b-b_{B_{k}})T_R^{\delta}h_{k}\lf(C\|h_{k}\|_{L^\infty_{B_{k}}(\rn)}\|{\mathbf 1}_{B_{k}}\|_{L^{\varphi}(\rn)}\|b\|_{\mathrm{BMO}_{\varphi}(\rn)}\r)^{-1}
\end{align}
is a $(1,\, p,\, \vec{\nu})$-molecule, then we can use
\cite[Proposition~3.15, (5.15) and (5.17)]{F-L2024} to obtain
\begin{align*}
\|\mathfrak{U}(h,\, b)\|_{H^{1}(\mathbb{R}^{n})}
\ls \|h\|_{H^{\varphi}(\mathbb{R}^{n})}\|b\|_{\mathrm{BMO}_{\varphi}(\rn)},
\end{align*}
where
\begin{align*}
\mathfrak{U}(h, b)(x)
=\sum_{k=1}^L(b(x)-b_{B_{k}})T_R^{\delta}h_{k}(x)
-T_R^{\delta}\lf(\sum_{k=1}^L\Pi_{2}(h_{k},\, b-b_{B_{k}})\r)(x)
\quad \text{for a.\,e.} \ x\in\rn.
\end{align*}
This, along with the boundedness of $T_R^{\delta}$ on $H^{1}(\mathbb{R}^{n})$
(see \cite{Stein-Taibleson-1981, P.-1981}) and \cite[Lemma~5.9 and Propositions~3.14, 3.16 and 3.17]{F-L2024}, yields
\begin{align*}
[b, T_R^{\delta}]h
=\mathfrak{U}(h,\, b)-T_R^{\delta}(\Pi_{1}(h,\, b))-T_R^{\delta}(\Pi_{3}(h,\, b))-T_R^{\delta}(\Pi_{4}(h,\, b))
\end{align*}
is bounded from $H^{\varphi}_{b}(\mathbb{R}^{n})\times\mathrm{BMO}_{\varphi}(\rn)$ to $H^1(\rn)$. This implies Theorem \ref{Theo-main-b-3}(iii).

It remains to show that the function $\omega_k$ defined in \eqref{eq-omega-k}
is a $(1,\, p,\, \vec{\nu})$-molecule.
First, for some $p_1\in(p,\infty)$,  by H\"{o}lder inequality,
the boundedness of $T_R^{\delta}$ on $L^{p_1}(\mathbb{R}^{n})$ and \eqref{eq.B-1}
we see that
\begin{align*}
\frac{\left(\int_{B_{k}}|(b(x)-b_{B_{k}})T_R^{\delta}h_{k}(x)|^{p}\,dx\right)^{1/p}}
{\|h_{k}\|_{L^\infty_{B_{k}}(\rn)}\|{\mathbf 1}_{B_{k}}\|_{L^{\varphi}(\rn)}\|b\|_{\mathrm{BMO}_{\varphi}(\rn)}}
&\leq \frac{\|T_R^{\delta}h_{k}\|_{L^{p_1}(\rn)}
\left(\int_{B_{k}}|b(x)-b_{B_{k}}|^{\frac{p_1p}{p_1-p}}\,dx\right)^{\frac{p_1-p}{p_1p}}}
{\|h_{k}\|_{L^\infty_{B_{k}}(\rn)}\|{\mathbf 1}_{B_{k}}\|_{L^{\varphi}(\rn)}\|b\|_{\mathrm{BMO}_{\varphi}(\rn)}}\\
&\ls \frac{\|h_{k}\|_{L^{p_1}(\rn)}|B_{k}|^{\frac{p_1-p}{p_1p}-1}\|{\mathbf 1}_{B_{k}}\|_{L^{\varphi}(\rn)}\|b\|_{\mathrm{BMO}_{\varphi}(\rn)}}
{\|h_{k}\|_{L^\infty_{B_{k}}(\rn)}\|{\mathbf 1}_{B_{k}}\|_{L^{\varphi}(\rn)}\|b\|_{\mathrm{BMO}_{\varphi}(\rn)}}\\
&\ls |B_{k}|^{1/p-1},
\end{align*}
which implies that $\omega_k$ satisfies Definition \ref{def.4.1}(i).
From the assumption \eqref{eq.2.36}, it follows that $\omega_k$ satisfies Definition \ref{def.4.1}(iii).

Below, we only need to verify that $\omega_k$ satisfies Definition \ref{def.4.1}(ii).
According to the condition $2n/(n+1+2\delta_0)<i(\varphi),$
we assume $\varphi$ is of lower type $\sigma$ with
\begin{align}\label{2n-siga-ivar}
2n/(n+1+2\delta_0)<\sigma<i(\varphi).
\end{align}
By this,  \eqref{eq.2.32}, $\|a_{j}\|_{L^{q}_{B}}\leq \|a_{j}\|_{L_{B_{k}}^{\infty}},$ \eqref{eq.B-1}, \eqref{eq.B-2} and \eqref{eq.simga-k1}, we obtain
\begin{align*}
&\left(\int_{(2^{j}B_{k})\setminus  (2^{j-1}B_{k})}|[b(x)-b_{B_{k}}]T_R^{\delta}h_{k}(x)|^{p}\,dx\right)^{1/p}\\
&\quad\ls
2^{-j\left(\delta_0+\frac{n+1}{2}\right)}\|h_{k}\|_{L_{B_{k}}^{p}}
\lf[\left(\int_{2^{j}B_{k}}\lf|b(x)-b_{2^{j}B_{k}}\r|^{p}\,dx\right)^{1/p}
+|2^{j}B_{k}|^{1/p}\lf|b_{2^{j}B_{k}}-b_{B_{k}}\r|\r]\\
&\quad\ls
2^{-j\left(\delta_0+\frac{n+1}{2}\right)}\|h_{k}\|_{L_{B_{k}}^{\infty}}
\|{\mathbf 1}_{2^{j}B_{k}}\|_{L^{\varphi}(\rn)}\|b\|_{\mathrm{BMO}_{\varphi}(\rn)}|2^{j}B_k|^{1/p-1}\\
&\quad\qquad+2^{-j\left(\delta_0+\frac{n+1}{2}\right)}\|h_{k}\|_{L_{B_{k}}^{\infty}}
2^{jn/\sigma}\|{\mathbf 1}_{B_{k}}\|_{L^{\varphi}(\rn)}\|b\|_{\mathrm{BMO}_{\varphi}(\rn)}|2^{j}B_k|^{1/p-1}\\
&\quad\ls
2^{-j\left(\delta_0+\frac{n+1}{2}-\frac{n}{\sigma}\right)}
\|h_{k}\|_{L_{B_{k}}^{\infty}}\|{\mathbf 1}_{B_{k}}\|_{L^{\varphi}(\rn)}\|b\|_{\mathrm{BMO}_{\varphi}(\rn)}|2^{j}B_k|^{1/p-1},
\end{align*}
which implies
$$\|\omega_{k}\mathbf 1_{(2^{j}B_{k})\setminus  (2^{j-1}B_{k})}\|_{L^p(\rn)}\le 2^{-j\left(\delta_0+\frac{n+1}{2}-\frac{n}{\sigma}\right)} |2^{j}B_{k}|^{1/p-1}.$$
Denote $\vec{\nu}=\{\nu_{j}\}_{j\in\nn}$ with $\nu_{j}=2^{-j\left(\delta_0+\frac{n+1}{2}-\frac{n}{\sigma}\right)}$ for $j\in\nn$.
By \eqref{2n-siga-ivar}, it leads to
$$\sum_{j=1}^{\infty}j\nu_{j}\ls 1.$$
Then $\omega_k$ satisfies Definition \ref{def.4.1}(ii). 
This completes the proof of Theorem \ref{Theo-main-b-3}(iii).
\end{proof}

\begin{lemma}\label{lemma4.3}
Let $q\in(1,\infty)$ and $\mathfrak{T}\in\{\mathcal{G}, \mathbf{S}, T^{\delta}_{*}\}$.
Assume that $\varphi$ is a growth function.
Then there exist a nonzero $(\varphi, q, 0)$-atom $a_{0}$ and a ball $U$ such that $U\subset\rn\setminus\operatorname{supp}a_{0}$ and
$$\mathfrak{T}(a_{0})(x)>0 \quad \text{for all} \ x\in U.$$
\end{lemma}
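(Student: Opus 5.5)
Fix the ball $B_0:=B(\vec{\mathbf{0}},1)$ and a nonzero function $g\in C_c^\infty(\rn)$ with $\operatorname{supp} g\subset B_0$ and $\int_{\rn} g=0$. A concrete choice is $g(x)=\eta(x-e_1/2)-\eta(x+e_1/2)$, where $\eta\ge 0$ is a bump supported in $B(\vec{\mathbf{0}},1/4)$. Set $a_0:=c\,g$ with $c>0$ small, so that $\|a_0\|_{L^q_{B_0}}\le\|\mathbf 1_{B_0}\|_{L^\varphi(\rn)}^{-1}$. Then $a_0$ is a nonzero $(\varphi,q,0)$-atom for every growth function $\varphi$, since only the vanishing zeroth moment is required. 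The plan is to find a point $x_*\notin\overline{B_0}$ with $\mathfrak{T}(a_0)(x_*)>0$. We then use lower semicontinuity, or continuity, of $\mathfrak{T}(a_0)$ off the support to get a ball $U\ni x_*$ with $U\subset\rn\setminus\overline{B_0}$ on which $\mathfrak{T}(a_0)>0$.

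\textbf{Case $\mathfrak{T}=\mathcal{G}$ or $\mathbf{S}$.} Write $u(y,t)=P_t*a_0(y)$. This function is smooth on $\rn\times(0,\infty)$, and it is harmonic there because $P_t$ is the Poisson kernel. We have $\mathcal{G}(a_0)(x)=0$ only if $\nabla u(x,t)=0$ for all $t>0$, and $\mathbf{S}(a_0)(x)=0$ only if $\nabla u\equiv0$ on the open cone $\Gamma(x)$. In either case real-analyticity of the harmonic function $u$ forces $u$ to be constant on the whole half-space. Since $u(\cdot,t)\to0$ as $t\to\infty$, that constant is $0$. But $u(\cdot,t)\to a_0$ in $L^1$ as $t\to0$, so $a_0=0$, a contradiction. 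Hence $\mathcal{G}(a_0)(x)>0$ and $\mathbf{S}(a_0)(x)>0$ for every $x\in\rn$, in particular on any ball $U$ disjoint from $\overline{B_0}$ such as $U=B(3e_1,1)$.

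\textbf{Case $\mathfrak{T}=T^\delta_*$.} Here $T_*^\delta(a_0)(x)\ge |T^\delta_R(a_0)(x)|=|a_0*\phi_{1/R}(x)|$ for every $R$. Since $T^\delta_*(a_0)$ is a supremum of the continuous functions $|a_0*\phi_{1/R}|$, it is lower semicontinuous, so positivity at a single point $x_*$ outside $\overline{B_0}$ suffices. Suppose instead $T^\delta_*(a_0)\equiv0$ on a ball $V$ outside $\overline{B_0}$. Then $a_0*\phi_{1/R}$ vanishes on $V$ for all $R$. Each $a_0*\phi_{1/R}$ is real-analytic, because its Fourier transform $\hat a_0(\xi)(1-|\xi|^2/R^2)_+^\delta$ has compact support. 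Hence $a_0*\phi_{1/R}\equiv0$ on $\rn$, which means $\hat a_0(\xi)=0$ for $|\xi|<R$. Letting $R\to\infty$ gives $\hat a_0\equiv0$, contradicting $a_0\neq0$. Therefore on every ball outside the support there is a point $x_*$ with $T^\delta_*(a_0)(x_*)>0$, and lower semicontinuity produces $U$.

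The main obstacle is making the uniqueness arguments rigorous in the case $\mathbf{S}$: one must pass from ``$\nabla u=0$ on the cone $\Gamma(x)$'' to ``$u$ is constant on $\rn\times(0,\infty)$''. This follows from the identity theorem for real-analytic functions on the connected half-space, since $\Gamma(x)$ is a nonempty open subset. A second point to verify is that $\mathcal{G}(a_0)$ vanishing at a single $x$ gives $\nabla u(x,t)=0$ for all $t$ only on the vertical line $\{x\}\times(0,\infty)$, not on an open set. So for $\mathcal{G}$ I would instead argue by contradiction on a whole ball $V$: if $\mathcal{G}(a_0)\equiv 0$ on $V$, then $\nabla u\equiv0$ on the open set $V\times(0,\infty)$, and the same analytic argument applies. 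Lower semicontinuity of $\mathcal{G}(a_0)$, which follows from Fatou's lemma, then yields $U$ as before.
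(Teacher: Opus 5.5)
Your proof is correct. Its skeleton matches the paper's: scale a mean-zero $C_c^\infty$ bump into a $(\varphi,q,0)$-atom, argue by contradiction that $\mathfrak{T}(a_0)$ cannot vanish off the support, then use continuity to spread positivity to a ball. The genuine difference is the uniqueness step.

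\emph{The paper's step.} The paper assumes $\mathcal{G}(a_0)=0$ (resp.\ $T^\delta_*(a_0)=0$) only on $\mathbb{R}^n\setminus\operatorname{supp}a_0$. It then ``takes the Fourier transform'' of $\partial_t u(\cdot,t)$ (resp.\ of $T^\delta_R a_0$). That needs vanishing on all of $\mathbb{R}^n$, and $\operatorname{supp}a_0$ has positive measure, so the step is not justified as written.

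\emph{Your step.} You appeal to real-analyticity: harmonicity of the Poisson extension for $\mathcal{G}$ and $\mathbf{S}$, and, for $T^\delta_*$, the Paley--Wiener fact that $a_0*\phi_{1/R}$ has compactly supported Fourier transform and is therefore entire. This is exactly what upgrades vanishing on an open set to vanishing everywhere, so your route closes that gap.

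\emph{Smaller gains.} For $\mathbf{S}$ you work directly with the open cone $\Gamma(x)$ and get $\mathbf{S}(a_0)>0$ everywhere, instead of borrowing the neighbourhood from the $\mathcal{G}$ case. Your lower semicontinuity argument for $T^\delta_*$ correctly produces a ball around the new point $x_*$. The paper instead reuses the $\eta$ and $U$ of Step~1 at a different point, which does not follow without invoking continuity of $T^\delta_{R_0}a_0$ there. You also get positivity inside any prescribed ball $V$ outside the support, slightly more than the lemma needs.

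One caveat: delete the sentence asserting $\mathcal{G}(a_0)(x)>0$ for every $x\in\mathbb{R}^n$. As you observe in your last paragraph, vanishing of $\nabla u$ on a vertical line does not trigger the identity theorem. For $n\ge 2$ the claim can genuinely fail. Take $a_0(y)=\chi(|y|)\,y_1y_2$ with $\chi\in C_c^\infty((1,2))$. Since $P_t$ is radial, $u$ is odd in $y_1$ and in $y_2$. Parity then forces $\nabla u(\vec{\mathbf{0}},t)=0$ for all $t>0$, so $\mathcal{G}(a_0)(\vec{\mathbf{0}})=0$ even though $\vec{\mathbf{0}}\notin\operatorname{supp}a_0$. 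The correct version for $\mathcal{G}$ is the one in your last paragraph: the contradiction argument on a whole ball $V$, followed by lower semicontinuity via Fatou.
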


\begin{proof}
Choose a nonzero function $a_1\in C^{\infty}_c(\rn)$ such that $\int_{\rn}a_1(x)\,dx=0$. Then there exist $x_{1}\in\rn$ and finite value $L$ such that $\operatorname{supp}a_{1}\subset B=B(x_1, L)$ and $\|\mathbf{1}_{B(x_1, L)}\|_{L^{\varphi}(\rn)}$ is finite value.
Choose $\lambda$ satisfying
$$0<|\lambda|<\min\lf\{1, \ \lf(\|\mathbf{1}_{B(x_1, L)}\|_{L^{\varphi}(\rn)}\|a_1\|_{L^{q}_{B(x_1, L)}}\r)^{-1}\r\},$$
Taking $a_{0}:=\lambda a_1$, it follows that $a_{0}$ is a nonzero $(\varphi, q, 0)$-atom.
We now proceed to prove the target result for $\mathfrak{T}=\mathcal{G}, S, T^{\delta}_{*}$, respectively.

\medskip

{\it Step 1: There exists a ball $U$ such that $U\subset\rn\setminus\operatorname{supp}a_{0}$ and $\mathcal{G}(a_{0})(x)>0$ for all $x\in U$.}
Let $u(x,t)=P_{t}*a_{0}(x)$ with $x\in\rn$ and $t>0$.
We first prove that $\mathcal{G}(a_{0})$ cannot vanish identically on $\rn\setminus \operatorname{supp}a_{0}$. Using the proof by contradiction, we assume that
$$\mathcal{G}(a_{0})(x)=0 \quad \text{for all} \ x\in\rn\setminus \operatorname{supp}a_{0}.$$
Then $[\mathcal{G}(a_{0})]^{2}(x)=\int_{0}^{\infty}|\nabla_{u}(x,t)|^{2}t\,dt=0$, which implies that for any $x\in\rn\setminus \operatorname{supp}a_{0}$ and $ t>0$,
$$|\nabla_{u}(x,t)|=0.$$
Therefore,
$$\partial_{t}u(x,t)=0 \quad \text{for all} \ x\in\rn\setminus \operatorname{supp}a_{0} \ \text{and} \ t>0.$$
Thus, taking Fourier transform and obtaining
 $$-2\pi|\xi|e^{-2\pi t|\xi|}\widehat{a_{0}}(\xi)=0,$$
 hence,
 $$\widehat{a_{0}}(\xi)=0 \quad \text{for all} \ \xi\neq \vec{\mathbf{0}}.$$
In addition, by $\int_{\rn}a_{0}(x)\,dx=0$, we have $\widehat{a_{0}}(\vec{\mathbf{0}})=0$.
Then $\widehat{a_{0}}\equiv 0$, and hence $a_{0}\equiv 0$, which contradicts the earlier conclusion that $a_{0}$ is nonzero.
Thus, there exists $x_0\in\rn\setminus \operatorname{supp}a_{0}$ such that
$$\mathcal{G}(a_{0})(x_0)>0.$$
Consequently, there exists $t_0>0$ such that
$|\nabla_{u}(x_0,t_0)|>0.$
By continuity, there exists $\eta>0$ such that for any $|y-x_0|<\eta$ and $|t-t_0|<\eta$, $$|\nabla_{u}(y,t)|>0.$$
Taking $\eta$ smaller if necessary, we can also assume that $B(x_0, \eta)\cap \operatorname{supp}a_{0}=\emptyset$ and $0<\eta<t_0$. Set $U:=B(x_0, \eta/4)$, then $U\subset \rn\setminus \operatorname{supp}a_{0}$.
Thus,
  $$\mathcal{G}(a_{0})(x)=\lf(\int_{0}^{\infty}|\nabla_{u}(x,t)|^{2}t\,dt\r)^{1/2}>0 \quad \text{for all} \ x\in U.$$

  \medskip

{\it Step 2: Verifying $\mathbf{S}(a_{0})(x)>0$ for all $x\in U$.} We continue to adopt the parameters $\eta>0$, $t_0>0$  and ball $U:=B(x_0, \eta/4)$ selected in {\it Step 1}, which satisfy $0<\eta<t_0$ and
$U\subset \rn\setminus \operatorname{supp}a_{0}$.
For any $x, y\in U=B(x_0, \eta/4)$ and $t\in(t_0-\eta/2, t_0+\eta/2)$, we have
$$|x-y|\leq |x-x_0|+|y-x_0|<\eta/2<t.$$
This implies  the set
$$\lf\{(y, t): \, y\in U=B(x_0, \eta/4) \ \text{and} \ t\in(t_0-\eta/2, t_0+\eta/2)\r\}$$
is contained in the cone $\Gamma(x)$. Then $|\nabla u_{a_{0}}(y,t)|>0$ for all $y\in U=B(x_0, \eta/4)$ and $t\in(t_0-\eta/2, t_0+\eta/2)$, hence,
$$\mathbf{S}(a_{0})(x)>0 \quad \text{for all} \  x\in U.$$

\medskip

{\it Step 3: Verifying $T^{\delta}_{*}(a_{0})(x)>0$ for all $x\in U$.}
Using the contradiction argument, we suppose that
$$T^{\delta}_{*}(a_{0})(x)=0 \quad \text{for all} \ x\in\rn\setminus\operatorname{supp}a_{0}.$$
Then for every $R>0$,
$$T^{\delta}_R(a_{0})(x)=0 \quad \text{for all} \ x\in\rn\setminus\operatorname{supp}a_{0}.$$
So, taking Fourier transform on the two sides of the preceding formula, we obtain 
$$\left(1-\frac{|\xi|^2}{R^2}\right)^{\delta}_{+}\widehat{a_{0}}(\xi)=0.$$
Thus,
$$\widehat{a_{0}}(\xi)=0 \quad \text{for all} \  |\xi|<R.$$
By the arbitrary property of $R$, we obtain $\widehat{a_{0}}\equiv 0$. Hence $a_{0}=0$, which contradicts the aforementioned conclusion that $a_{0}$ is nonzero.
So, there exists $x_0\in \rn\setminus \operatorname{supp}a_{0}$ such that $T^{\delta}_{*}a_{0}(x_0)>0$. From the
definition of $T^{\delta}_{*}$, it follows that there exists $R_0>0$ such that
$$|T^{\delta}_{R_0}a_{0}(x_0)|>0.$$
We keep using the previously chosen parameter $\eta>0$  and ball $U:=B(x_0, \eta/4)$ from {\it Step 1}, with the constraint $U\subset \rn\setminus \operatorname{supp}a_{0}$.
Thus,
$$|T^{\delta}_{R_0}a_{0}(x)|>0 \quad \text{for all} \  x\in U.$$
Consequently,
$$T^{\delta}_{*}a_{0}(x)>0 \quad \text{for all} \  x\in U.$$

Combining the discussions from {\it Step 1} to {\it Step 3}, we obtain the desired conclusion.
\end{proof}

\begin{lemma}\label{lemma4.4}
Let $q\in(1,\infty)$ and $\varphi$ be a growth function.
Assume that the function $\Omega\in L^1(S^{n-1})$ satisfies
$$\int_{S^{n-1}}\Omega(\theta)\,d\sigma(\theta)=0 \quad \text{and} \quad  \Omega\not\equiv0.$$
Then there exist a nonzero $(\varphi, q, 0)$-atom $a_{0}$ and a ball $U$ such that $U\subset\rn\setminus\operatorname{supp}a_{0}$ and
$$\mu_\Omega(a_{0})(x)>0 \quad \text{for all} \  x\in U.$$
\end{lemma}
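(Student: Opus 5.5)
The plan is to construct $a_0$ explicitly and to study $\mu_\Omega(a_0)$ only through the large-$t$ part of its defining integral, where it reduces to a convolution with a homogeneous kernel. Set $K(z):=\Omega(z')|z|^{1-n}$ and $F_t(x):=\int_{|x-y|\le t}K(x-y)a_0(y)\,dy$, so that $[\mu_\Omega(a_0)(x)]^2=\int_0^\infty|F_t(x)|^2\,t^{-3}\,dt$. Take $a_0=\lambda a_1$ with $a_1\in C_c^\infty(\rn)$ of integral zero, supported in $B(x_1,L)$, and choose $\lambda$ exactly as in the proof of Lemma \ref{lemma4.3}, so that $a_0$ is a nonzero $(\varphi,q,0)$-atom. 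The new ingredient is the specific choice $a_1=\partial_j\psi$, where $\psi\in C_c^\infty$ is radial with $\hat\psi(\vec{\mathbf 0})\neq0$, and $j$ is fixed below.

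\emph{Reduction to a convolution.} Fix $x\notin B(x_1,L)$. For every $t>|x-x_1|+L$ the whole support of $a_0$ lies in $\{|x-y|\le t\}$, so $F_t(x)=(K*a_0)(x)$, independently of $t$. Hence
\[
[\mu_\Omega(a_0)(x)]^2\ \ge\ |(K*a_0)(x)|^2\int_{|x-x_1|+L}^\infty t^{-3}\,dt,
\]
and it suffices to find one point $x_0\notin\overline{B(x_1,L)}$ with $(K*a_0)(x_0)\neq0$. Since $K\in L^1_{\rm loc}$ and $a_0$ is smooth with compact support, $K*a_0$ is continuous. Therefore it is nonzero on a small ball $U$ around $x_0$, and $U$ can be shrunk to avoid $\operatorname{supp}a_0$.

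\emph{Core step (main obstacle), $n\ge2$.} Suppose, for contradiction, that $K*a_0\equiv0$ off $B(x_1,L)$. Then $K*a_0$ is a compactly supported distribution, so its Fourier transform is real-analytic (indeed entire). On the other hand, $K$ is homogeneous of degree $1-n$, so $\hat K(\xi)=|\xi|^{-1}m(\xi')$ with $m\not\equiv0$; this uses $\Omega\not\equiv0$ and the injectivity of the Fourier transform on $\mathcal S'$. With $a_1=\partial_j\psi$ we get
\[
\widehat{K*a_0}(\xi)=2\pi i\lambda\,\frac{\xi_j}{|\xi|}\,m(\xi')\,\hat\psi(\xi).
\]
Near $\vec{\mathbf 0}$ its lowest-order homogeneous part is $c\,\xi_j|\xi|^{-1}m(\xi')$ with $c\neq0$, which has degree $0$. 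The degree-$0$ term of a Taylor expansion is a constant, so $\xi_j'm(\xi')$ must be constant on $S^{n-1}$. Running the argument with $j=1$ and $j=2$: if both choices failed, then $\xi_1'm=c_1$ and $\xi_2'm=c_2$ on the sphere. Evaluating on the set $\xi_1'=0$ forces $c_1=0$, hence $m=0$ wherever $\xi_1'\neq0$, i.e.\ almost everywhere, contradicting $m\not\equiv0$. So at least one of $j\in\{1,2\}$ works. I expect the delicate points to be (a) justifying that $m$ is a genuine function nonvanishing on a set of positive measure, perhaps after a mild regularization of $\Omega$, and (b) the Taylor-expansion argument when $m$ is merely measurable; I would try to phrase it by testing against dilations $\xi\mapsto\epsilon\xi$ as $\epsilon\to0$.

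\emph{Case $n=1$.} Here the reduction above gives nothing: $K(z)=\Omega(1)\,\mathrm{sign}(z)$ with $\Omega(1)\neq0$, so $K*a_0$ vanishes to the right of the support because $\int a_0=0$. Instead I would use intermediate $t$. For $x>x_1+L$ we have $F_t(x)=\Omega(1)\int_{x-t}^{\infty}a_0(y)\,dy$. Since $a_0\not\equiv0$, the function $s\mapsto\int_s^\infty a_0$ is continuous and not identically zero, so $F_t(x)\neq0$ for $t$ in an open interval. Hence $\mu_\Omega(a_0)(x)>0$ for every $x>x_1+L$, and any ball $U$ in that half-line works.
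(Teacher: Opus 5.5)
Your route differs from the paper's, and two parts of it are sound: the large-$t$ reduction to $K*a_0$, and the $n=1$ argument. The gap is in the $n\ge2$ core step, exactly at your point (a). You treat $\hat K$ as $|\xi|^{-1}m(\xi')$ with $m$ a function on $S^{n-1}$. For $\Omega$ merely in $L^1(S^{n-1})$ this is false in general: $\hat K$ is only a homogeneous distribution of degree $-1$. Already for $n=2$ its angular part is a combination of rotates of $\Omega$ and of its conjugate function $\widetilde\Omega$, which need not be integrable on $S^1$. Moreover, even when $m$ is a function, $\xi_1'm(\xi')=c_1$ holds only almost everywhere on $S^{n-1}$, so ``evaluating on the set $\xi_1'=0$'' (a null set) proves nothing.

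Both problems disappear if you stay in $\mathcal S'(\rn)$. Suppose $\widehat{K*\partial_j\psi}=2\pi i\,\xi_j\hat\psi\,\hat K$ is entire. Pair it with $\epsilon^{-n}\eta(\cdot/\epsilon)$ and let $\epsilon\to0$. Since $\xi_j\hat K$ is homogeneous of degree $0$ and $\hat\psi(\vec{\mathbf 0})\neq0$, you get $\xi_j\hat K=c_j$ in $\mathcal S'$. For $j=1,2$ this gives $c_1\xi_2=\xi_1\xi_2\hat K=c_2\xi_1$, so $c_1=c_2=0$ and $\partial_1K=0$ in $\mathcal S'$. Hence $K(x)=k(x_2,\dots,x_n)$ a.e., with $k$ homogeneous of degree $1-n$ on $\rr^{n-1}$. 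Then $\int_{\{|x_1|<1/2,\,|(x_2,\dots,x_n)|<1/2\}}|K(x)|\,dx=\int_{S^{n-2}}|k|\,d\sigma\int_0^{1/2}r^{-1}\,dr$ is infinite unless $k=0$. But $K\in L^1_{\rm loc}(\rn)$ (indeed $\int_{|x|<1}|K|=\|\Omega\|_{L^1(S^{n-1})}$), so $\Omega=0$ a.e., the desired contradiction. This needs no information on whether $m$ is a function.

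Even repaired, your argument is considerably heavier than the paper's, which uses an intermediate window of $t$ instead of $t\to\infty$. The paper picks $\psi\in C_c^\infty$ supported in an annulus $\{r_1<|z|<r_2\}$ with $\int K\psi\neq0$; this is possible simply because $K\not\equiv0$ on $\rn\setminus\{\vec{\mathbf 0}\}$. It then subtracts a bump $\phi$ of equal integral supported in $\{|y|>3r_2\}$ and sets $a_1(y)=\psi(-y)-\phi(y)$. For $x$ near $\vec{\mathbf 0}$ and $t\in[2r_2,2r_2+\delta]$, the ball $\{|x-y|\le t\}$ contains the $\psi$-piece and misses the $\phi$-piece. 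So $F_t(x)=\lambda\int K(x-y)\psi(-y)\,dy$, a continuous function of $x$ that is nonzero at $\vec{\mathbf 0}$ by construction.

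This is elementary and uniform in $n\ge1$; your $n=1$ argument is the same idea. Your large-$t$ reduction buys $t$-independence of $F_t$. The price is that you need global non-vanishing of $K*a_0$ off the support, which is precisely what demands the homogeneity and Fourier machinery, and which fails outright when $n=1$.
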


\begin{proof}
Denote $K(z)=\frac{\Omega(z)}{|z|^{n-1}}$ for $z\in\rn\setminus\{\vec{\mathbf{0}}\}$.
Thus, by the condition $\Omega\in L^1(S^{n-1})$ and $\Omega\not\equiv 0$, we have $K\in L_{loc}^{1}(\rn\setminus\{\vec{\mathbf{0}}\})$ and $K\not\equiv 0$.
Then there exists a function
$\psi\in C_c^{\infty}(\rn\setminus\{\vec{\mathbf{0}}\})$ such that
$$\operatorname{supp}\psi\subset\{z\in\rn:\ 0<r_1<|z|<r_2<\infty\}$$
and
\begin{align}\label{Kz-neq0}
\int_{\rn}K(z)\psi(z)\,dz\neq 0.
\end{align}
Further, choose $\phi\in C_c^{\infty}(\{y\in\rn: |y|>3r_2\})$ such that
$$\int_{\rn}\phi(y)\,dy=\int_{\rn}\psi(y)\,dy.$$
Define
$$a_1(y)=\psi(-y)-\phi(y) \quad \text{for all} \ y\in\rn.$$
Then $a_1\in C_c^{\infty}(\rn)$, $\int_{\rn}a_1(y)\,dy=0$ and $a_1\not\equiv 0$.
Thus, there exists a finite ball $B$ such that $\operatorname{supp}a_1\subset B$ and
$\|\mathbf{1}_{B}\|_{L^{\varphi}(\rn)}$ is finite value. Take
$$a_{0}=\lambda a_1,$$
where $\lambda$ satisfies
$$0<|\lambda|\leq \frac{\|\mathbf{1}_B\|^{-1}_{L^{\varphi}(\rn)}}{\|a_1\|_{L^{q}_B}}<\infty.$$
So, $a_{0}$ is a nonzero $(\varphi, q, 0)$-atom.

Below, we verify that there exists a ball $U$ such that $U\subset\rn\setminus\operatorname{supp}a_{0}$ and $\mu_\Omega(a_{0})(x)>0$ for all $x\in U$.  Define
$$F_t(x):=\int_{|x-y|\leq t}K(x-y)a_{0}(y)\,dy \quad \text{for} \ t>0 \ \text{and} \ x\in\rn.$$
Denote $\tilde{\psi}(\cdot)=\psi(-\cdot)$ and $t_0=2r_2$.
If $y\in\operatorname{supp}\tilde{\psi}$, then $|y|\leq r_2<t_0$.
However, we see that $|y|>3r_2>t_0$ for all $y\in \operatorname{supp}\phi$.
By this and \eqref{Kz-neq0},  it follows that
\begin{align*}
F_{t_0}(0)=\int_{|y|\leq t_0}K(-y)a_{0}(y)\,dy =\lambda \int_{|y|\leq t_0}K(-y)\psi(-y)\,dy
=\lambda \int_{|z|\leq t_0}K(z)\psi(z)\,dz\neq0.
\end{align*}
Define
$$G(x)=\int_{\rn}K(x-y)\psi(-y)\,dy  \quad \text{for all} \ x\in\rn.$$
Due to $\operatorname{supp}\tilde{\psi}\subset\{y:r_1\leq|y|\leq r_2\}$, we obtain
that $|x-y|\geq |y|-|x|>r_1/2>0$ for all $|x|<r_1/2$.
Since $G(x)=K*\hat{\psi}(x)$, $\hat{\psi}=\psi(-y)\in C_c^{\infty}(\rn\setminus\{\vec{\mathbf{0}}\})$
and $K\in L^{1}_{loc}(\rn\setminus\{\vec{\mathbf{0}}\}$), we deduce that $G(x)\in C^{\infty}(\rn)$ and $G(x)$ is continuous on $B\lf(0, \frac{r_1}{4}\r)$.
Moreover, it follows from \eqref{Kz-neq0} that
$$G(0)=\int_{\rn}K(-y)\psi(-y)\,dy\neq0,$$
which implies that there exists $\eta>0$ such that
$$G(x)\neq0 \quad \text{for all} \  x\in B(0, \eta).$$
Take $\delta=\min\{r_1/4, \eta/2\}$ and $U=B(0, \delta)$. Then $U\cap\operatorname{supp}a_{0}=\emptyset$ and $t_1:=t_0+\delta=2r_2+\delta$.
For any $x\in U$ and any $t\in[t_0, t_1]$, we have
\begin{align*}
\begin{cases}
|x-y|\leq |x|+|y|<\delta+r_2\leq t_0\leq t &\text{as}\ y\in\operatorname{supp}\tilde{\psi};\\
|x-y|\geq |y|-|x|>3r_2-\delta>2r_2+\delta=t_1\geq t  &\text{as}\ y\in \operatorname{supp}\phi,
\end{cases}
\end{align*}
which implies that $\operatorname{supp}\tilde{\psi}\subset\{y\in\rn: \, |x-y|\leq t\}$
and $\operatorname{supp}\phi\cap\{y\in\rn: \, |x-y|\leq t\}=\emptyset$.
Therefore, we conclude that for any $x\in U\subset B(0, \eta)$ and any $t\in[t_0, t_1]$,
$$F_t(x)=\lambda\int_{|x-y|\leq t}K(x-y)\psi(-y)\,dy=\lambda\int_{\rn}K(x-y)\psi(-y)\,dy=\lambda G(x)\neq 0.$$
This allows us to derive
$$\mu_\Omega(a_{0})(x)\geq\left(\int_{t_0}^{t_1}|F_t(x)|^{2}\,\frac{dt}{t^3}\right)^{1/2}=|\lambda G(x)|\left(\int_{t_0}^{t_1}\,\frac{dt}{t^3}\right)^{1/2}>0.$$
This completes the proof of Lemma \ref{lemma4.4}.
\end{proof}

The following theorem presents a counterexample, which illustrates that the commutators corresponding to the Lusin area integral, g-function, Marcinkiewicz integral and Bochner Riesz maximal operator are not bounded from $H_b^{\varphi}(\rn)$ to $H^1(\rn)$.

\begin{theorem}\label{the-ex-4.1}
Let $\mathfrak{T}\in\{\mathbf{S},\, g,\, \mu_\Omega,\, T^{\delta}_{*}\}$. Assume that growth function $\varphi$ satisfies $\varphi(1)\neq 0$ and
\begin{align}\label{eq-var-max}
1>i(\varphi)>\max\lf\{\frac{n}{n+1}, \,\frac{n}{n+\epsilon}, \,\frac{2n}{n+1+2\delta_0}\r\},
\end{align}
where the definitions of $\epsilon$ and $\delta_0$ coincide with the corresponding ones in Proposition \ref{proposition-mar} and Proposition \ref{proposition-BR}.
 If $a$ is a bounded nonzero $(\varphi,q,0)$-atom and $U\subset\rn\setminus\operatorname{supp}a$ is a ball such that
$$\mathfrak{T}(a)(x)>0 \quad \text{for all} \ x\in U.$$
Then there exists $b\in \mathrm{BMO}_{\varphi}(\rn)$ such that $a\in H_{b}^{\varphi}(\rn)$ and $[b,\mathfrak{T}]a\notin H^{1}(\rn)$.
\end{theorem}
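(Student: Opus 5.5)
The approach mirrors Ky's counterexample for $H^1\times\mathrm{BMO}$. We choose $b$ to be a smooth bump adapted to $U$: fix $\eta\in C_c^\infty(U)$ with $\eta\ge 0$, $\eta\not\equiv 0$, and set $b:=\eta$. Then $b\in L^\infty$ with compact support. Since $\varphi$ is of upper type $1$ and $\varphi(1)\neq0$, we have $\|\mathbf 1_B\|_{L^\varphi(\rn)}\gtrsim \min\{|B|,1\}$-type lower bounds. Combining the trivial estimate $\int_B|b-b_B|\le 2\min\{\|b\|_\infty|B|,\|b\|_{L^1}\}$ with these bounds should give $b\in\mathrm{BMO}_\varphi(\rn)$; this is the first step, handling small and large balls separately.

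Next we show $a\in H^\varphi_b(\rn)$, i.e.\ $\mathfrak M((b-b(\cdot))a)\in L^1$. Because $\operatorname{supp}b\subset U$ is disjoint from $\operatorname{supp}a$, we have $b\,a\equiv0$, so $(b-b(x))a=-b(x)a$ and
\[
[b,\mathfrak M]a(x)=|b(x)|\,\mathfrak M a(x).
\]
Here $a$ is a bounded atom, so $\mathfrak Ma\in L^\infty$, and $b$ is compactly supported and bounded; hence this lies in $L^1$. Also $a\in H^\varphi$ because it is an atom.

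The main step is to show $[b,\mathfrak T]a\notin H^1$. For sublinear $\mathfrak T$ we have $[b,\mathfrak T]a(x)=\mathfrak T((b-b(x))a)=\mathfrak T(-b(x)a)=|b(x)|\,\mathfrak T a(x)$, using $ba=0$ and positive homogeneity (each $\mathfrak T$ in the list satisfies $\mathfrak T(\lambda f)=|\lambda|\mathfrak T f$). Thus $[b,\mathfrak T]a=b\,\mathfrak T(a)\ge0$, and it is strictly positive on the set where $\eta>0$, a nonempty open subset of $U$, because $\mathfrak T(a)>0$ on $U$ by hypothesis (supplied by Lemmas \ref{lemma4.3} and \ref{lemma4.4}). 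It is nonnegative, not identically zero, and integrable. Integrability follows from Propositions \ref{proposition-Larea-S}--\ref{proposition-BR}, since $\mathfrak T\in\mathcal K_\varphi$ under \eqref{eq-var-max}, and in fact it is bounded on compact $U$ since $\mathfrak Ta\in L^q$. Hence $\int_{\rn}[b,\mathfrak T]a\,dx>0$. Every $H^1(\rn)$ function has vanishing integral, so $[b,\mathfrak T]a\notin H^1(\rn)$.

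The anticipated obstacle is the first step: verifying $b\in\mathrm{BMO}_\varphi$ with only the hypotheses on $\varphi$. We would use lower type $\sigma$ and upper type $1$ to compare $\|\mathbf 1_B\|_{L^\varphi}$ with $1/\varphi^{-1}(1/|B|)$. For small balls we use the Lipschitz bound $\int_B|b-b_B|\lesssim r|B|$ and the estimate $|B|\lesssim\|\mathbf 1_B\|_{L^\varphi}$, which comes from upper type $1$. For large balls we use $\|\mathbf 1_B\|_{L^\varphi}\gtrsim 1$, which uses $\varphi(1)\ne0$ and monotonicity. A minor point is checking positive homogeneity for $T^\delta_*$ and $\mu_\Omega$, which is immediate from their definitions.
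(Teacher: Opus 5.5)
Your Steps 2 and 3 are correct and match the paper. You use $ba\equiv0$ and absolute homogeneity to get $[b,\mathfrak T]a=b\,\mathfrak T(a)$, then argue by vanishing integral. Your bound $\mathfrak M a\ls Ma\le\|a\|_{L^\infty}$ is a cleaner route to $a\in H^\varphi_b(\rn)$ than the paper's decay estimate. The large-ball part of Step 1 is also fine: for $r\ge1$, $\|\mathbf 1_B\|_{L^\varphi(\rn)}\ge\|\mathbf 1_{B(0,1)}\|_{L^\varphi(\rn)}>0$, together with $\int_B|b-b_B|\le2\|b\|_{L^1}$.

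The gap is Step 1 for small balls. The inequality $|B|\ls\|\mathbf 1_B\|_{L^\varphi(\rn)}$ is false, and so is your claimed lower bound $\|\mathbf 1_B\|_{L^\varphi(\rn)}\gs\min\{|B|,1\}$. Upper type $1$ gives the \emph{opposite} inequality, $\|\mathbf 1_{B(x,r)}\|_{L^\varphi(\rn)}\ls r^n\|\mathbf 1_{B(x,1)}\|_{L^\varphi(\rn)}$ for $r<1$. For example, $\varphi(t)=t^p$ with $p\in(n/(n+1),1)$ satisfies all the hypotheses, yet $\|\mathbf 1_B\|_{L^\varphi(\rn)}=|B|^{1/p}\ll|B|$ as $|B|\to0$. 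The correct lower bound comes from the lower type $\varrho<i(\varphi)$: $\|\mathbf 1_{B(x,r)}\|_{L^\varphi(\rn)}\gs r^{n/\varrho}\|\mathbf 1_{B(x,1)}\|_{L^\varphi(\rn)}$ for $0<r<1$. This is what the paper uses.

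With this bound, the $L^\infty$ estimate $\int_B|b-b_B|\le2\|b\|_{L^\infty}|B|$ is useless, since it leaves a factor $r^{n-n/\varrho}\to\infty$. You must use the Lipschitz estimate $\int_B|b-b_B|\ls r|B|$, which gives $r^{n+1-n/\varrho}$. That quantity is bounded on $(0,1)$ exactly because $\varrho>n/(n+1)$.

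So this is where the hypothesis $i(\varphi)>n/(n+1)$ enters, and your argument never uses it. That is a warning sign: for $\varphi(t)=t^p$, $\mathrm{BMO}_\varphi(\rn)$ is the Campanato space of order $n(1/p-1)$. When that order exceeds $1$, it contains no nonconstant compactly supported function, so no bump $b$ could work.

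A minor slip: $\mathfrak T(a)\in L^q$ does not make it bounded on $U$. Integrability is not needed anyway. A nonnegative function that is positive on an open set either is not in $L^1$ or has positive integral, and in both cases it is not in $H^1(\rn)$.
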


\begin{proof} The verification of this conclusion will be carried out in three distinct steps.

\medskip

{\it Step 1: If $\varphi$ satisfies \eqref{eq-var-max}, then there exists function $b$ such that $b\in\mathrm{BMO}_{\varphi}(\rn)$.}
Based on the condition that growth function $\varphi$ satisfies \eqref{eq-var-max},
we assume that $\varphi$ is of upper type $1$ and lower type $\varrho$ such that $$\max\lf\{\frac{n}{n+1},\,\frac{n}{n+\epsilon},\,\frac{2n}{n+1+2\delta_0}\r\}<\varrho<i(\varphi).$$
Then there exists constant $C_{\varrho}>1$ such that
for any $\lambda\in(0, 1)$,
$$\varphi(1)\leq C_{\varrho}\lambda^{\varrho}\varphi\lf(\frac{1}{\lambda}\r).$$
Therefore,
\begin{align}\label{eq-var-x0-1}
\varphi\lf(\frac{1}{\lambda_{0}}\r)|B(x_0,\, 1)|\geq \lambda_{0}^{-\varrho}\varphi(1)C^{-1}_{\varrho}|B(x_0,\, 1)|>1
\end{align}
with
$$\lambda_{0}=\min\lf\{\frac{1}{2}, \ \frac{\lf(\varphi(1)C^{-1}_{\varrho}|B(x_0,\, 1)|\r)^{1/\varrho}}{2}\r\}>0.$$
From \eqref{eq-var-x0-1} and the definition of $\|{\mathbf 1}_{B(x_0,\, 1)}\|_{L^{\varphi}(\rn)}$, it follows that
$$\|{\mathbf 1}_{B(x_0,\, 1)}\|_{L^{\varphi}(\rn)}>\lambda_{0}.$$
This, along with \cite[Lemma~2.4]{Z-F-C-2026}, yields
\begin{align}\label{eq.4.40-1-1}
\frac{1}{\|{\mathbf 1}_{B(x_0,\, r)}\|_{L^{\varphi}(\rn)}}
&\ls
\begin{cases}
\frac{1}{r^{n}\|{\mathbf 1}_{B(x_0,\, 1)}\|_{L^{\varphi}(\rn)}} &\text{as}\ r\in[1, \infty);\\
\frac{1}{r^{n/\varrho}\|{\mathbf 1}_{B(x_0,\, 1)}\|_{L^{\varphi}(\rn)}}  &\text{as}\ r\in(0, 1)
\end{cases}\\
&\ls
\begin{cases}
\frac{1}{r^{n}\lambda_{0}} &\text{as}\ r\in[1, \infty);\\
\frac{1}{r^{n/\varrho}\lambda_{0}}  &\text{as}\ r\in(0, 1).
\end{cases}\notag
\end{align}

Choose $0\leq b\in C^{\infty}_{c}(U)$ satisfying $b\not\equiv 0$, where $U$ is a ball.
Fix a ball $B=B(x_0, r)$, if $0<r\leq 1$, then we apply \eqref{eq.4.40-1-1} and
$$\varrho>\max\lf\{\frac{n}{n+1},\,\frac{n}{n+\epsilon},\,\frac{2n}{n+1+2\delta_0}\r\}>\frac{n}{n+1}$$
to  derive
\begin{align*}
\frac{1}{\|\mathbf{1}_{B}\|_{L^{\varphi}(\rn)}}\int_B|b(x)-b_B|\,dx
&= \frac{1}{\|\mathbf{1}_{B}\|_{L^{\varphi}(\rn)}}\int_B\lf|b(x)-\frac{1}{|B|}
\int_{B}b(y)\,dy\r|\,dx \\
&\leq \frac{1}{r^{n/\varrho}\lambda_{0}}\frac{1}{|B|}\int_{B\cap U}\int_{B\cap U}|b(x)-b(y)|\,dy\,dx\\
&\leq \frac{1}{r^{n/\varrho}\lambda_{0}}\frac{L}{|B|}\int_B\int_{B}|x-y|\,dy\,dx\\
&\leq \frac{1}{r^{n/\varrho}\lambda_{0}}2Lr|B|\ls r^{n+1-n/\varrho}L\ls L,
\end{align*}
where the three step used $\operatorname{supp}b\subset U$ and $b\in C^{\infty}_{c}(U)$, that is,
$$|b(x)-b(y)|\leq L|x-y| \quad \text{for} \ x, y\in U.$$
If $r>1$, by \eqref{eq.4.40-1-1} and
$$|b(x)-b_B|\leq |b(x)|+|b_B|\leq |b(x)|+\frac{1}{|B|}\int_{B}|b(y)|\,dy,$$
we obtain 
\begin{align*}
\frac{1}{\|\mathbf{1}_{B}\|_{L^{\varphi}(\rn)}}\int_B|b(x)-b_B|\,dx
\leq
\frac{2}{\|\mathbf{1}_{B}\|_{L^{\varphi}(\rn)}}\int_B|b(x)|\,dx
\ls
\frac{2\|b\|_{L^1(\rn)}}{r^{n}\lambda_{0}}
<\infty.
\end{align*}
Combining the discussions above, we conclude that $b\in \mathrm{BMO}_{\varphi}(\rn)$.
\medskip

{\it Step 2: Verifying $a\in H_b^{\varphi}(\rn)$.}
Since $\operatorname{supp}b\subset U$ and $U\cap\operatorname{supp}a=\emptyset$, we have
\begin{align*}
b(y)=0 \quad \text{for all} \ y\in\operatorname{supp}a,
\end{align*}
where implies
\begin{align*}
[b,\mathfrak{M}]a(x)=\mathfrak{M}((b(x)-b(\cdot))a(\cdot))(x)=|b(x)|\mathfrak{M}a(x).
\end{align*}
According to the fact that $\operatorname{supp}b$ is compact, we deduce that
$|x-y|>0$ for all $x\in \operatorname{supp}b$ and $y\in\operatorname{supp}a$.
If we claim
\begin{align}\label{G-xyt}
G(x, y, t):=t^{-n}\left(1+|x-y|/t\right)^{-(n+1)}\leq |x-y|^{-n} \quad \text{for all} \ x\in \operatorname{supp}b, \, y\in\operatorname{supp}a, \, t>0,
\end{align}
then,  by the definition of $\mathcal{S}_{0}(\rn)$, we see that for any $\psi\in\mathcal{S}_{0}(\rn)$,
\begin{align*}
|\psi_{t}*a(x)|
&\leq \int_{\operatorname{supp}a}|\psi_{t}(x-y)||a(y)|\,dy\\
&\ls \|a\|_{L^1(\rn)}t^{-n}\left(1+|x-y|/t\right)^{-(n+1)}\\
&\ls \|a\|_{L^1(\rn)}|x-y|^{-n}.
\end{align*}
This implies that $a\in H_b^{\varphi}(\rn)$.

It remains to show \eqref{G-xyt}. 
If $0<t\leq |x-y|$, then
\begin{align*}
G(x, y, t)
\leq t^{-n}(|x-y|/t)^{-(n+1)}=t|x-y|^{-(n+1)}\leq |x-y|^{-n}.
\end{align*}
If $t>|x-y|$, then
\begin{align*}
G(x, y, t)
\leq t^{-n}\leq |x-y|^{-n}.
\end{align*}
Thus, \eqref{G-xyt} holds.

\medskip
{\it Step 3: Verifying $[b,\mathfrak{T}]a\notin H^{1}(\rn)$.}
If $[b,\mathfrak{T}]a\in H^1(\rn)$, then the vanishing property of $H^1(\rn)$ allows us obtain
\begin{align}\label{var-0-0}
\int_{\rn}[b,\mathfrak{T}]a(x)\,dx=0.
\end{align}
From the discussion of {\it Step 2},  it follows that $b(y)=0$ for $y\in\operatorname{supp}a$,
which implies
$$[b,\mathfrak{T}]a(x)=b(x)\mathfrak{T}(a)(x).$$
This, along with $0\leq b(x)\not\equiv 0$ and
$$\mathfrak{T}(a)(x)>0 \quad \text{for all} \ x\in U,$$
yields
$$\int_{\rn}[b,\mathfrak{T}]a(x)\,dx=\int_{\rn}b(x)\mathfrak{T}(a)(x)\,dx>0,$$
which contradicts \eqref{var-0-0}, so the hypothesis is invalid, and hence
$$[b,\mathfrak{T}]a\notin H^1(\rn).$$
We finish the proof of Theorem \ref{the-ex-4.1}.
\end{proof}

\end{document}